\algnewcommand{\algorithmicgoto}{\textbf{go to}}%
\algnewcommand{\Goto}[1]{\algorithmicgoto~\ref{#1}}%
\algnewcommand{\LineComment}[1]{\Statex \(\triangleright\) #1}
\algnewcommand{\LineCommentN}[1]{\Statex \hspace{1cm}\(\triangleright\) #1}
\newtheoremstyle{mystyle}
  {}
  {}
  {\itshape}
  {}
  {\bfseries}
  {.}
  { }
  {\thmname{#1}\thmnumber{ #2}\thmnote{ (#3)}}
\DeclareMathOperator*{\opt}{opt}
\theoremstyle{mystyle}
\newtheorem{thm}{Theorem}
\newtheorem{defn}{Definition}
\newtheorem{lem}{Lemma}
\newtheorem{cor}{Corollary}
\newtheorem{prop}{Proposition}
\newtheorem{exm}{Example}
\newtheorem{rem}{Remark}
\newtheorem{assumption}{Assumption}
\newtheorem{problem}{Problem}
\newcommand{\one}{1}
\definecolor{officegreen}{rgb}{0.0, 0.5, 0.0}
\newcommand{\moh}[1]{{\color{black} #1}}
\newcommand{\sy}[1]{{\color{black} #1}}
\newcommand{\syo}[1]{{\color{black} #1}}
\newcommand{\mo}[1]{{\color{black} #1}}
\begin{document}

%
\title{Tight Remainder-Form Decomposition Functions \\ with Applications to Constrained Reachability and Interval Observer Design}
%
%
%

\author{Mohammad~Khajenejad,~\IEEEmembership{Student Member,~IEEE,}
        and~Sze~Zheng~Yong,~\IEEEmembership{Member,~IEEE}
\thanks{M. Khajenejad and S.Z. Yong are with the School for Engineering of Matter, Transport and Energy, Arizona State University, Tempe, AZ, USA (e-mail: \{mkhajene, szyong\}@asu.edu).}}

\maketitle

\begin{abstract}
\sy{This paper proposes} a tractable family of remainder-\sy{form}  mixed-monotone decomposition functions \sy{that are useful for over-approximating the image set of nonlinear mappings in reachability and estimation problems. In particular, our approach applies to} 
\sy{a new} class \sy{of nonsmooth nonlinear systems that we call either-sided locally Lipschitz (ELLC) systems, which we show}  
to be a superset of locally Lipschitz continuous systems, \sy{thus expanding the set of systems that are formally known to be  mixed-monotone}. 
\sy{In addition, we derive} 
lower and upper bounds for the over-approximation error \sy{and show that} 
the lower bound is achievable \sy{with our proposed approach}.
 Moreover, we develop a set inversion algorithm that along with the proposed decomposition functions, can be used for constrained reachability analysis and interval observer design for continuous and discrete-time systems with bounded noise. 
\end{abstract}


\IEEEpeerreviewmaketitle

\section{Introduction}
\IEEEPARstart{M}onotonicity \sy{properties of systems have proven to be very powerful and useful for analyzing and controlling complex systems \cite{hirsch2006monotone,angeli2003monotone}. Building upon this idea, it \syo{was} 
shown that certain nonmonotone systems can be lifted to higher dimensional monotone systems that can be potentially used to deduce critical information about the original system  (see e.g., \cite{kulenovic2006global,enciso2006nonmonotone,gouze1994monotone}) by decomposing the system dynamics into increasing and decreasing components. Systems that are decomposable in this manner are called \emph{ mixed-monotone} and  are significantly more general than the class of monotone systems.}

\sy{Furthermore,  mixed-monotonicity has also proven to be very beneficial for system analysis and control.}
For instance, if  mixed-monotonicity holds, it can be concluded that the original system has global asymptotic stability by proving the nonexistence of equilibria of the 
\sy{lifted} system, except in a certain lower-dimensional subspace \cite{smith2008global,chu1998mixed}. 
\sy{Moreover,} 
forward invariant and attractive sets of the original system
can be identified \cite{abate2020computing} 
and reachable sets of the original system can be efficiently \sy{over-}approximated 
\sy{and used for} state estimation and abstraction-based control synthesis \cite{abate2020computing,coogan2016stability,coogan2015efficient,yang2019fuel}. 
\sy{However, the usefulness of these system lifting techniques for analysis and control is highly dependent on the \emph{tightness} of the decomposition approaches, their \emph{computational tractability} \cite{yang2019tight,abate2020tight} and their applicability to a broad class of systems; therefore, the capability to compute or construct \emph{tractable} and \emph{tight}  mixed-monotone decomposition functions for a broad range of nonlinear, uncertain and constrained systems is of great interest and will have a significant impact.}
\emph{Literature review}. \sy{\syo{M}ixed-monotone decomposition functions are generally \syo{not unique,} 
hence several seminal studies have addressed the issue of identifying and/or constructing appropriate decomposition functions with somewhat different yet highly related definitions and corresponding sufficient conditions for  mixed-monotonicity \cite{yang2019tight,yang2019sufficient,abate2020tight,meyer2019tira,meyer2019hierarchical,coogan2015efficient,coogan2016mixed,chu1998mixed}. In particular,} 
recent studies in \cite{yang2019tight,abate2020tight} 
provided \emph{tight} decomposition functions for unconstrained discrete\sy{-time} and continuous-time dynamical systems, respectively, \sy{whose \emph{computability}} 
rely on \moh{the} global solvability of nonlinear optimization programs, which is \sy{only} guaranteed in \syo{some} specific cases such as when the vector field is \emph{Jacobian sign-stable}, or when all \syo{its} 
\emph{\syo{stationary/}critical 
points} 
can be \syo{computed analytically.} 
On the other hand, 
\sy{an} interesting study in \cite{yang2019sufficient} 
proposed computable and constructive (but not necessarily tight) decomposition functions for differentiable vector fields with 
known bounds for the derivatives. \sy{Building on these frameworks, we aim to obtain computable/tractable and tighter decomposition functions for a broad class of nonsmooth systems.} 

\sy{Another relevant body of literature pertains to \emph{interval arithmetic} \cite{jaulinapplied,cornelius1984computing,moore2009introduction,alefeld2000interval,kieffer2002guaranteed}, which \syo{has} 
been successfully applied to problems in numerical analysis, set estimation, motion planning, etc. Specifically, \emph{inclusion functions} and variations thereof (e.g., natural,
centered-form and mixed-form inclusions) that are based on interval arithmetic can be directly related to decomposition functions and can similarly be used for over-approximation of the ranges/image sets of functions and for state observer designs \cite{yang2020accurate,alamo2005guaranteed,alamo2008set,scott2013bounds,shen2017rapid,kieffer2002guaranteed}. In addition, various properties of inclusion functions have been studied such as their convergence rates and the \emph{subdivision principle} \cite{cornelius1984computing}. Subsequent studies further introduced refinements of interval over-approximations and set inversion algorithms that can incorporate} 
%
new sources of information about the system such as state constraints, measurements/observations, manufactured redundant variables, second-order derivatives, etc., e.g., in \cite{jaulinapplied,yang2020accurate,alamo2005guaranteed,alamo2008set,scott2013bounds}, \sy{which can be also beneficial for decomposition function-based constrained reachability and set-valued estimation problems that we consider in this \mo{work}.}

\emph{Contribution.} \sy{In this paper, we introduce a class of  mixed-monotone decomposition functions whose construction is both computationally tractable and tight for a broad range of discrete-time and continuous-time nonsmooth nonlinear systems. The proposed  mixed-monotone decomposition functions are in the \emph{remainder form} (based on the terminology in \cite{cornelius1984computing}) and we also show that they result in \emph{difference of monotone functions}, which bear some resemblance with difference of convex functions in DC programming that is also widely used in optimization and state estimation problems, e.g., \cite{alamo2008set,horst1999dc}.

Our work contributes to the literature on  mixed-monotone decomposition functions and more generally, inclusion functions in multiple ways:
\begin{enumerate}[1)]
\item First, our proposed remainder-form decomposition functions apply to a new broader class of nonsmooth nonlinear systems, which we call \emph{either-sided locally Lipschitz continuous} (ELLC) systems, \mo{that} is proven in the paper to be a superset of locally Lipschitz continuous (LLC) systems. This new system class relaxes the (almost everywhere) smoothness and bounded gradient requirements and allows nonsmooth vector fields with only one-sided bounded \emph{Clarke 
Jacobians}. 
\item Our construction approach for remainder-form decomposition functions  is tractable (i.e., computable in closed-form with a non-iterative algorithm) 
and proven to be the tightest for this family of decomposition functions, which we also show to include \cite{yang2019sufficient}, thus generalizing and improving on the method in \cite{yang2019sufficient}.
\item  We further show that the over-approximation of the image sets of ELLC systems converges to the true tightest enclosing interval at least linearly, when the interval domain width goes to zero and \syo{moreover, a} 
\emph{subdivision principle} 
\syo{applies for improving} the enclosure of the range/image set.
\item We introduce a novel set inversion algorithm based on  mixed-monotone decomposition functions as an alternative to SIVIA \cite{jaulinapplied} and the refinement algorithm in \cite{yang2020accurate}, which enables the design of algorithms for constrained reachability and interval observers for systems with known constraints, modeling redundancy and/or  sensor measurements.
\end{enumerate}
Moreover, it is noteworthy that the inclusion function based on  mixed-monotone decomposition functions can be used alongside any existing inclusion functions, where 
 the ``best of them" \syo{(by virtue of an intersection property)} is chosen, 
since we observed that \mo{in general,} no single inclusion function consistently outperforms all others. Further\syo{more}, our proposed inclusion function can be directly and simply integrated in\syo{to} existing set inversion, constrained reachability and interval observer algorithms, e.g., \cite{jaulinapplied,yang2020accurate,khajenejad2020interval}, without much modifications. Finally, to demonstrate the effectiveness of the proposed algorithms for decomposition function construction and set inversion, we compare them with existing inclusion/decomposition functions in the literature. 
} 
\section{Background and Problem Formulation}
\subsection{Notation}
$\mathbb{N}$, \sy{$\mathbb{N}_a$,} $\mathbb{R}^{n_z}$ and $\mathbb{R}^{n \times m}$ 
denote the set of positive integers, \sy{the first $a$ positive integers,} the $n_z$-dimensional Euclidean space and the space of $n$ by $m$ real matrices, respectively. Moreover, $\forall z,\underline{z},\overline{z} \in \mathbb{R}^{n_z}$, $\underline{z} \leq \overline{z} \Leftrightarrow \underline{z}_i \leq \overline{z}_i ,\forall i \in \sy{\mathbb{N}_{n_z}}$, 
where ${z}_i$ denotes the $i$-th \sy{element of $z$.} 
Further, $\sy{\mathbb{I}}\mathcal{Z}\triangleq[\underline{z},\overline{z}] \triangleq \mo{\{}z\in \mathbb{R}^{n_z}| \underline{z} \leq z \leq \overline{z}\mo{\}}$ 
and \sy{$d(\mathbb{I}\mathcal{Z}) \triangleq \|\overline{z}-\underline{z}\|_{\infty}$} are called 
a closed interval/hyperrectangle in $\mathbb{R}^{n_z}$ and the diameter of $\sy{\mathbb{I}}\mathcal{Z}$, 
\sy{respectively}, where $\|z\|_{\infty} \triangleq \max\limits_i|z_i|$ denotes the $\ell_{\infty}$-norm of $z \in \mathbb{R}^{n_z}$. 
The set of all intervals in $\mathbb{R}^{n_z}$ is denoted by $\mathbb{IR}^{n_z}$. 

\subsection{Definitions and Preliminaries}
\sy{First,} we briefly introduce some of the main concepts that we use through\sy{out} the paper, as well as some important existing results that will be used  \sy{for comparisons and for deriving} 
our main results. 
We start by introducing inclusion \sy{and decomposition} functions and some of their 
\sy{typical} instances. 

\subsubsection{\sy{Inclusion and Decomposition Functions}}
\begin{defn}[Inclusion Functions]\cite[Chapter 2.4]{jaulinapplied} \label{def:inc_func}
Consider a function $f:\mathcal{Z} \subset \mathbb{R}^{n_z} \to \mathbb{R}^{n_x}$. The interval function $T^f:\mathbb{IR}^{n_z} \to \mathbb{IR}^{n_x}$ is an inclusion function for $f(\cdot)$, if 
\begin{align*}
\forall \sy{\mathbb{I}}\mathcal{Z} \in \mathbb{IR}^{n_z}, f(\sy{\mathbb{I}}\mathcal{Z}) \subset T^f(\sy{\mathbb{I}}\mathcal{Z}), 
\end{align*}
where $f(\sy{\mathbb{I}}\mathcal{Z})$ is the true image set \sy{(or range)} of $f(\cdot)$ 
\sy{for the domain} $\sy{\mathbb{I}}\mathcal{Z} \in \mathbb{IR}^{n_z}$. \sy{The tightest enclosing interval of $f(\sy{\mathbb{I}}\mathcal{Z})$ is denoted by $T^f_O(\mathbb{I}\mathcal{Z}) \triangleq [\underline{f}^{\text{true}} ,\overline{f}^{\text{true}} ] \mo{\triangleq [ \min\limits_{z \in \mathbb{I}\mathcal{Z}} f(z),  \max\limits_{z \in \mathbb{I}\mathcal{Z}} f(z)]} \supset f(\sy{\mathbb{I}}\mathcal{Z})$; hence, it is the tightest inclusion function, i.e., $T^f_O(\mathbb{I}\mathcal{Z}) \subseteq {{T}}^f(\sy{\mathbb{I}}\mathcal{Z})$. Further, with a slight abuse of notation, we overload the notation of $f(\mathbb{I}\mathcal{Z})$ and $T^f(\sy{\mathbb{I}}\mathcal{Z})$ 
when \eqref{eq:mix_mon_def} is a continuous-time system to represent $f_c(\mathbb{I}\mathcal{Z})\subset \mathbb{R}^{n_x}$ and $T^{f_c}(\sy{\mathbb{I}}\mathcal{Z}) \in \mathbb{I}\mathbb{R}^{n_x}$, respectively, with $ f_{c,i}(\mathbb{I}\mathcal{Z})\triangleq f_{i}(\mathbb{I}\mathcal{Z}_{c,i})$, $ T^{f_c}_i(\mathbb{I}\mathcal{Z})\triangleq T^{f}_i(\mathbb{I}\mathcal{Z}_{c,i})$ and $\mathbb{I}\mathcal{Z}_{c,i}\triangleq \{\syo{[\underline{z},\overline{z}] 
\in \mathbb{I}\mathcal{Z}} \, | \, [\underline{z}_j,\overline{z}_j] =\mathbb{I}\mathcal{Z}_j, \forall j \neq i, \underline{z}_i=\overline{z}_i=x_i\}$, \syo{$\forall i \in \mathbb{N}_{n_x}$}.}
\end{defn}
\begin{prop}[Natural ($T^f_N$) Inclusion Functions] \cite[Theorem 2.2]{jaulinapplied}\label{prop:natural}
Consider $\sy{\mathbb{I}}\mathcal{Z} \triangleq [\underline{z},\overline{z}] \in \mathbb{IR}^{n_z}$ and $f\triangleq [f_1,\dots, f_{n_x}]^\top:
\sy{\mathcal{Z} \subset \mathbb{R}^{n_z}} \to \mathbb{R}^{n_x}$, where each $f_j, \ j\in \sy{\mathbb{N}_{n_x}}$, \syo{is} 
expressed as a finite composition of the operators $+,-,\times,/$ and elementary functions (sine, cosine, exponential, square root, \sy{etc.).} 
A natural inclusion function $T^f_N:\mathbb{IR}^{n_z} \to \sy{\mathbb{I}}\mathbb{R}^{n_x}$ for $f(\cdot)$ is obtained by replacing each real variable $z_i,i\in \sy{\mathbb{N}_{n_z}}$, 
by its corresponding \emph{interval variable} $[z_i] \triangleq \sy{\mathbb{I}}\mathcal{Z}_i=[\underline{z}_i,\overline{z}_i]$, and each operator or function by its interval counterpart 
by applying \emph{interval arithmetic} (cf. \cite[Chapter 2]{jaulinapplied} \mo{for details}). 
\end{prop}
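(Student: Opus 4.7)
The plan is to prove this by structural induction on the syntactic expression tree representing each component function $f_j$. The base cases consist of leaves that are either real variables $z_i$, which are trivially enclosed by $[z_i] = \mathbb{I}\mathcal{Z}_i$ since $z_i \in \mathbb{I}\mathcal{Z}_i$ for every $z \in \mathbb{I}\mathcal{Z}$, or constants $c$, which are enclosed by the degenerate interval $[c,c]$. The induction hypothesis is that for every subexpression $g$ occurring at a node of the tree, its interval evaluation $[g](\mathbb{I}\mathcal{Z})$ obtained by substituting interval variables and applying interval arithmetic rules satisfies $\{g(z) : z \in \mathbb{I}\mathcal{Z}\} \subseteq [g](\mathbb{I}\mathcal{Z})$.

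The inductive step rests on the elementary inclusion property of interval arithmetic, which must be verified case-by-case for the admissible operations. For each binary operator $\circ \in \{+,-,\times,/\}$ and any intervals $[a],[b]$ (with $0 \notin [b]$ in the division case), the interval-arithmetic counterpart $\circ_{\mathbb{I}}$ satisfies $\{a \circ b : a \in [a], b \in [b]\} \subseteq [a]\circ_{\mathbb{I}}[b]$; this follows directly from continuity of $\circ$ and the definitions $[a]+_{\mathbb{I}}[b]=[\underline a+\underline b,\overline a+\overline b]$, $[a]\times_{\mathbb{I}}[b]=[\min S,\max S]$ with $S=\{\underline a\underline b,\underline a\overline b,\overline a\underline b,\overline a\overline b\}$, etc. The same enclosure property holds for each elementary unary operator $\phi \in \{\sin,\cos,\exp,\sqrt{\cdot},\ldots\}$ whose interval extension $\phi_{\mathbb{I}}$ is constructed by exploiting monotonicity (e.g., on the monotone branches of $\sin$ and $\cos$) so that $\{\phi(a) : a \in [a]\} \subseteq \phi_{\mathbb{I}}([a])$. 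Combining the induction hypothesis with the enclosure property of the outermost operator at each internal node propagates the inclusion up the tree, until at the root one obtains $f_j(\mathbb{I}\mathcal{Z}) \subseteq T^{f_j}_N(\mathbb{I}\mathcal{Z})$ for each $j \in \mathbb{N}_{n_x}$. Stacking these component-wise yields $f(\mathbb{I}\mathcal{Z}) \subseteq T^f_N(\mathbb{I}\mathcal{Z})$, which is exactly the inclusion condition of Definition 1.

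The main obstacle is not conceptual but notational: cleanly formalizing what ``finite composition'' means as a well-founded expression tree, and handling the partial operators (division by intervals containing zero, square root of intervals that dip below zero, etc.) so that the interval evaluation is well-defined on every subexpression evaluated over $\mathbb{I}\mathcal{Z}$. This is typically absorbed into the standing assumption that $f$ is well-defined on $\mathcal{Z}$ together with $\mathbb{I}\mathcal{Z} \subseteq \mathcal{Z}$ and that the interval extensions used are the standard ones. Once this bookkeeping is in place, the induction is a routine verification and the proposition follows from the finiteness of the composition, which guarantees the induction terminates at the root in finitely many steps.
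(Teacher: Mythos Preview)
Your structural induction on the expression tree is the standard and correct argument for this result. Note, however, that the paper does not supply its own proof of this proposition: it is stated as a citation to \cite[Theorem 2.2]{jaulinapplied} and used as background, so there is no in-paper proof to compare against. Your approach is essentially the one found in the cited reference.
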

\begin{prop}[Centered ($T^f_C$) and Mixed Centered ($T^f_M$) Inclusion Functions]\cite[Sections 2.4.3--2.4.4]{jaulinapplied}\label{prop:natural_inclusions}
Let $f\triangleq [f_1,\dots, f_{n_x}]^\top:
\sy{\mathcal{Z} \subset \mathbb{R}^{n_z}} \to \mathbb{R}^{n_x}$ be differentiable over the 
\sy{interval} $\sy{\mathbb{I}}\mathcal{Z} \triangleq [\underline{z},\overline{z}] \in \mathbb{IR}^{n_z}$. Then, the interval function 
\begin{align*}
T^f_C(\sy{\mathbb{I}}\mathcal{Z})\triangleq f(m)+
\sy{\mathbb{I}\mathbf{J}^f_{\mathbb{I}\mathcal{Z}}}(\sy{\mathbb{I}}\mathcal{Z}-m),
\end{align*}
is an inclusion function for $f(\cdot)$, \sy{called the \emph{centered inclusion function} for $f(\cdot)$ in $\sy{\mathbb{I}}\mathcal{Z}$}, where $m \triangleq \frac{\underline{z}+\overline{z}}{2}$, 
$\sy{\mathbb{I}\mathbf{J}^f_{\mathbb{I}\mathcal{Z}}}$ is an \emph{interval \sy{Jacobian} matrix} \sy{with domain $\mathbb{I}\mathcal{Z}$}, such that $J_f(z) \in \sy{\mathbb{I}\mathbf{J}^f_{\mathbb{I}\mathcal{Z}}}, \forall z \in \sy{\mathbb{I}}\mathcal{Z}$ and $J_f(z)$ is the Jacobian matrix of $f(z)$ at point $z \in \sy{\mathbb{I}}\mathcal{Z}$. 
Moreover,
\begin{align*}
&T^f_M(\sy{\mathbb{I}}\mathcal{Z}) \triangleq [T^f_{M,i}(\sy{\mathbb{I}}\mathcal{Z}) \dots T^f_{M,n_x}(\sy{\mathbb{I}}\mathcal{Z})]^\top,
\end{align*}
{with} 
$T^f_{M,i}(\sy{\mathbb{I}}\mathcal{Z})\triangleq f_i(m)+\sum_{j=1}^{n_z} \sy{(\mathbb{I}}\mathbf{J}^{f}_{\sy{\mathbb{I}}\mathcal{Z}_{1 \to j}})_{i,j}(\sy{\mathbb{I}\mathcal{Z}_j}-m_j)$ for all $ i \in \sy{\mathbb{N}_{n_x}}$, 
 is also an inclusion function for $f(\cdot)$, called the \emph{mixed-centered inclusion function}, where $\sy{\mathbb{I}}\mathcal{Z}_j \triangleq [\underline{z}_j \ \overline{z}_j ], j\in \sy{\mathbb{N}_{n_z}}$, 
 $\sy{\mathbb{I}}\mathcal{Z}_{1 \to j} \triangleq [\sy{\mathbb{I}}\mathcal{Z}_1 \dots \sy{\mathbb{I}}\mathcal{Z}_j \ m_{j+1} \dots m_{n_z}]^\top$, $(\sy{\mathbb{I}}\mathbf{J}^{f}_{\sy{\mathbb{I}}\mathcal{Z}_{1 \to j}})_{i,j}$ is the $(i,j)$-th element of the interval \sy{Jacobian} matrix $\sy{\mathbb{I}}\mathbf{J}^{f}_{\sy{\mathbb{I}}\mathcal{Z}_{1 \to j}}$ \sy{with domain $\mathbb{I}\mathcal{Z}_{1 \to j}$}, and $\underline{z}_j,\overline{z}_j,m_j$ \mo{are} the $j$-th elements of the vectors $\underline{z},\overline{z},m$, respectively.   
\end{prop}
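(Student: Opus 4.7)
The plan is to establish both inclusion properties via the mean value theorem, treating the centered form as the single-vector-step case and the mixed-centered form as its telescoping, coordinate-by-coordinate refinement. I would first fix an arbitrary $z \in \mathbb{I}\mathcal{Z}$ and write $f_i(z) - f_i(m)$ in terms of gradients, then show that the resulting products of gradient evaluations and displacements $(z - m)$ are captured by the advertised interval expressions. Since each $z \in \mathbb{I}\mathcal{Z}$ yields a value that lies in the claimed interval, the image set $f(\mathbb{I}\mathcal{Z})$ is contained in it, which is exactly the definition of an inclusion function in Definition~1.

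For the centered form $T^f_C$, I would apply the vector mean value theorem componentwise: for each $i \in \mathbb{N}_{n_x}$, there exists $\xi_i$ on the segment joining $m$ to $z$ (hence in $\mathbb{I}\mathcal{Z}$ by convexity) such that $f_i(z) = f_i(m) + \nabla f_i(\xi_i)^\top (z-m)$. Stacking over $i$, one obtains a matrix $J(\xi) \in \mathbb{I}\mathbf{J}^f_{\mathbb{I}\mathcal{Z}}$ (each row evaluated at possibly different $\xi_i$'s, but each row still lies in the interval bound on the Jacobian over $\mathbb{I}\mathcal{Z}$) with $f(z) = f(m) + J(\xi)(z-m)$. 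Since $(z-m) \in \mathbb{I}\mathcal{Z} - m$ and $J(\xi) \in \mathbb{I}\mathbf{J}^f_{\mathbb{I}\mathcal{Z}}$, applying interval arithmetic to the product yields $f(z) \in T^f_C(\mathbb{I}\mathcal{Z})$.

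For the mixed-centered form $T^f_M$, I would use a telescoping decomposition coordinate by coordinate. Writing $z^{(j)} \triangleq [z_1,\dots,z_j,m_{j+1},\dots,m_{n_z}]^\top$ so that $z^{(0)} = m$ and $z^{(n_z)} = z$, I can split
\begin{align*}
f_i(z) - f_i(m) = \sum_{j=1}^{n_z} \bigl( f_i(z^{(j)}) - f_i(z^{(j-1)}) \bigr).
\end{align*}
Each summand differs only in the $j$-th coordinate, so the single-variable mean value theorem provides $\xi_{i,j}$ lying on the segment between $z^{(j-1)}$ and $z^{(j)}$ with $f_i(z^{(j)}) - f_i(z^{(j-1)}) = \tfrac{\partial f_i}{\partial z_j}(\xi_{i,j})(z_j - m_j)$. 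The key observation is that this $\xi_{i,j}$ belongs exactly to $\mathbb{I}\mathcal{Z}_{1\to j}$ (its first $j-1$ entries lie in the corresponding intervals, its $j$-th entry lies between $m_j$ and $z_j \in \mathbb{I}\mathcal{Z}_j$, and its remaining entries equal $m_{j+1},\dots,m_{n_z}$), so $\tfrac{\partial f_i}{\partial z_j}(\xi_{i,j}) \in (\mathbb{I}\mathbf{J}^f_{\mathbb{I}\mathcal{Z}_{1\to j}})_{i,j}$. Summing over $j$ and using interval arithmetic yields $f_i(z) \in T^f_{M,i}(\mathbb{I}\mathcal{Z})$, and ranging $i$ over $\mathbb{N}_{n_x}$ finishes the proof.

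The main technical obstacle is the bookkeeping in the mixed-centered case, specifically verifying that $\xi_{i,j}$ produced by the univariate mean value theorem in the $j$-th coordinate really sits in $\mathbb{I}\mathcal{Z}_{1\to j}$ as defined. This needs a careful argument that $[\min(m_j,z_j),\max(m_j,z_j)] \subseteq \mathbb{I}\mathcal{Z}_j$, which follows because both $m_j$ and $z_j$ lie in $\mathbb{I}\mathcal{Z}_j$ and this is a convex interval. Everything else is a routine application of interval arithmetic inclusion monotonicity, which is already codified in Proposition~1 and the general interval calculus cited from \cite{jaulinapplied}.
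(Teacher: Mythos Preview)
The paper does not supply its own proof of this proposition; it is stated as a background result imported directly from \cite[Sections 2.4.3--2.4.4]{jaulinapplied}. Your mean-value-theorem argument (componentwise for $T^f_C$, and the coordinate-by-coordinate telescoping for $T^f_M$ with the verification that $\xi_{i,j}\in\mathbb{I}\mathcal{Z}_{1\to j}$) is correct and is exactly the standard derivation one finds in that reference.
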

Next, inspired by the work in \cite[Section 3]{cornelius1984computing}, we introduce the notion of remainder-form (additive) inclusion functions.
\begin{defn}[Remainder-Form (Additive) Inclusion Functions] \label{defn:remainder_dec}
Consider a function $f:\sy{\mathcal{Z}\in\mathbb{R}^{n_z}} \to \mathbb{R}^{n_x}$. The interval function $T^f_R:\mathbb{IR}^{n_z} \to \mathbb{IR}^{n_x}$ is an additive (remainder-form) inclusion function for $f(\cdot)$, if there exist two constituent mappings $g,h:\mathbb{R}^{n_z} \to \mathbb{R}^{n_x}$, such that for any $\sy{\mathbb{I}}\mathcal{Z} \in \mathbb{IR}^{n_z}$:
\begin{align*} 
f(\sy{\mathbb{I}}\mathcal{Z})\subseteq T^f_R(\sy{\mathbb{I}}\mathcal{Z}) \triangleq g(\sy{\mathbb{I}}\mathcal{Z})+h(\sy{\mathbb{I}}\mathcal{Z}),
\end{align*}
where addition is based on interval arithmetic \sy{(cf. \cite{jaulinapplied})}.
\end{defn}
\begin{defn}[Mixed-Monotonicity \sy{and} Decomposition Functions] \cite[Definition 1]{abate2020tight},\cite[Definition 4]{yang2019sufficient} \label{defn:dec_func}
\syo{Consider the dynamical system \sy{with initial state $x_0 \in \mathbb{I}\mathcal{X}_0 \triangleq [\underline{x}_0,\overline{x}_0]$:}
\begin{align}\label{eq:mix_mon_def}
x_t^+=\tilde{f}(x_t,u_t,w_t)\triangleq f(z_t),
\end{align}
where $x_t^+ \triangleq x_{t+1}$ if \eqref{eq:mix_mon_def} is a discrete-time \sy{system} and $x_t^+ \triangleq \dot{x}_t$ if \eqref{eq:mix_mon_def} is a continuous-time system, $\tilde{f}:\mathcal{X}\times \mathcal{U}\times \mathcal{W}\to \mathbb{R}^{n_x}$ is the vector field 
with state $x_t \in \mathcal{X} 
\subset \mathbb{R}^{n_x}$, known input $u_t \in \mathcal{U} \subset \mathbb{R}^{n_u}$ and disturbance input $w_t \in \mathcal{W} \subseteq \mathbb{I}\mathcal{W} \triangleq [\underline{w},\overline{w}] \in \mathbb{IR}^{n_w}$. For ease of exposition, we also define $f : \mathcal{Z} \triangleq \mathcal{X} \times \mathcal{W} 
\subset \mathbb{R}^{n_z} \to \mathbb{R}^{n_x}$ as in \eqref{eq:mix_mon_def} that is implicitly dependent on $u_t$ with the augmented state $z_t\triangleq [x_t^\top  \ w_t^\top]^\top \in \mathcal{Z}$.}

Suppose \eqref{eq:mix_mon_def} is a discrete-time system. Then, a mapping $f_d:\mathcal{Z}\times \mathcal{Z} \to \mathbb{R}^{n_x}$ is 
a \sy{discrete-time  mixed-monotone} decomposition function with respect to $f(\cdot)$, 
if it satisfies: 
\begin{enumerate}[i)]
\item $f(\cdot)$ is embedded on the diagonal of $f_d(\cdot,\cdot)$, i.e., $f_d(z,z)=f(z)$. \label{item:1}
\item $f_d$ is monotone increasing in its first 
argument, i.e., $\hat{z}\ge z \implies f_d(\hat{z},z') \geq f_d(z,z')$. 
\label{item:2}
\item \sy{$f_d$ is monotone decreasing in its second argument, i.e., $\hat{z}\ge z \implies f_d(z',\hat{z}) \leq f_d(z',z).$} \label{item:3}
\end{enumerate} 
\sy{Further, if \eqref{eq:mix_mon_def} is a continuous-time system, 
a mapping $f_d:\mathcal{Z}\times \mathcal{Z} \to \mathbb{R}^{n_x}$ is a \sy{continuous-time  mixed-monotone} decomposition function with respect to $f(\cdot)$, 
if it satisfies: 
\begin{enumerate}[i)]
\item $f(\cdot)$ is embedded on the diagonal of $f_d(\cdot,\cdot)$, i.e., $f_d(z,z)=f(z)$. \label{item:1}
\item $f_d$ is monotone increasing in its first 
argument with respect to ``off-diagonal'' arguments, i.e., $\forall i \in \mathbb{N}_{n_x}$, $\hat{z}_j\ge z_j, \forall j \in \mathbb{N}_{n_z}, \hat{z}_i= z_i=x_i   \implies f_{d,i}(\hat{z},z') \geq f_{d,i}(z,z')$.
\label{item:2}
\item \sy{$f_d$ is monotone decreasing in its second argument, i.e., $\hat{z}\ge z \implies f_d(z',\hat{z}) \leq f_d(z',z).$} \label{item:3}
\end{enumerate} 
Moreover, systems that admit mixed-monotone decomposition functions are called mixed-monotone systems.}
%
\end{defn}

Moreover, we extend the concept of decomposition functions \syo{to} \emph{one-sided decomposition functions}.
\begin{defn}[One-Sided Decomposition Functions] \label{def:one_side_dec}
Consider $f:\sy{\mathcal{Z}\subset\mathbb{R}^{n_z}} 
\to \mathbb{R}^{n_x}$ and suppose 
there exist two   mixed-monotone mappings $\overline{f}_d,\underline{f}_d:\mathcal{Z}\times \mathcal{Z} \to \mathbb{R}^{n_x}$ such that for any \sy{$\underline{z}, z, \overline{z} \in \mathcal{Z}$, the following statement holds:
\begin{align}\label{eq:one-sided}
\underline{z} \leq z \leq \overline{z} \implies \underline{f}_d(\underline{z},\overline{z}) \leq f(z) \leq \overline{f}_d(\overline{z},\underline{z}).
\end{align}
Then, $\overline{f}_d$ and $\underline{f}_d$ are called upper and lower decomposition functions for $f$ over $\mathbb{I}\mathcal{Z}$}, respectively.
\end{defn}

\sy{Based on the above definitions, the mixed monotone decomposition function can be viewed as the special case when the upper and lower decomposition functions coincide. Moreover, one-sided decomposition functions can be obtained from a family of (upper and lower) decomposition functions that are generally not unique via the following result:
\begin{cor}[Intersection Property]\label{cor:min_max_dec}
Suppose $\overline{f}^1_d,\overline{f}^2_d$ and $\underline{f}^1_d,\underline{f}^2_d$ are pairs of upper and lower decomposition functions for $f$, respectively. Then, $\min\{\overline{f}^1_d,\overline{f}^2_d\}$ and $\max\{\underline{f}^1_d,\underline{f}^2_d\}$ (i.e., their \emph{intersection}) are also upper and lower decomposition function for $f(\cdot)$, respectively. 
\end{cor}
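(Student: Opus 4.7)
The plan is to verify directly that the candidate pair $\overline{f}_d \triangleq \min\{\overline{f}^1_d, \overline{f}^2_d\}$ and $\underline{f}_d \triangleq \max\{\underline{f}^1_d, \underline{f}^2_d\}$, where $\min$ and $\max$ are taken componentwise, fulfills the two requirements of Definition~\ref{def:one_side_dec}: namely, (a) the sandwich inequality in \eqref{eq:one-sided} holds for the new pair, and (b) both new mappings remain mixed-monotone in the sense of Definition~\ref{defn:dec_func}.

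For part (a), I would fix an arbitrary triple with $\underline{z} \leq z \leq \overline{z}$. Since each pair $(\overline{f}^k_d, \underline{f}^k_d)$ is, by hypothesis, a valid upper/lower decomposition pair for $f$, one has
\begin{align*}
\underline{f}^k_d(\underline{z},\overline{z}) \leq f(z) \leq \overline{f}^k_d(\overline{z},\underline{z}), \quad k = 1,2.
\end{align*}
Taking the componentwise maximum of the two lower bounds and the componentwise minimum of the two upper bounds immediately yields $\underline{f}_d(\underline{z},\overline{z}) \leq f(z) \leq \overline{f}_d(\overline{z},\underline{z})$, which is precisely \eqref{eq:one-sided} for the candidate pair.

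For part (b), I would appeal to the elementary fact that pointwise $\min$ and $\max$ of two order-preserving mappings are themselves order-preserving, and likewise for order-reversing mappings. Concretely, if $g_1,g_2$ are both monotone increasing in a given argument, then for $a \geq b$ we have $g_k(a) \geq g_k(b)$ for each $k$, which immediately gives both $\min\{g_1(a),g_2(a)\} \geq \min\{g_1(b),g_2(b)\}$ and $\max\{g_1(a),g_2(a)\} \geq \max\{g_1(b),g_2(b)\}$; the monotone decreasing case follows by reversing the inequalities. Applying this separately to the first argument (monotone increasing, by condition (ii) of Definition~\ref{defn:dec_func}) and to the second argument (monotone decreasing, by condition (iii)) of each vector component establishes that $\overline{f}_d$ and $\underline{f}_d$ inherit the mixed-monotonicity of their constituents.

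The main ``obstacle'' here is essentially bookkeeping: the statement reduces to standard lattice-theoretic preservation properties of pointwise extrema once the componentwise interpretation of $\min/\max$ is made explicit. The only mildly subtle point worth flagging is that the continuous-time version of mixed-monotonicity in Definition~\ref{defn:dec_func} requires increase only in the off-diagonal arguments of the first slot; but since this is still a monotonicity condition of the form ``$g(a) \geq g(b)$ whenever $a \geq b$ on the relevant index set,'' the same componentwise preservation argument applies verbatim in both the discrete-time and continuous-time settings.
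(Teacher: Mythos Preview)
Your proposal is correct and follows essentially the same approach as the paper. The paper's proof is a one-sentence appeal to the fact that pointwise $\min$ and $\max$ preserve monotonicity in each argument, which is exactly your part (b); your part (a) and the remark on the continuous-time off-diagonal condition simply make explicit what the paper leaves implicit.
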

\begin{proof}
The results follow from the fact that if two mappings with the same domains and image spaces are both monotonically increasing or decreasing in their $i$-th arguments, then their ``point-wise" minimum and maximum are also \mo{monotonically} increasing or decreasing on their $i$-th arguments.  
\end{proof}}

Further, we can slightly generalize the notion of \emph{embedding system with respect to $f_d$} in \cite[(7)]{abate2020tight}, to the \emph{embedding system with respect to $\overline{f}_d,\underline{f}_d$}, through the following definition.
\begin{defn}[Generalized Embedding \sy{Systems}]\label{def:embedding}
For an $n$-dimensional 
system \eqref{eq:mix_mon_def} \sy{with any one-sided decomposition functions $\overline{f}_d,\underline{f}_d$, its   \emph{generalized embedding system is the} 
$2n$-dimensional system \sy{with initial condition $\begin{bmatrix} \overline{x}_0^\top & \underline{x}_0^\top\end{bmatrix}^\top$}:}
\begin{align} \label{eq:embedding}
\syo{\begin{bmatrix}{\overline{x}}_t^+ \\ {\underline{x}}_t^+ \end{bmatrix}=\begin{bmatrix} \overline{f}_d(\begin{bmatrix}(\overline{x}_t)^\top \, \overline{w}^\top\end{bmatrix}^\top\hspace{-0.05cm},\begin{bmatrix}(\underline{x}_t)^\top \, \underline{w}^\top \end{bmatrix}^\top) \\ \underline{f}_d(\begin{bmatrix}(\underline{x}_t)^\top \, \underline{w}^\top \end{bmatrix}^\top\hspace{-0.05cm},\begin{bmatrix}(\overline{x}_t)^\top \, \overline{w}^\top\end{bmatrix}^\top) \end{bmatrix}.} 
\end{align}
\end{defn}

\begin{prop}[State Framer Property]\label{cor:embedding} 
\sy{Let system \eqref{eq:mix_mon_def} with initial state $x_0 \in \mathbb{I}\mathcal{X}_0 \triangleq  [\underline{x}_0,\overline{x}_0]$ be mixed-monotone with a 
generalized embedding system \eqref{eq:embedding} with respect to 
$\overline{f}_d,\underline{f}_d$. Then, for all $t\ge0$, $R^f(t,\mathbb{I}\mathcal{X}_0) \subset \mathbb{I}\mathcal{X}_t \triangleq [\underline{x}_t,\overline{x}_t]$, 
where $R^f(t,\mathbb{I}\mathcal{X}_0) \triangleq
\{\phi(t, x_0, {w}\syo{_{0:t}}) \mid x_0 \in \mathbb{I}\mathcal{X}_0 \text{ and } w_t \in \mathbb{I}\mathcal{W},  \forall t\ge 0\}$ is the reachable set at time $t$ of \eqref{eq:mix_mon_def} 
when initialized within $\mathbb{I}\mathcal{X}_0$ and $ \mathbb{I}\mathcal{X}_t \triangleq [\underline{x}_t,\overline{x}_t]$ is the solution to the generalized embedding system \eqref{eq:embedding}. Consequently,
the system state trajectory $x_t$ 
satisfies $\underline{x}_t \le x_t \le \overline{x}_t, \mo{\forall t \geq 0}$, i.e., is \emph{framed} by $ \mathbb{I}\mathcal{X}_t$.}
\end{prop}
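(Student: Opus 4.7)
The plan is to prove both the discrete-time and continuous-time versions of the framer property by establishing that the interval $[\underline{x}_t,\overline{x}_t]$ generated by the embedding system \eqref{eq:embedding} contains the true state trajectory $x_t$ for every admissible disturbance signal. In both cases, the workhorse is the one-sided decomposition inequality \eqref{eq:one-sided} applied to the augmented state $z_t \triangleq [x_t^\top\ w_t^\top]^\top$ with $\underline{z}_t \triangleq [\underline{x}_t^\top\ \underline{w}^\top]^\top$ and $\overline{z}_t \triangleq [\overline{x}_t^\top\ \overline{w}^\top]^\top$.

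For the discrete-time system, I would proceed by induction on $t$. The base case $\underline{x}_0 \le x_0 \le \overline{x}_0$ holds by hypothesis and $\underline{w}\le w_0\le\overline{w}$ follows from $w_0\in\mathbb{I}\mathcal{W}$, so $\underline{z}_0 \le z_0 \le \overline{z}_0$. For the inductive step, assume $\underline{x}_t \le x_t \le \overline{x}_t$ and $w_t\in\mathbb{I}\mathcal{W}$; then $\underline{z}_t\le z_t\le \overline{z}_t$, and applying the one-sided property \eqref{eq:one-sided} directly yields
\begin{align*}
\underline{f}_d(\underline{z}_t,\overline{z}_t) \le f(z_t) \le \overline{f}_d(\overline{z}_t,\underline{z}_t),
\end{align*}
which by \eqref{eq:mix_mon_def} and \eqref{eq:embedding} is exactly $\underline{x}_{t+1}\le x_{t+1}\le \overline{x}_{t+1}$. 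Combined with $w_{t+1}\in\mathbb{I}\mathcal{W}$, this closes the induction, and taking the union over all admissible $x_0\in\mathbb{I}\mathcal{X}_0$ and disturbance sequences yields $R^f(t,\mathbb{I}\mathcal{X}_0)\subset\mathbb{I}\mathcal{X}_t$.

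For the continuous-time case, the plan is to use a Nagumo-type invariance/comparison argument on the error variables $e^+(t)\triangleq \overline{x}_t - x_t$ and $e^-(t)\triangleq x_t - \underline{x}_t$, which are nonnegative at $t=0$. Suppose for contradiction that $t^\star>0$ is the first time some component, say $e^+_i$, reaches $0$ from above while $e^+(t^\star)\ge 0$ and $e^-(t^\star)\ge 0$ hold componentwise. Then $\underline{z}(t^\star)\le z(t^\star)\le \overline{z}(t^\star)$ with equality in the $i$-th state coordinate, so \eqref{eq:one-sided} together with the off-diagonal monotonicity in item \eqref{item:2} of the continuous-time decomposition definition gives $\dot{x}_i(t^\star)=f_i(z(t^\star))\le \overline{f}_{d,i}(\overline{z}(t^\star),\underline{z}(t^\star))=\dot{\overline{x}}_i(t^\star)$, hence $\dot{e}^+_i(t^\star)\ge 0$, which contradicts $e^+_i$ crossing zero; the symmetric argument handles $e^-$. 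The main obstacle will be handling this boundary argument rigorously, since the basic pointwise inequality only precludes strict crossing, and a standard regularization or comparison lemma is needed to ensure invariance in the non-strict case; this is precisely where the off-diagonal (rather than fully coordinate-wise) monotonicity in the continuous-time definition is essential, as it is the weakest condition guaranteeing compatibility with the flow on the boundary of $\mathbb{I}\mathcal{X}_t$.
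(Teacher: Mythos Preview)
Your proposal is correct and takes essentially the same approach as the paper: for discrete time, the paper's proof literally says ``the discrete-time result follows from repeatedly applying its definition in \eqref{eq:one-sided},'' which is exactly your induction; for continuous time, the paper simply cites \cite[Proposition 3]{cooganmixed}, whose proof is the standard monotone-comparison/Nagumo invariance argument you sketch, including the boundary analysis that exploits the off-diagonal monotonicity in the continuous-time decomposition definition. Your acknowledgement that a regularization/comparison lemma is needed to upgrade the pointwise $\dot{e}_i^\pm(t^\star)\ge 0$ inequality to genuine forward invariance is exactly the technical content that the cited reference supplies.
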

\begin{proof}
\sy{The proof for continuous-time systems is similar to the proof in \cite[Proposition 3]{cooganmixed},}
\sy{while the discrete-time result follows from repeatedly applying its definition in \eqref{eq:one-sided}.}
\end{proof}

\sy{Next, the following observation, which follows directly from Definitions \ref{def:inc_func}, \ref{defn:dec_func} and \ref{def:one_side_dec}, relates the concept of decomposition functions to inclusion functions.}

\begin{prop}[Decomposition-Based Inclusion Functions] \label{cor:dec_inc}
\sy{Given  any upper and lower decomposition functions  $\overline{f}_d,\underline{f}_d$ (or any decomposition function $f_d=\overline{f}_d=\underline{f}_d$) for $f$, 
$$T^{f_d} 
(\mathbb{I}\mathcal{Z}) \triangleq [\underline{f}_d(\underline{z},\overline{z}),\overline{f}_d(\overline{z},\underline{z})]$$
satisfies 
$f(\mathbb{I}\mathcal{Z}) \subset T^{f_d} 
(\mathbb{I}\mathcal{Z})$ with $\mathbb{I}\mathcal{Z} \triangleq [\underline{z},\overline{z}]$ (including overloading; cf. Definition \ref{def:inc_func}). 
Consequently, $T^{f_d} 
(\mathbb{I}\mathcal{Z})$ is an inclusion function (that is based on 
decomposition functions).}
\end{prop}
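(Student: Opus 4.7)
The plan is to derive the inclusion property as a near-immediate consequence of the one-sided decomposition definition, with only a small amount of extra care for the continuous-time overloading convention introduced in Definition~\ref{def:inc_func}.

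First, I would handle the discrete-time case (and the case where $\overline{f}_d = \underline{f}_d = f_d$ is a genuine mixed-monotone decomposition function). Pick an arbitrary $z \in \mathbb{I}\mathcal{Z} = [\underline{z},\overline{z}]$. By hypothesis, $\overline{f}_d$ and $\underline{f}_d$ are upper and lower decomposition functions for $f$, so Definition~\ref{def:one_side_dec}, and in particular the implication \eqref{eq:one-sided}, applied with this $z$ and the bounds $\underline{z}, \overline{z}$, yields
\[
\underline{f}_d(\underline{z},\overline{z}) \leq f(z) \leq \overline{f}_d(\overline{z},\underline{z}).
\]
Hence $f(z) \in [\underline{f}_d(\underline{z},\overline{z}),\overline{f}_d(\overline{z},\underline{z})] = T^{f_d}(\mathbb{I}\mathcal{Z})$. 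Since $z$ was arbitrary in $\mathbb{I}\mathcal{Z}$, this proves $f(\mathbb{I}\mathcal{Z}) \subset T^{f_d}(\mathbb{I}\mathcal{Z})$, which by Definition~\ref{def:inc_func} is precisely the statement that $T^{f_d}$ is an inclusion function. The mixed-monotone special case $f_d = \overline{f}_d = \underline{f}_d$ reduces to the same argument, because properties \ref{item:2}) and \ref{item:3}) of Definition~\ref{defn:dec_func} together with \ref{item:1}) imply the one-sided bounds $f_d(\underline{z},\overline{z}) \leq f_d(z,z) = f(z) \leq f_d(\overline{z},\underline{z})$ by standard monotonicity arguments.

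Second, I would address the overloaded continuous-time interpretation from Definition~\ref{def:inc_func}, where $f(\mathbb{I}\mathcal{Z})$ and $T^{f_d}(\mathbb{I}\mathcal{Z})$ are interpreted componentwise with the $i$-th argument pinned. For each $i \in \mathbb{N}_{n_x}$ and each $z \in \mathbb{I}\mathcal{Z}_{c,i}$, the above pointwise bound applies coordinatewise, giving $f_i(z) \in [\underline{f}_{d,i}(\underline{z},\overline{z}), \overline{f}_{d,i}(\overline{z},\underline{z})]$, which is exactly the $i$-th component of $T^{f_d}(\mathbb{I}\mathcal{Z})$ in the overloaded sense. Taking the union over $z \in \mathbb{I}\mathcal{Z}_{c,i}$ gives $f_{c,i}(\mathbb{I}\mathcal{Z}) \subset T^{f_d}_i(\mathbb{I}\mathcal{Z})$ componentwise, completing the inclusion.

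There is no real obstacle here: the statement is essentially a rephrasing of Definition~\ref{def:one_side_dec} in the inclusion-function language of Definition~\ref{def:inc_func}. The only subtlety worth mentioning explicitly in the write-up is to verify that in the mixed-monotone case the monotonicity-plus-diagonal-embedding conditions really do yield the one-sided sandwich \eqref{eq:one-sided}; this is a one-line chaining of property \ref{item:2}) (increasing in the first argument) and property \ref{item:3}) (decreasing in the second argument) applied to $f_d(z,z) = f(z)$.
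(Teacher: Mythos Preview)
Your proposal is correct and matches the paper's approach: the paper does not give an explicit proof but simply states that the result ``follows directly from Definitions~\ref{def:inc_func}, \ref{defn:dec_func} and \ref{def:one_side_dec},'' and your write-up is precisely the natural elaboration of that claim. Your handling of both the one-sided and mixed-monotone cases, as well as the continuous-time overloading, is the intended reading.
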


\sy{As noted earlier, (mixed-monotone) decomposition functions defined in Definition \ref{defn:dec_func} are not unique. Hence, a measure of their tightness is beneficial for comparing these functions.}

\begin{defn}[Tightness of Decompositions] \cite[Definition 2]{abate2020tight}\label{defn:tightness}
A decomposition function $f^1_d$ for system \eqref{eq:mix_mon_def} is tighter than decomposition function $f^2_d$, if \sy{for all} $z \leq \hat{z}$,
\begin{align} \label{eq:tightness_dec}
f^2_d(z,\hat{z}) \leq f^1_d(z,\hat{z}) \ \text{and} \ f^1_d(\hat{z},{z}) \leq f^2_d(\hat{z},{z}). 
\end{align} 
Then, 
\sy{$f^O_d$} is tight, i.e., it is the tightest possible decomposition function for $f$, if \eqref{eq:tightness_dec} holds with $f^1_d=\sy{f^O_d}$ and any other decomposition function $f^2_d$. 

\sy{Furthermore, we define the \emph{metric/measure} of tightness as the maximum dimension-wise Hausdorff distance given by:
\begin{align} \label{eq:metric}
q(f(\mathbb{I}\mathcal{Z}),T^{f_d} 
(\mathbb{I}\mathcal{Z}))\triangleq \max_{i\in \mathbb{N}_{n_x}}\tilde{q}(f_i(\mathbb{I}\mathcal{Z}),T^{f_{d_i}} 
(\mathbb{I}\mathcal{Z})),
\end{align}
where $\tilde{q}(\sy{\mathbb{I}}\mathcal{X}_1,\sy{\mathbb{I}}\mathcal{X}_2)\triangleq \max \{|\underline{x}_1-\underline{x}_2|_{\infty},|\overline{x}_1-\overline{x}_2|_{\infty}\}$ is the Hausdorff distance between 
two real intervals $\sy{\mathbb{I}}\mathcal{X}_1=[\underline{x}_1,\overline{x}_1]$ and $\sy{\mathbb{I}}\mathcal{X}_2=[\underline{x}_2,\overline{x}_2]$, both in $\mathbb{IR}$ \cite{cornelius1984computing}.  
Moreover, the above tightest decomposition function $T_O^{f_d}(\mathbb{I}\mathcal{Z}) \triangleq [f_d^O(\underline{z},\overline{z}),f_d^O(\overline{z},\underline{z})]$ satisfies $q(f(\mathbb{I}\mathcal{Z}),T_O^{f_d}(\mathbb{I}\mathcal{Z}))=0$.}
\end{defn}


\syo{Further, by Proposition \ref{cor:dec_inc}, we can obtain 
inclusion functions from existing 
decomposition functions as shown in Propositions \ref{prop:Liren_dec} and \ref{prop:tight_dec}. 
Note that we slightly modified the decomposition function in \cite[Theorem 2]{yang2019sufficient} to make it also applicable for continuous-time systems and for closed Jacobian intervals. }

\begin{prop}[$T^{f_d}_L$ Inclusion Functions]
\label{prop:Liren_dec}
\sy{For any system in the form of \eqref{eq:mix_mon_def}, suppose that $f:\mathbb{R}^{n_z} \to \mathbb{R}^{n_x}$ is differentiable and $\frac{\partial f_i}{\partial z_j}(z)\in [a_{ij},b_{ij}], \forall z \in \sy{\mathbb{I}}\mathcal{Z}\sy{\triangleq [\underline{z},\overline{z}]}\subseteq \sy{\mathbb{I}}\mathbb{R}^{n_z}$. Then, 
a \sy{discrete-time or continuous-time}  mixed-monotone decomposition function $f^L_{d}=[f^L_{d,1}\dots f^L_{d,n_x}]$ with respect to $f(\cdot)$ and its corresponding inclusion function $T^{f_d}_L(\mathbb{I}\mathcal{Z})=[f^L_d(\mo{\underline{z}},\mo{\overline{z}}),f^L_d(\mo{\overline{z}},\mo{\underline{z}})]$ can be 
\sy{constructed} as follows:}

\sy{For all $ i \in \mathbb{N}_{n_x}$ and $j \in \mathbb{N}_{n_z}$, 
\begin{align}\label{eq:Lir_dec}
f^L_{d,i}({z},\hat{z})=f_i({\zeta})+(\alpha_i-\beta_i)({z}-\hat{z}),
\end{align}
with  $\alpha_i=[\alpha_{i1},\dots,\alpha_{in_z}]$, $\beta_i=[\beta_{i1},\dots,\beta_{in_z}]$ and $\zeta=[\zeta_1,\dots,\zeta_{n_z}]^\top$, where\\
 $\alpha_{ij}=\begin{cases} 0, &\hspace{-0.3cm} \text{Cases} \, 1,3,4,5, \\ |a_{ij}|, & \hspace{-0.3cm}\text{Case} \, 2,\end{cases}$\hspace{-0.1cm} $\beta_{ij}=\begin{cases} 0, &\hspace{-0.3cm} \text{Cases} \,  1,2,4,5, \\ -|b_{ij}|, &\hspace{-0.3cm} \text{Case} \,  3,\end{cases}$ 
and $\zeta_j=\begin{cases} \underline{z}_j, \ \text{Cases} \ 1,2,  \\ \overline{z}_j, \ \text{Cases} \ 3,4,\\ x_j, \ \text{Case} \ 5,\end{cases}$ with Cases 1 through 4 for discrete-time systems and Cases 1 through 5 for continuous-time systems, 
\syo{where the cases as defined as:} Case $1: a_{ij}\geq 0$, Case $2: a_{ij}\leq 0,b_{ji}\geq 0, |a_{ij}|\leq |b_{ij}|$, Case $3: a_{ij}\leq 0,b_{ij}\geq 0, |a_{ij}|\geq |b_{ij}| $, Case $4: b_{ij}\leq 0$ 
and Case 5: $j=i$.} 
%
\end{prop}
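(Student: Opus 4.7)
The plan is to prove Proposition \ref{prop:Liren_dec} by directly verifying the three defining conditions of a mixed-monotone decomposition function in Definition \ref{defn:dec_func} for $f^L_d$, with a case analysis driven by the partitioning of index pairs $(i,j)$ according to the sign structure of the Jacobian bounds $[a_{ij},b_{ij}]$. The claim about the associated inclusion function $T^{f_d}_L$ will then follow immediately from Proposition \ref{cor:dec_inc}.

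First, for the embedding-on-diagonal property \ref{item:1}, I would observe that when $z=\hat z$, the linear term $(\alpha_i-\beta_i)(z-\hat z)$ vanishes identically and, in every case, the argument $\zeta$ satisfies $\zeta=z$: in Cases 1 and 2 we take $\zeta_j=\underline z_j=z_j$; in Cases 3 and 4 we take $\zeta_j=\overline z_j=\hat z_j=z_j$; and in Case 5 (continuous-time, $j=i$) we take $\zeta_j=x_j$, which by the off-diagonal convention in Definition \ref{defn:dec_func} is precisely the common value $z_i=\hat z_i$. Hence $f^L_{d,i}(z,z)=f_i(z)$.

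Next, for monotonicity in the first argument (condition \ref{item:2}), I would examine $\partial f^L_{d,i}/\partial z_j$ case by case, using the given bound $\partial f_i/\partial z_j\in[a_{ij},b_{ij}]$ on $\mathbb{I}\mathcal{Z}$: in Case 1 the derivative contribution from $f_i(\zeta)$ is already nonnegative with no compensation needed; in Case 2 the potential negativity of $\partial f_i/\partial z_j$ (down to $a_{ij}\le 0$) is exactly offset by the added slope $\alpha_{ij}=|a_{ij}|=-a_{ij}$; in Case 3, $f_i(\zeta)$ has no dependence on $z_j$ (since $\zeta_j=\hat z_j$) and the compensation $-\beta_{ij}=|b_{ij}|\ge 0$ is added directly; in Case 4 both terms yield zero; and for continuous-time Case 5 ($j=i$), since the monotonicity requirement is only on off-diagonal components, the fact that $\zeta_i$ is pinned to the common diagonal value suffices. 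Condition \ref{item:3} is symmetric: the coefficient of $\hat z_j$ in the linear term is $-(\alpha_{ij}-\beta_{ij})\le 0$, and whenever $\zeta_j=\hat z_j$ (Cases 3,4) the compensation $-|b_{ij}|$ dominates the at-most-$b_{ij}$ positive slope of $f_i$ in its $j$-th coordinate, guaranteeing a nonpositive total derivative.

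Where Clarke Jacobians (and hence nondifferentiable $f$) come into play, the same monotonicity inequalities can be recovered by integrating along the segment between two ordered points and replacing pointwise partial derivatives with elements of the interval $[a_{ij},b_{ij}]$ supplied by the hypothesis. Finally, since conditions \ref{item:1}--\ref{item:3} have been verified in both the discrete-time form (all five arguments active) and the off-diagonal continuous-time form (Case 5 required), $f^L_d$ qualifies as a mixed-monotone decomposition function in each setting, and Proposition \ref{cor:dec_inc} then immediately yields $T^{f_d}_L(\mathbb{I}\mathcal{Z})=[f^L_d(\underline z,\overline z),f^L_d(\overline z,\underline z)]$ as a valid inclusion function. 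The main subtlety I expect is bookkeeping-heavy: keeping the \emph{per-row} vector $\zeta$ (depending on $i$ through the case for each $(i,j)$) distinct from the global interval endpoints $\underline z,\overline z$ of $\mathbb{I}\mathcal Z$ that are being overloaded in the notation, and handling the continuous-time off-diagonal convention for Case 5 without inadvertently using the stronger discrete-time monotonicity.
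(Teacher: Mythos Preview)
Your direct verification is correct: checking conditions \ref{item:1}--\ref{item:3} of Definition~\ref{defn:dec_func} by a case analysis on the sign structure of $[a_{ij},b_{ij}]$ works, and your handling of the notational overloading (reading $\underline z_j,\overline z_j$ in the definition of $\zeta$ as the arguments $z_j,\hat z_j$ of $f^L_{d,i}$) is the intended interpretation. One small omission: in your treatment of condition~\ref{item:3} you cover only Cases~3--4 explicitly for the $f_i(\zeta)$ term; you should also note that in continuous-time Case~5 ($j=i$), $\zeta_i=x_i$ is fixed and $\alpha_{ii}=\beta_{ii}=0$, so $\partial f^L_{d,i}/\partial \hat z_i=0$, which is needed because condition~\ref{item:3} (unlike \ref{item:2}) is \emph{not} restricted to off-diagonal coordinates. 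Also, the Clarke-Jacobian remark is superfluous here since the proposition assumes $f$ differentiable.

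The paper itself does not give a standalone proof of this proposition; it is stated as a (slightly modified) citation of \cite[Theorem~2]{yang2019sufficient}. The paper's own justification comes later and indirectly: Theorem~\ref{thm:Liren_mm}(i) shows that $f^L_d$ arises as the member of the remainder-form family \eqref{eq:decomp1} corresponding to a specific supporting vector $\mathbf m^L_i\in\mathbf M^c_i$ and linear remainder $h^L_i=\langle\mathbf m^L_i,\cdot\rangle$, after which Theorem~\ref{thm:fdf} guarantees mixed-monotonicity for every member of that family. So your route is more elementary and self-contained, while the paper's route buys more---it simultaneously places $f^L_d$ inside the family and thereby proves (Theorem~\ref{thm:Liren_mm}(ii)) that $T^{f_d}_R$ is always at least as tight as $T^{f_d}_L$.
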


\begin{prop}[Tight Decomposition Functions for Mixed-Monotone Systems]\cite[Theorem 2]{yang2019tight},\cite[Theorem 1]{abate2020tight} \label{prop:tight_dec}
For any system in the form of \eqref{eq:mix_mon_def} \mo{and $\mathbb{I}\mathcal{Z} \triangleq [\underline{z},\overline{z}]$}, 
a tight (optimal) \sy{discrete-time or continuous-time}  mixed-monotone decomposition function $f^O_d=[f^O_{d,1} \dots f^O_{d,n_x}]$ \sy{and its corresponding tight inclusion function $T^{f_d}_O(\mathbb{I}\mathcal{Z})\triangleq[f^O_d(\underline{z},\overline{z}),f^O_d(\overline{z},\underline{z})]$ (i.e., $T^f_O(\mathbb{I}\mathcal{Z})=T^{f_d}_O(\mathbb{I}\mathcal{Z})$)} can be \sy{constructed as follows:}  

If \eqref{eq:mix_mon_def} is a discrete-time \syo{system}, then $\forall i\in \mathbb{N}_{n_x}$:
\begin{align}\label{eq:tight_dec_disc}
f^O_{d,i}(z,\hat{z})=\begin{cases} \min\limits_{\zeta \in [z,\hat{z}]} f_i(\zeta) \quad \text{if} \ z\leq\hat{z}, \\
\max\limits_{\zeta \in [\hat{z},{z}]} f_i(\zeta) \quad \text{if} \ \hat{z}\leq{z}. \end{cases}
\end{align}
Moreover, if \eqref{eq:mix_mon_def} is a continuous-time system, then $\forall i \in \mathbb{N}_{n_x}$:
\begin{align}\label{eq:tight_dec}
f^O_{d,i}(z,\hat{z})=\begin{cases} \min\limits_{\zeta \in [z,\hat{z}],\zeta_i=x_i} f_i(\zeta) \quad \text{if} \ z\leq\hat{z}, \\[-0.1cm]
\max\limits_{\zeta \in [\hat{z},{z}],\zeta_i=x_i} f_i(\zeta) \quad \text{if} \ \hat{z}\leq{z}. \end{cases}
\end{align}
\end{prop}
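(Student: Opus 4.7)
The plan is to establish two things: first, that the proposed $f^O_d$ satisfies the three defining properties of a mixed-monotone decomposition function in Definition \ref{defn:dec_func}, and second, that it is the tightest such function in the sense of Definition \ref{defn:tightness}. Both results have appeared in \cite{yang2019tight,abate2020tight}; I sketch the main ideas below.

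For validity, I would first observe that the diagonal embedding $f^O_d(z,z) = f(z)$ holds immediately, since the feasible set in \eqref{eq:tight_dec_disc} or \eqref{eq:tight_dec} collapses to $\{z\}$ when $z = \hat{z}$, with the $\zeta_i = x_i$ constraint automatically satisfied. For monotonicity in the discrete-time case with $z \leq \hat{z}$, I would note that replacing $z$ by any $z' \in [z,\hat{z}]$ shrinks the feasible set, $[z',\hat{z}] \subseteq [z,\hat{z}]$, so $\min_{[z',\hat{z}]} f_i \geq \min_{[z,\hat{z}]} f_i$, giving monotonicity in the first argument; enlarging $\hat{z}$ enlarges the feasible set, decreasing the min and yielding monotone decreasing behavior in the second argument. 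The max-based branch for $\hat{z} \leq z$ is symmetric. For the continuous-time case, the same argument carries over on the restricted slice $\zeta_i = x_i$, where only off-diagonal coordinates can vary, matching the off-diagonal monotonicity required by Definition \ref{defn:dec_func}.

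For tightness, I would fix any other decomposition function $f^2_d$ and any ordered pair $z \leq \hat{z}$, and chain the inequalities $f^2_{d,i}(z, \hat{z}) \leq f^2_{d,i}(\zeta, \hat{z}) \leq f^2_{d,i}(\zeta, \zeta) = f_i(\zeta)$ for any $\zeta \in [z,\hat{z}]$. Here, the first inequality uses monotone increasing in the first argument (off-diagonal in the continuous-time case), the second uses monotone decreasing in the second argument, and the equality uses the diagonal embedding. Taking the minimum over $\zeta$ (with $\zeta_i = x_i$ in the continuous-time case) then yields $f^2_{d,i}(z,\hat{z}) \leq f^O_{d,i}(z, \hat{z})$, and a symmetric max-based argument handles $\hat{z} \leq z$, verifying the tightness condition \eqref{eq:tightness_dec}. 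The main subtlety will be the continuous-time case, where monotonicity can only be invoked along off-diagonal directions; fortunately, the $\zeta_i = x_i$ restriction precisely enforces that only off-diagonal variations are considered in the minimization/maximization, so the same inequality chain goes through intact.
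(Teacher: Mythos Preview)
Your proof sketch is correct and follows exactly the argument given in the cited references \cite{yang2019tight,abate2020tight}; note that the paper itself does not supply an independent proof of this proposition but simply recalls it from those sources. The validity step (feasible-set monotonicity under coordinate enlargement/shrinkage) and the tightness step (the chain $f^2_{d,i}(z,\hat{z}) \leq f^2_{d,i}(\zeta,\hat{z}) \leq f^2_{d,i}(\zeta,\zeta) = f_i(\zeta)$ followed by taking the minimum over $\zeta$) are precisely the standard arguments, and your handling of the continuous-time off-diagonal subtlety via the $\zeta_i = x_i$ constraint is on point.
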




\sy{
\begin{rem}
On the flip side, the discrete-time mixed-monotone decomposition functions can also be directly used as inclusion functions; hence, the proposed decomposition functions can also be relatively easily incorporated into existing analysis, estimation and planning algorithms that are based on \emph{interval arithmetic}, e.g., \cite{jaulinapplied,cornelius1984computing,moore2009introduction,alefeld2000interval,kieffer2002guaranteed}. 
\end{rem}
}

Although Proposition \ref{prop:tight_dec} provides theoretically tight decomposition functions, 
it has some limitations in practice (see \cite[Section III]{cooganmixed} for a detailed discussion). For instance, exact closed-form solutions to the nonlinear programs in  \eqref{eq:tight_dec_disc} and  \eqref{eq:tight_dec} may not always be available. \sy{With this in mind, we define the notion of 
 tractability of decomposition functions as follows.}
\begin{defn}[Tractable Decomposition Functions]\label{defn:tractability}
$\overline{f}_d,\underline{f}_d$ are \sy{computationally} tractable/computable one-sided decomposition functions for mapping $f$, if they can be \sy{constructed in closed-form\mo{,} i.e., with a finite number of elementary operations and without differentiation nor an iterative procedure.} 
 \end{defn}
 
 \sy{Thus, a corollary of Proposition \ref{prop:tight_dec} is as follows.}
 \begin{cor}[Tight \sy{and Tractable} Decomposition \sy{Function}s for JSS Vector Fields] \label{cor:jss_tight}
\moh{Suppose} $f(\cdot)$ is continuously differentiable and Jacobian sign-stable (JSS) \cite{yang2019sufficient}, i.e., 
\sy{$\forall i \in \mathbb{N}_{n_x}, \forall j \in \mathbb{N}_{n_z}, J^f_{ij}(z)\triangleq \frac{\partial f_i}{\partial z_j}(z)\geq 0, \forall z \in  \mathbb{I}\mathcal{Z}$ or $J^f_{ij}(z)\triangleq \frac{\partial f_i}{\partial z_j}(z)\leq 0, \forall z \in  \mathbb{I}\mathcal{Z}$.} Then, the following statements hold\syo{:}
\begin{enumerate}[i)]
\moh{\item $\forall i \in \mathbb{N}_{n_x}, \forall j \in \mathbb{N}_{n_z}$, $f_i(\cdot)$ in either monotonically non-decreasing or monotonically non-increasing in its $j$-th argument $z_j$, over the entire domain $\mathbb{I}\mathcal{Z}$\syo{;}}  
\item The optimization programs in \eqref{eq:tight_dec_disc} and \eqref{eq:tight_dec} can be \sy{tractably} and 
exactly solved by enumerating $f_i(\cdot)$ \sy{at} the vertices of \sy{$\mathbb{I}\mathcal{Z}$  (with fixed $\zeta_i=x_i$ for continuous-time systems) and choosing the corresponding optima.} 
 \end{enumerate} 
\end{cor}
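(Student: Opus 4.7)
The plan is to handle the two parts sequentially, with part (i) establishing the coordinate-wise monotonicity needed for the vertex enumeration in part (ii).

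For part (i), I would reduce the multivariate claim to a one-dimensional calculus argument. Fix any $i \in \mathbb{N}_{n_x}$ and $j \in \mathbb{N}_{n_z}$, and for any fixed values of the remaining coordinates $(z_1,\dots,z_{j-1},z_{j+1},\dots,z_{n_z})$ consistent with $\mathbb{I}\mathcal{Z}$, define the single-variable restriction $g(s) \triangleq f_i(z_1,\dots,z_{j-1},s,z_{j+1},\dots,z_{n_z})$ on $s \in [\underline{z}_j,\overline{z}_j]$. By continuous differentiability of $f$, $g'(s) = J^f_{ij}$ evaluated at a point that remains in $\mathbb{I}\mathcal{Z}$ for all admissible $s$, so by the JSS assumption $g'$ has a fixed sign throughout $[\underline{z}_j,\overline{z}_j]$. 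The fundamental theorem of calculus then yields $g(s_1) - g(s_2) = \int_{s_2}^{s_1} g'(s)\,ds$ with sign determined by that of $J^f_{ij}$, establishing that $f_i$ is monotonic (non-decreasing or non-increasing according to the sign of $J^f_{ij}$) in $z_j$ over all of $\mathbb{I}\mathcal{Z}$.

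For part (ii), I would leverage (i) to reduce the nonlinear programs in \eqref{eq:tight_dec_disc} and \eqref{eq:tight_dec} to a finite vertex enumeration. Consider the minimization $\min_{\zeta \in [z,\hat{z}]} f_i(\zeta)$ (the maximization is symmetric). By part (i), for each $j$ the one-dimensional minimum of $f_i$ over $[z_j,\hat{z}_j]$ with the remaining coordinates held fixed is attained at $z_j$ if $J^f_{ij} \geq 0$ on $\mathbb{I}\mathcal{Z}$ and at $\hat{z}_j$ if $J^f_{ij} \leq 0$. The key observation is that, because JSS guarantees the sign of $J^f_{ij}$ is invariant over the \emph{entire} $\mathbb{I}\mathcal{Z}$, the optimal endpoint for coordinate $j$ does not depend on the values chosen for the other coordinates. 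Hence, one can select each coordinate's optimal endpoint independently, producing a single minimizing vertex of $\mathbb{I}\mathcal{Z}$ among at most $2^{n_z}$ candidates. For the continuous-time case, the constraint $\zeta_i = x_i$ simply removes the $i$-th coordinate from the enumeration, leaving at most $2^{n_z-1}$ vertices to compare.

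The main obstacle I foresee is not conceptual but rather justifying carefully that the \emph{independent} coordinate-wise selection actually delivers the global optimum, rather than merely a coordinate-wise stationary point. This is precisely where JSS is essential: the decoupling is genuine because the sign of each partial derivative is invariant across $\mathbb{I}\mathcal{Z}$, so substituting one coordinate's optimal endpoint never flips the optimal choice for any other coordinate. If a more pedestrian argument is preferred, one can instead iterate the one-dimensional optimization coordinate by coordinate, with each step justified by part (i), and observe that the terminal iterate belongs to the vertex set of $\mathbb{I}\mathcal{Z}$ (intersected with $\{\zeta_i = x_i\}$ in the continuous-time case), yielding tractability and exactness simultaneously.
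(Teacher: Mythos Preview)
Your proposal is correct and follows the standard argument; the paper does not give an explicit proof of this corollary, treating it as an immediate consequence of Proposition~\ref{prop:tight_dec}. The generalized CJSS version (Proposition~\ref{prop:CJSS_optimality}) is proved along the same lines you sketch, replacing the fundamental theorem of calculus with the Clarke mean value theorem to establish coordinate-wise monotonicity, and then concluding that the optima are attained at vertices.
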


 \subsubsection{\sy{A Novel Class of Mixed-Monotone Systems}}
 
\sy{To describe our modeling framework, we 
formally define a novel class of nonsmooth 
systems, which we show to include a 
wide range of nonlinearities 
and prove  in Section \ref{sec:main} to be mixed-monotone.} 
\begin{defn}[Either-Sided Locally Lipschitz Systems] \label{defn:locally-Lip}
System \eqref{eq:mix_mon_def} is \emph{either-sided locally Lipschitz continuous} (ELLC) 
\sy{\mo{at} $z \in \mathcal{Z}$,} \mo{if} there exists an open neighborhood $\mathcal{N}_{z} \subset \mathcal{Z}$ of $z$\mo{,} \sy{and for all $i \in \mathbb{N}_{n_x}$, there exist vectors $\kappa_i \triangleq [\kappa_{i1} \dots \kappa_{in_z}] \in \mathbb{R}^{n_z}$ with non-zero elements (i.e., $\kappa_{ij}\ne 0, \forall j \in \mathbb{N}_{n_z}$) 
and constants $\rho_i \in \mathbb{R}$} 
such that $\forall z',z'' \in \mathcal{N}_{z}$, 
\begin{align}\label{eq:one-sided-Lip}
\sy{\moh{\langle}\kappa_i(f_i(z')\hspace{-0.07cm}-\hspace{-0.07cm}f_i(z'')), z' \hspace{-0.07cm}-\hspace{-0.07cm}z''\moh{\rangle} \hspace{-0.07cm} \leq \rho_i\|z'\hspace{-0.07cm}-\hspace{-0.07cm}z''\|^2_2, \ \forall i \in \mathbb{N}_{n_x},}
\end{align}
where 
\sy{$\langle \cdot,\cdot\rangle$} denotes the inner product operator. \sy{Further, if \eqref{eq:one-sided-Lip} holds $\forall i \in \mathbb{N}_{n_x}$, we also call $f$ an ELLC function on $\mathcal{Z}$.}
\end{defn}
Note that \eqref{eq:one-sided-Lip} holds for a very broad range of nonlinear systems. Particularly, \sy{ELLC systems reduce to one-sided locally Lipschitz systems}
\footnote{System \eqref{eq:mix_mon_def} with $n_x=n_z$ is one-sided locally Lipschitz continuous 
\sy{if for all $z \in \mathcal{Z}$, there exist an open neighborhood $\mathcal{N}_{z} \subset \mathcal{Z}$ of $z$ and a} ${\rho} \in \mathbb{R}$ such that $(f(z')-f(z''))^\top (z'-z'') \leq {\rho}\|z'-z''\|^2_2, \forall z',z'' \in \mathcal{N}_z$.}, 
\sy{when $n_z=n_x$ and $\kappa_{ii}=\kappa_{jj} \triangleq \kappa > 0$ and $\kappa_{ij}=0$, $\forall i \neq j$,
with 
Lipschitz constant $\frac{1}{\kappa}\sum_{i=1}^{n_x}\rho_i$. 
Moreover, the class of ELLC systems is a superset of the class of locally Lipschitz continuous (LLC) systems\footnote{System \eqref{eq:mix_mon_def} is 
\sy{LLC if for all $z \in \mathcal{Z}$, there exist an open neighborhood $\mathcal{N}_{z}$ 
of $z$ and a ${\rho} \ge 0$ 
such that $\| f(z')-f(z'')\|_2 \le \rho \|z'-z''\|_2$, 
$\forall z',z''\in \mathcal{N}_z$.}}, 
as shown next.}
\begin{prop}\label{prop:Lip_ELLC}
An LLC system is also an ELLC system. 
\end{prop}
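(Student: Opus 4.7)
The plan is to derive the ELLC inequality \eqref{eq:one-sided-Lip} directly from the LLC inequality via Cauchy--Schwarz, making an explicit constructive choice of the parameters $\kappa_i$ and $\rho_i$. Since the two definitions are local (both quantified over some open neighborhood $\mathcal{N}_z$ of an arbitrary $z \in \mathcal{Z}$), I would work within a single fixed neighborhood and Lipschitz constant $\rho \geq 0$ supplied by the LLC hypothesis, and use the \emph{same} neighborhood $\mathcal{N}_z$ when verifying ELLC.

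First I would pick, for each $i \in \mathbb{N}_{n_x}$, any vector $\kappa_i \in \mathbb{R}^{n_z}$ with all non-zero entries (so the ELLC non-degeneracy condition $\kappa_{ij}\neq 0$ is automatic); the simplest choice is $\kappa_i = \mathbf{1}_{n_z}$, the all-ones vector. Next, for arbitrary $z',z'' \in \mathcal{N}_z$ I would bound the left-hand side of \eqref{eq:one-sided-Lip} by its absolute value and factor the scalar $f_i(z')-f_i(z'')$ out of the inner product, giving
\begin{align*}
\langle \kappa_i(f_i(z')-f_i(z'')),\, z'-z''\rangle
&\leq |f_i(z')-f_i(z'')|\cdot|\langle \kappa_i, z'-z''\rangle|.
\end{align*}
Two applications of Cauchy--Schwarz then finish the estimate: $|\langle \kappa_i,z'-z''\rangle| \leq \|\kappa_i\|_2\|z'-z''\|_2$, and $|f_i(z')-f_i(z'')| \leq \|f(z')-f(z'')\|_2 \leq \rho\|z'-z''\|_2$, where the last step invokes the LLC hypothesis on $\mathcal{N}_z$.

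Combining these bounds yields
$\langle \kappa_i(f_i(z')-f_i(z'')),\, z'-z''\rangle \leq \rho\|\kappa_i\|_2\,\|z'-z''\|_2^2$,
so setting $\rho_i \triangleq \rho\|\kappa_i\|_2 \in \mathbb{R}$ satisfies \eqref{eq:one-sided-Lip} for all $i \in \mathbb{N}_{n_x}$ on $\mathcal{N}_z$. Since $z \in \mathcal{Z}$ was arbitrary, $f$ is ELLC on $\mathcal{Z}$. The argument is essentially routine; the only subtlety worth flagging is the need to bound a possibly sign-indefinite inner product by its absolute value before applying Cauchy--Schwarz, and to emphasize that $\rho_i$ in the ELLC definition is merely a real number (not required to be non-negative), so the construction easily meets the requirement.
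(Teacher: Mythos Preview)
Your proof is correct and follows essentially the same route as the paper's: both arguments choose an arbitrary $\kappa_i$ with non-zero entries, apply Cauchy--Schwarz to the inner product $\langle \kappa_i (f_i(z')-f_i(z'')),\, z'-z''\rangle$, and then invoke the componentwise Lipschitz bound $|f_i(z')-f_i(z'')|\le \rho\|z'-z''\|_2$ to obtain $\rho_i = \rho\|\kappa_i\|_2$. Your write-up is slightly more explicit about factoring out the scalar $f_i(z')-f_i(z'')$ before bounding, but the substance is identical.
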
 
\begin{proof}\sy{Suppose a system is LLC, then each $f_i$ is also LLC with some $\tilde{\rho}_i$. Next, by applying the Cauchy-Schwartz inequality and with each $f_i$ being LLC and any $\kappa_i$ with non-zero elements, we have}
$\moh{\langle}\kappa_i(f_i(z')-f_i(z)), z'-z\moh{\rangle} \leq |\kappa_i||f_i(z')-f_i(z)|\|z'-z\|_2 \leq \rho_i \|z'-z\|^2_2$, where $\rho_i \triangleq |\kappa_i| \tilde{\rho}_i$. 
\end{proof} 
Next, we 
\sy{review a} notion of generalized gradients 
used in \emph{nonsmooth} analysis in systems and control theory \cite{clarke2009nonsmooth} when \sy{vector fields of the systems} are not necessarily differentiable.   
\begin{defn}[Clarke Generalized \syo{D}irectional \syo{D}erivatives]\cite[Chapter II]{demianov1995constructive}\label{defn:gen_grad}
\sy{Given a function} $f :\mathcal{Z} \subseteq \mathbb{R}^{n_z} \to \mathbb{R}$, 
\begin{align}\label{eq:key-clarke-bound}
\begin{array}{ll}
 {f_{C}^\uparrow}(z,v) \triangleq \limsup\limits_{t\to z,\lambda \downarrow 0}\frac{f(t+\lambda v)-f(t)}{\lambda}=\sup\limits_{\xi \in {\partial}_C f(z)} \xi^\top v, \\
 {f_{C}^\downarrow}(z,v) \triangleq \liminf\limits_{t\to z,\lambda \downarrow 0}\frac{f(t+\lambda v)-f(t)}{\lambda}=\inf\limits_{\xi \in {\partial}_C f(z)} \xi^\top v, 
 \end{array}
\end{align}
are 
the (generalized) Clarke upper and lower directional derivatives\sy{/gradients} of $f(\cdot)$ at $z \in \mathcal{Z}$ in the direction $v \in \mathbb{R}^{n_z}$, respectively, where 
 the set
\begin{align*}
{\partial}_C f(z) \triangleq \{\xi \in \mathbb{R}^{n_z} | {f_{C}^\downarrow}(z,v) \leq \xi^\top v \leq {f_{C}^\uparrow}(z,v), \forall v \in \mathbb{R}^{n_z} \},
\end{align*}
 is 
 the Clarke sub-differential (set) of $f(\cdot)$ at $z\in \mathcal{Z}$. 
\end{defn}
\sy{Note that, by definition,} \sy{$ {f_{C}^\uparrow}(z,v)$ and $ {f_{C}^\downarrow}(z,v)$} are the \emph{upper} and \emph{lower support functions} of the set ${\partial}_C f(z)$. 
\sy{Further,} as shown in \cite[\mo{Appendix I}]{demianov1995constructive}, 
$\partial_C f(z)$ \sy{for an LLC system} is nonempty, convex and compact 
for all $z \in \mathcal{Z}$, and consequently, at each 
$z \in \mathcal{Z}$, \sy{the Clarke directional derivatives are bounded in each direction $v$.} 
However, this does not hold in general for ELLC systems. 
\sy{Nonetheless, \mo{by} the following proposition\mo{,}} 
the Clarke directional derivatives \sy{in some specific directions} are bounded from above \emph{or} from below. 
\begin{prop} \label{prop:ELLC} 
Suppose $f :\mathcal{Z} \subseteq \mathbb{R}^{n_z} \to \mathbb{R}$ is ELLC on $\mathcal{Z}$ and 
let \sy{$e_j, \forall j \in \mathbb{N}_{n_z}$} denote the standard unit vector\syo{/basis} in the $j$-th coordinate direction. 
Then, $f_C^\uparrow (z,e_j)$ is bounded from above or $f_C^\downarrow (z,e_j)$ is bounded from below. 
\end{prop}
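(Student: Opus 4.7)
The plan is to exploit the ELLC inequality by testing it with the specific pair $z' = t + \lambda e_j$ and $z'' = t$, which are both in the open neighborhood $\mathcal{N}_z$ when $t$ is close enough to $z$ and $\lambda$ is small. Since $n_x = 1$ here, I would drop the index $i$ and write the defining inequality \eqref{eq:one-sided-Lip} in scalar form as
\begin{equation*}
(f(z') - f(z'')) \, \langle \kappa, z' - z'' \rangle \leq \rho \, \|z' - z''\|_2^2,
\end{equation*}
noting that $\kappa(f(z')-f(z''))$ is a scalar multiple of the vector $\kappa$, so the inner product simplifies as shown.

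With the substitution $z' - z'' = \lambda e_j$, I get $\langle \kappa, z' - z'' \rangle = \kappa_j \lambda$ and $\|z'-z''\|_2^2 = \lambda^2$, so after dividing by $\lambda^2 > 0$ the inequality becomes
\begin{equation*}
\kappa_j \cdot \frac{f(t + \lambda e_j) - f(t)}{\lambda} \leq \rho.
\end{equation*}
From here I would split into two cases based on the sign of $\kappa_j$, which is non-zero by the ELLC definition. If $\kappa_j > 0$, dividing preserves the inequality and taking $\limsup$ as $t \to z$, $\lambda \downarrow 0$ yields $f_C^\uparrow(z, e_j) \leq \rho/\kappa_j < \infty$. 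If $\kappa_j < 0$, dividing flips the inequality, and taking $\liminf$ yields $f_C^\downarrow(z, e_j) \geq \rho/\kappa_j > -\infty$. Either way, the conclusion holds, and the claim follows from the fact that the ELLC definition guarantees $\kappa_j \neq 0$ for every $j$.

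The proof is essentially a direct unpacking of definitions, so I do not expect a significant technical obstacle. The only subtlety worth mentioning is verifying that both $t$ and $t + \lambda e_j$ lie in the neighborhood $\mathcal{N}_z$ on which \eqref{eq:one-sided-Lip} is assumed to hold, which is immediate since $\mathcal{N}_z$ is open and $t \to z$, $\lambda \downarrow 0$. The proof will be two or three lines after setting up the substitution; the conceptual payoff is that the sign of $\kappa_j$ determines whether one controls the upper or lower Clarke directional derivative, which is exactly the ``either-sided'' character of the ELLC class and explains why LLC (where both directions can be controlled simultaneously) is strictly contained.
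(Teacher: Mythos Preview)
Your proposal is correct and follows essentially the same route as the paper's proof: substitute $z'-z''=\lambda e_j$ into the ELLC inequality, divide, and split on the sign of $\kappa_j$ to bound either the $\limsup$ or the $\liminf$. If anything, your version is slightly more careful, since you write the difference quotient at a moving base point $t\to z$ (as required by the Clarke definition in \eqref{eq:key-clarke-bound}) rather than at the fixed point $z$.
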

\begin{proof}
Setting $z'=z+\lambda e_j$ in \eqref{eq:one-sided-Lip}, where $\lambda \in \mathbb{R}$ is sufficiently small, we obtain $\kappa_{ij}(f_i(z+\lambda e_j)-f_i(z))\lambda  \leq \rho_i\lambda^2$. Then, dividing both sides by $\kappa_{ij} \ne 0$ and taking the $\limsup$ if $\kappa_{ij} >0$, or the $\liminf$ if $\kappa_{ij} <0$, from both sides when $\lambda \to 0$, imply that $f_C^\uparrow (z,e_j)$ is bounded from above or $f_C^\downarrow (z,e_j)$ is bounded from below, respectively (cf. \eqref{eq:key-clarke-bound}).  
\end{proof}

\sy{Then, 
by Proposition \ref{prop:ELLC}, we \syo{can} easily show the following.} 
\begin{cor}\label{cor:Clrake-Jacob-def}
Consider the mapping $f=[f_1 \dots f_{n_x}]^\top:\mathcal{Z} \subset \mathbb{R}^{n_z} \to \mathbb{R}^{n_x}$, where $\forall i \in \mathbb{N}_{n_x}$, $f_i$ is ELLC on $\mathcal{Z}$. We define upper and lower Clarke Jacobian matrices of $f$ at $z \in \mathcal{Z}$, \sy{$\overline{J}^f_C(z)=[(\overline{J}^f_C(z))_{ij}]$ and $\underline{J}^f_C(z)=[(\underline{J}^f_C(z))_{ij}]$, with the upper and lower partial Clarke derivatives  \moh{at point $z \in \mathcal{Z}$} for each $i \in \sy{\mathbb{N}_{n_x}},j \in  \sy{\mathbb{N}_{n_z}}$, respectively, defined as: 
\begin{align}\label{Clarke-Jacob-matrix}
(\overline{J}^f_C(z))_{ij} \triangleq f^\uparrow_{i,C}(z,e_j), \ (\underline{J}^f_C(z))_{ij} \triangleq f^\downarrow_{i,C}(z,e_j).
\end{align}
Then,} 
$\forall i \in \mo{\mathbb{N}_{n_x}}, \forall j \in \mo{\mathbb{N}_{n_z}} $, 
$\overline{J}^f_C(z)_{ij}$ is bounded from above or/and $\underline{J}^f_C(z)_{ij}$ is bounded \syo{from} below, \mo{i.e.,} 
\sy{we cannot simultaneously have} $(\overline{J}^f_C(z))_{ij}=\infty$ and $(\underline{J}^f_C(z))_{ij}=-\infty$. 
\end{cor}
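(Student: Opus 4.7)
The plan is to derive the claim directly from a componentwise application of Proposition \ref{prop:ELLC} combined with the entry-wise definitions of the Clarke Jacobian matrices in \eqref{Clarke-Jacob-matrix}. By hypothesis, each scalar component $f_i : \mathcal{Z} \to \mathbb{R}$ is ELLC on $\mathcal{Z}$, so Proposition \ref{prop:ELLC} (with $f$ replaced by $f_i$, and with the associated vector $\kappa_i$ whose entries are all non-zero by Definition \ref{defn:locally-Lip}) can be invoked independently for every $i \in \mathbb{N}_{n_x}$.

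First I would fix an arbitrary $z \in \mathcal{Z}$ and coordinate direction $e_j$ with $j \in \mathbb{N}_{n_z}$ and apply Proposition \ref{prop:ELLC} to conclude that either $f^\uparrow_{i,C}(z,e_j) < +\infty$ or $f^\downarrow_{i,C}(z,e_j) > -\infty$. The selection between the two cases is governed by the sign of the scalar $\kappa_{ij}$, exactly as in the proof of that proposition: dividing the ELLC inequality \eqref{eq:one-sided-Lip} through by $\kappa_{ij} \neq 0$ with $z' = z + \lambda e_j$ and passing to the limit $\lambda \to 0$ using $\limsup$ when $\kappa_{ij} > 0$ or $\liminf$ when $\kappa_{ij} < 0$ yields the desired one-sided bound on the difference quotient. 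I would then translate this scalar conclusion to matrix-entry form via the definitions $(\overline{J}^f_C(z))_{ij} \triangleq f^\uparrow_{i,C}(z,e_j)$ and $(\underline{J}^f_C(z))_{ij} \triangleq f^\downarrow_{i,C}(z,e_j)$, which immediately gives, for every pair $(i,j)$, either $(\overline{J}^f_C(z))_{ij} < +\infty$ or $(\underline{J}^f_C(z))_{ij} > -\infty$, thereby ruling out simultaneous $+\infty$ and $-\infty$ entries.

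There is no substantive obstacle here; the corollary is essentially a componentwise repackaging of Proposition \ref{prop:ELLC} into the Clarke Jacobian notation. The only bookkeeping subtleties worth flagging are that each component $f_i$ may come with its own pair of constants $\kappa_i, \rho_i$ from Definition \ref{defn:locally-Lip} (they need not be uniform in $i$), and that the standing non-vanishing requirement $\kappa_{ij} \neq 0$ is precisely what legitimizes the division step above and hence the clean dichotomy between the $\limsup$ and $\liminf$ cases.
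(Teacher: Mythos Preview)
Your proposal is correct and mirrors the paper's approach exactly: the paper simply states that the corollary follows easily from Proposition \ref{prop:ELLC}, and your argument is precisely the componentwise application of that proposition translated through the definitions in \eqref{Clarke-Jacob-matrix}.
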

Next, we \sy{provide a slight} modification of the results in \cite[Proposition 1.12]{demianov1995constructive}, which plays an important role in our main results later and its proof goes precisely \syo{along} 
the lines of the proof of \cite[Proposition 1.12]{demianov1995constructive}.
\begin{prop}\label{prop:clarke-sum}
Let $f:\mathcal{Z} \subseteq \mathbb{R}^{n_z} \to \mathbb{R}^{n_x}$ \sy{be decomposable into} 
$f=f^1+f^2$, where $f^1,f^2:\mathcal{Z} \subseteq \mathbb{R}^{n_z} \to \mathbb{R}$ and $f_i,f^1_i,f^2_i$, $\forall i \in \sy{\mathbb{N}_{n_x}}$, are ELLC. 
Then, $\forall z \in \mathcal{Z}, \forall i \in \sy{\mathbb{N}_{n_x}}, \forall j \in \sy{\mathbb{N}_{n_z}}$: 

\phantom{ }\hspace{0.1cm}\vspace*{-0.9cm}
\begin{align*}
(\overline{J}^f_C(z))_{ij} &\leq (\overline{J}^{f^1}_C(z))_{ij}+(\overline{J}^{f^2}_C(z))_{ij}, \\ \relax
(\underline{J}^f_C(z))_{i,j} &\geq (\underline{J}^{f^1}_C(z))_{ij}+(\underline{J}^{f^2}_C(z))_{ij}.
\end{align*} 
\begin{proof}
The results follow from \eqref{Clarke-Jacob-matrix} and the facts that 
$f^\uparrow_C(z,v) \leq f^{1\uparrow}_C(z,v)+f^{2\uparrow}_C(z,v)$ \cite[Chapter II, Proposition 1.12]{demianov1995constructive} and $f^\downarrow_C(z,v) =-f^\uparrow_C(z,-v)$ \cite[Chapter II, Proposition 1.7]{demianov1995constructive}, $\forall z \in \mathcal{Z}, \forall v \in \mathbb{R}^{n_z}$.
\end{proof}
\end{prop}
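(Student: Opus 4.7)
The plan is to reduce the matrix-level inequalities directly to the scalar-valued, directional-derivative versions that are already part of the classical Clarke nonsmooth calculus, and which the paper cites from Demianov--Rubinov (Chapter II, Propositions 1.7 and 1.12). Specifically, by definition \eqref{Clarke-Jacob-matrix}, the $(i,j)$ entries of the upper and lower Clarke Jacobians are nothing but the scalar Clarke upper and lower directional derivatives of $f_i$ at $z$ in the coordinate direction $e_j$. So the entire statement is really a coordinate-wise specialization of the subadditivity of $f_C^{\uparrow}$ and the matching superadditivity of $f_C^{\downarrow}$.

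First, I would handle the upper bound. Since $f_i = f_i^1 + f_i^2$ and each summand is ELLC (which, in view of Proposition~\ref{prop:ELLC} and the definition \eqref{eq:key-clarke-bound}, is enough to guarantee that the relevant one-sided directional derivatives are well-defined in each coordinate direction), I would apply the scalar subadditivity
\[
f^{\uparrow}_{i,C}(z,v)\le f^{1\uparrow}_{i,C}(z,v)+f^{2\uparrow}_{i,C}(z,v)
\]
with $v=e_j$, and then read off the inequality using \eqref{Clarke-Jacob-matrix}. Second, for the lower bound, I would use the duality identity $f^{\downarrow}_C(z,v)=-f^{\uparrow}_C(z,-v)$, apply the upper subadditivity to $-f^{\uparrow}_{i,C}(z,-e_j)$, and then convert the result back via the same duality to recover a lower-derivative statement at $+e_j$. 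The sign flip coming from the duality converts subadditivity into superadditivity, yielding the second inequality.

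The main conceptual obstacle is not algebraic but definitional: I must make sure the cited scalar results in \cite{demianov1995constructive} apply under our weaker ELLC assumption rather than under ordinary Lipschitz continuity, because the standard proofs of these properties assume the Clarke subdifferential is a nonempty, compact, convex set. By Corollary~\ref{cor:Clrake-Jacob-def}, however, at each $z$ and for each coordinate direction we have that either $(\overline{J}_C^f(z))_{ij}$ is finite from above or $(\underline{J}_C^f(z))_{ij}$ is finite from below, which is precisely what is needed for the one-sided $\limsup$/$\liminf$ manipulations in the scalar subadditivity argument to go through without collapsing into indeterminate $\infty-\infty$ expressions. I would therefore include a brief remark verifying that the usual proofs of Propositions~1.7 and 1.12 of \cite{demianov1995constructive} carry over verbatim in this broader ELLC setting, and then invoke them. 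Beyond that caveat, everything reduces to plugging $v=e_j$ into known identities.
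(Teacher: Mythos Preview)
Your proposal is correct and follows essentially the same approach as the paper: reduce to the scalar Clarke directional derivatives via \eqref{Clarke-Jacob-matrix}, invoke the subadditivity of $f_C^\uparrow$ from \cite[Chapter II, Proposition 1.12]{demianov1995constructive}, and obtain the lower inequality from the duality $f_C^\downarrow(z,v)=-f_C^\uparrow(z,-v)$ of \cite[Chapter II, Proposition 1.7]{demianov1995constructive}. Your added remark about checking that these scalar identities extend to the ELLC setting is a welcome bit of extra care that the paper only gestures at in passing.
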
   
It is worth noticing that when $f(\cdot)$ is differentiable, then $\nabla f(x) \in \partial_C f(x)$ and if $f(\cdot)$ is continuously differentiable or strictly differentiable, then $\partial_C f(x) =\{\nabla f(x)\}$. 
Now, we are ready to explain the notion of \emph{Clarke Jacobian sign-stability} through the following definition, which is a generalization of Jacobian sign-stability in \cite{yang2019sufficient} 
(\sy{cf.} Corollary \ref{cor:jss_tight}). 
\begin{defn}[Clarke Jacobian Sign-Stability]\label{defn:Jac_sign_stab}
A mapping $f
:\mathcal{Z} \subset \mathbb{R}^{n_z} \to \mathbb{R}^{n_x}$ is called Clarke Jacobian sign-stable (CJSS) over $\mathcal{Z}$, if $\forall z \in \mathcal{Z}, \forall i \in \sy{\mathbb{N}_{n_x}}, \forall j \in \sy{\mathbb{N}_{n_z}}$, 
\begin{align} \label{eq:CJSS}
(\overline{J}^f_C(z))_{ij}\leq 0 \ \lor \  (\underline{J}^f_C(z))_{ij}\geq 0.
\end{align}
\end{defn}
Finally, 
\sy{we present} an extension of Corollary \ref{cor:jss_tight}, 
which we will apply later in our derivations.
\begin{prop}\label{prop:CJSS_optimality}
\sy{\moh{Suppose} $f$ is ELLC and CJSS over $\mathbb{I}\mathcal{Z}$. Then, $\forall i \in \mathbb{N}_{n_x}, \forall j \in \mathbb{N}_{n_z}$, $f_i(\cdot)$ in either monotonically non-decreasing or monotonically non-increasing in its $j$-th argument $z_j$, over the entire domain $\mathbb{I}\mathcal{Z}$ and consequently, the optima of \eqref{eq:tight_dec_disc} and \eqref{eq:tight_dec} are attained at some vertices of $\mathbb{I}\mathcal{Z}$.}
\end{prop}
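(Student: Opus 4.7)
The plan is to reduce the claim to a one-dimensional monotonicity statement along each coordinate direction. Fix arbitrary $i\in\mathbb{N}_{n_x}$ and $j\in\mathbb{N}_{n_z}$, and for any fixing of the remaining coordinates $z_{-j}\in \prod_{k\neq j}[\underline{z}_k,\overline{z}_k]$, define the scalar restriction $g(t) \triangleq f_i(z_1,\ldots,z_{j-1},t,z_{j+1},\ldots,z_{n_z})$ on $[\underline{z}_j,\overline{z}_j]$. Using the standard unit vector $e_j$, the upper and lower Clarke directional derivatives of $g$ at $t$ coincide with $(\overline{J}^f_C(z))_{ij} = f^\uparrow_{i,C}(z,e_j)$ and $(\underline{J}^f_C(z))_{ij} = f^\downarrow_{i,C}(z,e_j)$, respectively, by \eqref{Clarke-Jacob-matrix}.

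Next, I would interpret the CJSS hypothesis \eqref{eq:CJSS} in the globally uniform sense (consistent with the JSS definition recalled in Corollary \ref{cor:jss_tight}): for each $(i,j)$, either $(\overline{J}^f_C(z))_{ij}\le 0$ for all $z\in\mathbb{I}\mathcal{Z}$, or $(\underline{J}^f_C(z))_{ij}\ge 0$ for all $z\in\mathbb{I}\mathcal{Z}$. In the first case, $g_C^\uparrow(t)\le 0$ on the entire interval $[\underline{z}_j,\overline{z}_j]$, and in the second case, $g_C^\downarrow(t)\ge 0$ on the same interval. Appealing to a nonsmooth monotonicity result for one-sided Clarke/Dini derivatives (e.g., along the lines of \cite[Chapter II]{demianov1995constructive}), one concludes that $g$ is non-increasing in the first case and non-decreasing in the second. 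Since $z_{-j}$ was arbitrary, this yields the claimed coordinate-wise monotonicity of $f_i$ in its $j$-th argument over the entire box $\mathbb{I}\mathcal{Z}$.

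For the vertex-attainment assertion, I would then argue by a straightforward coordinate-wise induction: since each $f_i$ is monotone in every $z_j$ separately over the box $\mathbb{I}\mathcal{Z}$, starting from any candidate minimizer (respectively, maximizer) we can successively push each coordinate $z_j$ to either $\underline{z}_j$ or $\overline{z}_j$ (whichever is consistent with the monotonicity direction of $f_i$ in $z_j$) without increasing (respectively, without decreasing) $f_i$, eventually reaching a vertex of $\mathbb{I}\mathcal{Z}$. For the continuous-time program \eqref{eq:tight_dec}, the constraint $\zeta_i=x_i$ simply removes the $i$-th coordinate from the induction, leaving a vertex of the remaining $(n_z-1)$-dimensional sub-box.

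The main obstacle is the passage from a one-sided Clarke derivative sign condition to pointwise monotonicity in the ELLC (as opposed to LLC) setting: the Clarke subdifferential of an ELLC function may be unbounded from one side, so Clarke's mean value theorem does not apply directly. The key observation that salvages the argument is that the ELLC inequality \eqref{eq:one-sided-Lip}, restricted to the line $z'=z+\lambda e_j$ exactly as in Proposition \ref{prop:ELLC}, controls precisely the one-sided rate of change of $g$ that is relevant to whichever branch of CJSS happens to hold. A standard Dini-type monotonicity argument (e.g., considering the perturbation $h(t)=g(t)\mp\epsilon t$ with $\epsilon>0$ and letting $\epsilon\downarrow 0$) then goes through without requiring two-sided Lipschitz control on $g$.
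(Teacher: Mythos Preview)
Your proposal is correct and follows essentially the same route as the paper: restrict to one coordinate direction, use the globally uniform sign of the Clarke directional derivative from CJSS to obtain coordinate-wise monotonicity of each $f_i$, and then push coordinates to the box boundaries to reach a vertex. The only cosmetic difference is that the paper packages the one-dimensional monotonicity step via a Clarke mean-value inequality (appealing to an argument identical to \cite[Chapter II, Theorem 1.3]{demianov1995constructive}) rather than your Dini-type $g(t)\mp\epsilon t$ perturbation, but both devices serve the same purpose and accommodate the ELLC setting in the same way.
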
 
\begin{proof}
\sy{For $i \in \mathbb{N}_{n_x}$, consider any arbitrary $z^1$ \moh{from the interior of $\mathbb{I}\mathcal{Z}$ and construct} $z^2=z^1+\lambda e_j \in \mathbb{I}\mathcal{Z}$, for some small enough $\lambda>0$. Then, by \sy{an identifical proof to} \cite[Chapter II, Theorem 1.3]{demianov1995constructive}, 
there exists a $ z_{\theta} \in \mathbb{I}\mathcal{Z}$ on the connecting line between $z^1$ and $z^2$, such that $\lambda f^\downarrow_{i,C}(z_{\theta},e_j)=f^\downarrow_{i,C}(z_{\theta},\lambda e_j) \leq f_i(z^2)-f_i(z^1) \leq f^\uparrow_{i,C}(z_{\theta},\lambda e_j) =\lambda f^\uparrow_{i,C}(z_{\theta},e_j)$, where the equalities hold by \cite[Chapter II, Proposition 1.5]{demianov1995constructive}. Using this and the CJSS assumption (cf. \eqref{eq:CJSS}), 
$f_i(z^2) \leq f_i(z_2)$ if $f^\uparrow_{i,C}(z,\lambda e_j)=(\overline{J}^f_C(z))_{ij}\leq 0$ holds for all $z \in \mathbb{I}\mathcal{Z}$ \moh{(including $z=z_{\theta}$)}, 
and \moh{similarly,} $f_i(z^2) \geq f_i(z_2)$ if $f^\downarrow_{i,C}(z,\lambda e_j)=(\underline{J}^f_C(z))_{ij}\geq 0$ holds for all $z \in \mathbb{I}\mathcal{Z}$ \moh{(including $z=z_{\theta}$)}. 
Hence, by moving 
\syo{along} the coordinate directions, one can always increase or decrease each of the $f_i$'s. \moh{Using} a similar argument, this result also holds for $\lambda<0$. So, the 
optimum for each $f_i$ is \moh{attained} at some vertices of the interval 
$\mathbb{I}\mathcal{Z}$.  }
\end{proof} 

\subsection{\sy{Modeling Framework and} Problem Statement}
We consider constrained dynamical systems of the form:
 \begin{align} \label{eq:main_sys}
x_t^+=\syo{\tilde{f}(x_t,u_t,w_t)\triangleq f(z_t),} \quad 
\syo{\mu(x_t,u_t)\triangleq \nu(x_t) \in \mathcal{Y}_t,}  
\end{align}
where $x_t^+=\dot{x}_t$ if \eqref{eq:main_sys} is a continuous-time \syo{system} and $x_t^+=x_{t+1}$ if \eqref{eq:main_sys} is a discrete-time system. 
$x_t \in \mathcal{X}
\subset \mathbb{R}^{n_x}$ is the state, \sy{$u_t \in \mathcal{U} \subset \mathbb{R}^{n_x}$ is the known input} and \syo{$w_t \in \mathcal{W} \subseteq \mathbb{I}\mathcal{W} \triangleq  [\underline{w} , \overline{w}]  \in \mathbb{IR}^{n_w}$} 
is the augmentation of all 
exogenous inputs, e.g., 
bounded disturbance/noise and internal uncertainties such as uncertain parameters, with known bounds $\underline{w},\overline{w}$, while 
$f 
: \mathcal{X} \times \mathcal{U} \times \mathcal{W}  \to \mathbb{R}^{n_x}$ \sy{and $\mu 
:  
\mathcal{X} \times \mathcal{U} 
\to \mathbb{R}^{n_\syo{y}}$} \sy{are} the nonlinear vector field and the observation/constraint mapping, respectively, \sy{while $\mathcal{Y}_t$  is the known or measured time-varying constraint/observation set.}  
\sy{The mapping $\mu(\cdot)$ and the set $\mathcal{Y}_t \subseteq [\underline{y}_t,\overline{y}_t]$ describe system constraints that can represent prior or additional knowledge about the system states, e.g., sensor observations or measurements with bounded noise, known state constraints, manufactured constraints from modeling redundancy \cite{shen2017rapid} (cf. Section \ref{ex:redundancy} for an example), etc.} 
\syo{For ease of exposition, we further define $f : \mathcal{Z} \triangleq \mathcal{X} \times \mathcal{W} 
\subset \mathbb{R}^{n_z} \to \mathbb{R}^{n_x}$ and $\nu : \mathcal{X}
\subset \mathbb{R}^{n_x} \to \mathbb{R}^{n_y}$ as in \eqref{eq:main_sys} that is implicitly dependent on $u_t$ with the augmented state $z_t\triangleq [x_t^\top  \ w_t^\top]^\top \in \mathcal{Z}$.} 
Further, we assume the following. 
\begin{assumption}\label{assum:ELLC}
The mappings $f$ and  \syo{$\nu$} 
are either-sided locally Lipschitz (ELLC; cf. Definition \ref{defn:locally-Lip}).
\end{assumption}
\begin{assumption}\label{assum:jacobian-bounds}
\sy{For the mappings $f$ and \syo{$\nu$}, there exist known bounds on their Clarke Jacobian matrices, $\overline{J}^f_C,\underline{J}^f_C \in \{\mathbb{R} \cup \pm \infty \} ^{n_x \times n_z}$ and $\overline{J}^{\syo{\nu}}_C,\underline{J}^{\syo{\nu}}_C \in \{\mathbb{R} \cup \pm \infty \}^\syo{n_{y} \times n_{x}}$, that satisfy 
 \begin{align*}
\hspace{-0.1cm}\begin{array}{c}
((J^f_C(z))_{ij}\leq ({\overline{J}^f_C})_{ij} < \infty) \vee ((J^f_C(z))_{ij}\geq ({\underline{J}^f_C})_{ij}>-\infty), \\
 ((J^{\syo{\nu}}_C(\syo{x}))_{pq}\leq ({\overline{J}^{\syo{\nu}}_C})_{pq} < \infty) \vee ((J^{\syo{\nu}}_C(\syo{x}))_{pq}\geq ({\underline{J}^{\syo{\nu}}_C})_{pq}\hspace{-0.05cm}> \hspace{-0.075cm}-\infty),
 \end{array}
 \end{align*}  
 $\forall z \in \mathbb{I}\mathcal{Z}$, $\forall  \syo{x\in \mathbb{I}\mathcal{X}}$, $\forall \syo{i,q} \in \mathbb{N}_{n_x}$, $\forall j \in \mathbb{N}_{n_z}$, $\forall p \in \mathbb{N}_{n_\syo{y}}$.} 
\end{assumption}
\sy{Under the above modeling framework and assumptions, this paper seeks to find}
tight and tractable (i.e., computable) remainder-form upper and lower decomposition functions and their induced inclusion functions (cf. Definitions \ref{def:inc_func}, \ref{defn:remainder_dec}, \ref{def:one_side_dec} and 
\sy{Proposition} \ref{cor:dec_inc}) 
as well as 
\sy{to develop} set inversion algorithms \sy{based on (mixed-monotone) decomposition functions}.

\sy{ \begin{problem}[Decomposition Functions]  \label{prob:construction}
Suppose Assumptions \ref{assum:ELLC} and \ref{assum:jacobian-bounds} hold. Construct and \sy{quantify the tightness (via the metric \eqref{eq:metric})} 
of remainder-form decomposition functions, \sy{by solving the following subproblems:} 
\begin{enumerate}[\one.1]
\item Given an 
ELLC vector field $f:\mathcal{Z}\subset \mathcal{R}^{n_z} 
\to \mathbb{R}^{n_x}$, 
construct a tractable family of   mixed-monotone remainder-form (i.e., additive) decomposition functions for $f(\cdot)$. \label{item:construct}  
\item Derive lower and upper bounds for the \emph{tightness} 
(quantified via \eqref{eq:metric}) of 
the family of remainder-form decomposition functions obtained in  1.\ref{item:construct}.\label{item:bounds} 
\item Find the tightest decomposition function(s) among the family of remainder-form decomposition functions 
obtained in 1.\ref{item:construct} and compare them with the decomposition function 
in \cite{yang2019sufficient} (cf. Proposition \ref{prop:Liren_dec}), and natural, centered and mixed-centered natural inclusions (cf. Proposition \ref{prop:natural_inclusions}).\label{prob:T_MM}
\end{enumerate}
\end{problem}
\begin{problem}[Set Inversion Algorithm] \label{prob:set-inv}
Suppose Assumptions \ref{assum:ELLC} and \ref{assum:jacobian-bounds} hold. Given a prior/propagated interval $\mathbb{I}\mo{\mathcal{X}}^p_\mo{t} \in \mathbb{IR}^{n_\mo{x}}$, a constraint/observation function and set, $\mu(\mo{x}_t,u_t)$ and $\mathcal{Y}_t\subseteq [\underline{y}_t,\overline{y}_t]$ with  known $u_t$, 
develop an algorithm 
\sy{to find} 
an interval superset of all $\mo{x}_t$ that are 
compatible with $\mu(\cdot)$, $[\underline{y}_t,\overline{y}_t]$ and $\mathbb{I}\mo{\mathcal{X}}^p_\mo{t}$, 
i.e., to find the updated/refined interval $\mathbb{I}\mo{\mathcal{X}}^u_t$ such that
\begin{align}\label{eq:set_inv}
\{\mo{x} \in \mathbb{I}\mo{\mathcal{X}}^p_\syo{t} \mid  
\mu(\mo{x},u_t) \in  [\underline{y}_t,\overline{y}_t]\} \subseteq  \mathbb{I}\mo{\mathcal{X}}^u_t\subseteq \mathbb{I}\mo{\mathcal{X}}^p_\mo{t}.
\end{align} 
\end{problem}}
\sy{In the context of constrained reachability analysis and interval observer design (cf. Section \ref{sec:application}), the solution of the generalized embedding system (cf. Definition \ref{def:embedding}) based on} 
the decomposition functions obtained from solving Problem \ref{prob:construction} 
provides the unconstrained reachable set (or propagated set), $\mathbb{I}\mo{\mathcal{X}}_t^p$, while the set inversion algorithm in Problem \ref{prob:set-inv} finds the constrained reachable set (or updated set) $\mathbb{I}\mo{\mathcal{X}}_t^u$. 
\section{Main Results}\label{sec:main}
To address the aforementioned problems, we first describe \sy{our proposed construction approach to find a tractable family of mixed-monotone remainder-form decomposition functions. Then, by characterizing their tightness, we can determine the tightest decomposition function among the proposed family. Further, 
we present a novel set inversion algorithm that serves as an alternative and improves on existing approaches, e.g., SIVIA in \cite[Chapter 3]{jaulinapplied}) and $\mathcal{I}_G$ in \cite[Algorithm 1]{yang2020accurate}).} 
\subsection{Remainder-Form Decomposition Functions}
\sy{To solve Problem 
1.\ref{item:construct}, we provide a constructive procedure for computing the family of remainder-form decomposition functions in a tractable manner (i.e., in closed-form). Intuitively, our approach is based on the idea of decomposing each ELLC function $f$ into the remainder/additive form, i.e., $f=g+h$, such that $g$ is a CJSS function (cf. Definition \ref{defn:Jac_sign_stab}; so that Proposition \ref{prop:CJSS_optimality} applies) 
by ``shifting'' the Clarke directional gradients of $f$ and accounting for the ``error'' using $h$. 
Since there are several ``shift'' directions, we obtain a family of decomposition functions.
%
Note that the construction below is to be independently performed for each dimension of the ELLC function $f$; hence, without loss of generality, we only consider a scalar ELLC function $f_i$ throughout this subsection.}

\sy{
\begin{thm}[A Family of Remainder-Form Decomposition Functions] \label{thm:fdf}
 Consider an ELLC vector field $f_i 
 : \mathcal{Z} 
 \subset \mathbb{R}^{n_z} \to \mathbb{R}$ 
 and suppose that Assumptions \ref{assum:ELLC} and \ref{assum:jacobian-bounds} hold. Then, $f_i(\cdot)$ admits 
 \sy{a} family of  mixed-monotone remainder-form decomposition functions \sy{denoted as $\{f_{d,i}(z,\hat{z};\mathbf{m},h(\cdot))\}_{\mathbf{m} \in \mathbf{M}_i, h(\cdot)\in \mathcal{H}_{\mathbf{M}_i}}$ that is parameterized by a supporting vector $\mathbf{m} \in \mathbf{M}_i$ and an ELLC remainder function $h (\cdot)\in \mathcal{H}_{\mathbf{M}_i}$, where}
\begin{align}\label{eq:decomp1} 
\hspace{-.1cm}f_{d,i}(z,\hat{z};\mathbf{m},h(\cdot))\hspace{-.05cm}=\hspace{-.05cm} h(\zeta_{\mathbf{m}}(\hat{z},{z}))\hspace{-.1cm}+\hspace{-.1cm}f_i(\zeta_{\mathbf{m}}(z,\hat{z})\hspace{-.05cm}) \hspace{-.05cm}-\hspace{-.05cm}h(\zeta_{\mathbf{m}}(z,\hat{z})),\hspace{-.1cm}
\end{align}
with   $\zeta_{\mathbf{m}}(z,\hat{z})\hspace{-0.1cm}=\hspace{-0.1cm}[\zeta_{\mathbf{m},1}(z,\hat{z}),\dots,\zeta_{\mathbf{m},n_z}(z,\hat{z})]^\top$\hspace{-0.05cm}, where $\forall j \hspace{-0.1cm}\in\hspace{-0.1cm} \mathbb{N}_{n_z}$,
\begin{align}\label{eq:corner} 
 \begin{array}{rl}
 &\zeta_{\mathbf{m},j}(z,\hat{z})=\begin{cases} \hat{z}_j, \ \text{if} \ \mathbf{m}_j \geq \max(({\overline{J}^f_C})_{ij},{0}), \\
z_j, \ \text{if} \ \mathbf{m}_j \leq \min(({\underline{J}^f_C})_{ij},{0}), 
 \end{cases}
 \end{array}
 \end{align} 
and the set of supporting vectors $\mathbf{M}_i$ is defined as:
 \begin{align}
    \begin{array}{rl}
 \mathbf{M}_i \triangleq & \hspace{-0.2cm}\{\mathbf{m} \in \mathbb{R}^ {n_z} \ | \  \mathbf{m}_j \geq \max(({\overline{J}^f_C})_{ij},{0})  \ \lor \\ 
  & \ \ \mathbf{m}_j \leq \min(({\underline{J}^f_C})_{ij},{0}),
 \forall j \in \mathbb{N}_{n_z} \}
    \end{array}\label{eq:Mj}
  \end{align}
  if \eqref{eq:main_sys} is a discrete-time system and with 
  \begin{align}
   \begin{array}{rl}
 &\zeta_{\mathbf{m},j}(z,\hat{z})=\begin{cases} \hat{z}_j, \ \text{if} \ \mathbf{m}_j \geq \max(({\overline{J}^f_C})_{ij},{0}), \\
z_j, \ \text{if} \ \mathbf{m}_j \leq \min(({\underline{J}^f_C})_{ij},{0}),\\
x_i, \ \text{if} \ j=i, 
 \end{cases}
 \end{array}\\
    \begin{array}{l}
  \mathbf{M}_i \triangleq \{\mathbf{m} \in \mathbb{R}^ {n_z} \ | \  \mathbf{m}_j \geq \max(({\overline{J}^f_C})_{ij},{0}) \ \lor \\ \quad \
  \mathbf{m}_j \leq \min(({\underline{J}^f_C})_{ij},{0}) ,
 \forall j \in \mathbb{N}_{n_z}, j \ne i, \mathbf{m}_i=0 \} 
    \end{array} \label{eq:Mj_CT}
  \end{align}
   if \eqref{eq:main_sys} is a continuous-time system, while the set of remainder functions $\mathcal{H}_{\mathbf{M}_i}$ is the family of all ELLC remainder functions whose Clarke sub-differential set over $\mathcal{Z}$ (cf. Definition \ref{defn:gen_grad}) is a subset of ${\mathbf{M}_i}$ given by:
    \begin{align} \label{eq:HM}
 \mathcal{H}_{\mathbf{M}_i} \hspace{-0.05cm}\triangleq\hspace{-0.05cm} \{{h}:\mathcal{Z} \to \mathbb{R} \ | \ [\underline{J}^{{h}}_C(z),\overline{J}^{{h}}_C(z)] \subseteq \mathbf{M}_i , \forall z \in \mathcal{Z} \}.
 \end{align}
 The resulting family of decomposition-based inclusion functions is given by:
 \begin{align} \label{eq:TR}
 T^{f_{d,i}}_{\underline{\mathbf{m}},\overline{\mathbf{m}}}(\mathbb{I}\mathcal{Z}) \triangleq [{f}_{d,i}(\underline{z},\overline{z};\underline{\mathbf{m}},\underline{h}(\cdot)),{f}_{d,i}(\overline{z},\underline{z};\overline{\mathbf{m}},\overline{h}(\cdot))] , 
\end{align} 
for all $\underline{\mathbf{m}},\overline{\mathbf{m}}\in \mathbf{M}_i$, 
and the corresponding $\underline{h}(\cdot),\overline{h}(\cdot)\in \mathcal{H}_{\mathbf{M}_i}$.  
 \end{thm}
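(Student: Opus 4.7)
The approach is to recognize \eqref{eq:decomp1} as being built from an additive split $f_i = g + h$ with $g := f_i - h$, so that the proposed decomposition rewrites as
\[
f_{d,i}(z,\hat z;\mathbf{m},h) = g(\zeta_{\mathbf{m}}(z,\hat z)) + h(\zeta_{\mathbf{m}}(\hat z, z)).
\]
The diagonal condition (i) of Definition \ref{defn:dec_func} is then immediate from the observation that $\zeta_{\mathbf{m}}(z,z) = z$, since both branches in \eqref{eq:corner} collapse to the same value when $z = \hat z$, and in the continuous-time case the $j=i$ coordinate is already pinned to $x_i = z_i = \hat z_i$. All effort is concentrated on the monotonicity requirements (ii)--(iii).

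The core step is to extract coordinatewise monotonicity of $g$ and $h$ on $\mathbb{I}\mathcal{Z}$, with opposite signs dictated by $\mathbf{m}$. Applying Proposition \ref{prop:clarke-sum} to the split $g = f_i + (-h)$ and combining with the reflection identity $(\overline{J}^{-h}_C)_{ij} = -(\underline{J}^h_C)_{ij}$ (which follows from $\partial_C(-h) = -\partial_C h$, \cite[Ch.~II, Prop.~1.7]{demianov1995constructive}), one obtains the one-sided bounds
\[
(\overline{J}^g_C(z))_{ij} \leq (\overline{J}^f_C)_{ij} - (\underline{J}^h_C(z))_{ij}, \qquad (\underline{J}^g_C(z))_{ij} \geq (\underline{J}^f_C)_{ij} - (\overline{J}^h_C(z))_{ij}.
\]
The next step is a case split, per coordinate $j$, on which branch of $\mathbf{M}_i$ contains $\mathbf{m}_j$ (and, correspondingly, the Clarke Jacobian of $h$, via $h \in \mathcal{H}_{\mathbf{M}_i}$). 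In the upper branch, $\underline{J}^h_C(z)_{ij} \geq \max((\overline{J}^f_C)_{ij},0)$ forces $(\overline{J}^g_C(z))_{ij} \leq 0$ and simultaneously $(\underline{J}^h_C(z))_{ij} \geq 0$, so $g$ is non-increasing and $h$ non-decreasing in $z_j$; the lower branch gives the symmetric conclusion. Hence $g$ and $h$ are each CJSS coordinate by coordinate on $\mathbb{I}\mathcal{Z}$, and Proposition \ref{prop:CJSS_optimality} delivers dimension-wise monotonicity.

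With these monotonicities in hand, properties (ii)--(iii) are read off directly from \eqref{eq:corner}: the construction places $\hat z_j$ inside the non-increasing summand and $z_j$ inside the non-decreasing summand in the upper branch, with the mirror-image substitution in the lower branch, so both summands are simultaneously non-decreasing in $z$ and non-increasing in $\hat z$. The continuous-time variant \eqref{eq:Mj_CT} freezes coordinate $i$ to $x_i$, trivializing monotonicity in $z_i,\hat z_i$ and yielding the off-diagonal form of (ii) required in the continuous-time part of Definition \ref{defn:dec_func}. The induced inclusion function \eqref{eq:TR} then follows immediately from Proposition \ref{cor:dec_inc}.

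The main delicacy I anticipate is the bookkeeping of the Clarke subdifferential calculus: Proposition \ref{prop:clarke-sum} bounds upper (respectively lower) Clarke Jacobians of a sum only from above (respectively below), so obtaining a one-sided bound of the correct sign on the Clarke Jacobian of $g = f_i - h$ requires first rewriting $g = f_i + (-h)$ and then invoking the reflection identity for $\partial_C(-h)$. Once this is set up, the rest of the argument is a pure case split on the two branches of $\mathbf{M}_i$ and does not require any optimization or iterative step, which is consistent with the tractability claim of the theorem.
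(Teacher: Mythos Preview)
Your proposal is correct and follows essentially the same route as the paper: rewrite $f_{d,i}$ via $g = f_i - h$, use Proposition~\ref{prop:clarke-sum} together with the reflection $\partial_C(-h) = -\partial_C h$ to deduce coordinatewise sign-stability of $g$ and $h$ (with opposite signs determined by the branch of $\mathbf{M}_i$), and then read off the monotonicity of \eqref{eq:decomp1} from the way \eqref{eq:corner} routes $z_j$ versus $\hat z_j$ into the increasing versus decreasing summand. The paper packages the Clarke-calculus step as a separate lemma (Lemma~\ref{lem:gh_aligned}) and verifies monotonicity via a sequential coordinate-by-coordinate construction of intermediate points $z^1,\dots,z^\tau,\dots,\tilde z$, whereas you argue it more directly; but the underlying mechanism is identical.
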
}
 
 It may be worth noting that the small difference in the definitions of 
$\mathbf{M}$ for discrete-time versus continuous-time systems in Theorem \ref{thm:fdf} 
originates from the subtle difference between the definitions of decomposition functions for discrete-time and continuous-time systems (cf. Definition \ref{defn:dec_func}). 
 \sy{To prove the above theorem, we first prove the following lemma.
 
\begin{lem} \label{lem:gh_aligned}
 Any remainder function $h(\cdot) \in \mathcal{H}_{\mathbf{M}_i}$ (cf. \eqref{eq:HM}) is CJSS and the function $g_i(\cdot) \triangleq f_i(\cdot)-h(\cdot)$ is also CJSS. Moreover, the pair $(g_i(\cdot),-h(\cdot))$ is aligned, 
 i.e., $\forall j \in \mathbb{N}_{n_z}$, 
 \begin{enumerate}[(i)]
 \item $(\overline{J}^{g_i}_C(z))_{j}\leq 0$ \syo{if and only if} 
 $(\underline{J}^h_C(z))_{j}\geq 0$, or
 \item $(\underline{J}^{g_i}_C(z))_{j}\geq 0$ \syo{if and only if}  
 $(\overline{J}^h_C(z))_{j}\leq 0$.
 \end{enumerate}
 \end{lem}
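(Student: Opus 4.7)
The plan is to unpack the membership condition of $\mathcal{H}_{\mathbf{M}_i}$ in~\eqref{eq:HM} pointwise and then combine it with the Clarke sum rule of Proposition~\ref{prop:clarke-sum}. First, I would fix $h\in\mathcal{H}_{\mathbf{M}_i}$, $z\in\mathcal{Z}$ and $j\in\mathbb{N}_{n_z}$. By~\eqref{eq:HM}, every vector in the componentwise interval $[\underline{J}^{h}_{C}(z),\overline{J}^{h}_{C}(z)]$ must lie in $\mathbf{M}_i$, which by~\eqref{eq:Mj}/\eqref{eq:Mj_CT} forces one of two branches for its $j$-th coordinate: either (Case~1) $(\underline{J}^{h}_{C}(z))_{j}\geq \max((\overline{J}^{f}_{C})_{ij},0)\geq 0$, or (Case~2) $(\overline{J}^{h}_{C}(z))_{j}\leq \min((\underline{J}^{f}_{C})_{ij},0)\leq 0$. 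In either branch the whole interval $[(\underline{J}^{h}_{C}(z))_{j},(\overline{J}^{h}_{C}(z))_{j}]$ has a fixed sign, so $h$ is CJSS in the sense of Definition~\ref{defn:Jac_sign_stab}.

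Second, I would write $g_i=f_i+(-h)$ and apply Proposition~\ref{prop:clarke-sum}, together with the standard identity $f^{\downarrow}_{C}(z,v)=-f^{\uparrow}_{C}(z,-v)$ specialized to $v=e_j$ (so that $(\overline{J}^{-h}_{C}(z))_{j}=-(\underline{J}^{h}_{C}(z))_{j}$ and $(\underline{J}^{-h}_{C}(z))_{j}=-(\overline{J}^{h}_{C}(z))_{j}$), to obtain
\begin{align*}
(\overline{J}^{g_i}_{C}(z))_{j} &\leq (\overline{J}^{f_i}_{C}(z))_{j}-(\underline{J}^{h}_{C}(z))_{j},\\
(\underline{J}^{g_i}_{C}(z))_{j} &\geq (\underline{J}^{f_i}_{C}(z))_{j}-(\overline{J}^{h}_{C}(z))_{j}.
\end{align*}
Combined with Assumption~\ref{assum:jacobian-bounds}, Case~1 then yields $(\overline{J}^{g_i}_{C}(z))_{j}\leq (\overline{J}^{f}_{C})_{ij}-\max((\overline{J}^{f}_{C})_{ij},0)\leq 0$ alongside $(\underline{J}^{h}_{C}(z))_{j}\geq 0$, establishing both sides of~(i). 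Symmetrically, Case~2 gives $(\underline{J}^{g_i}_{C}(z))_{j}\geq 0$ alongside $(\overline{J}^{h}_{C}(z))_{j}\leq 0$, establishing both sides of~(ii). In each branch the sign of the $j$-th coordinate of the Clarke Jacobian of $g_i$ matches the direction in which $-h$ is monotone, which is precisely the claimed alignment; as a by-product, $g_i$ is CJSS.

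The main subtlety I foresee is the ``if and only if'' phrasing in~(i) and~(ii). Since the two branches above are disjoint whenever $(\overline{J}^{f}_{C})_{ij}$ and $(\underline{J}^{f}_{C})_{ij}$ strictly straddle zero, exactly one of~(i)/(ii) is the ``active'' branch at each $(z,j)$, with both sides of the active iff true and both sides of the inactive iff false, so the logical equivalence holds branchwise. The degenerate case where $(\overline{J}^{f}_{C})_{ij}\leq 0$ or $(\underline{J}^{f}_{C})_{ij}\geq 0$ uniformly on $\mathcal{Z}$ admits both branches simultaneously, which only makes both~(i) and~(ii) hold and is therefore harmless. Beyond this case-by-case bookkeeping, the argument uses only~\eqref{eq:HM}, Proposition~\ref{prop:clarke-sum}, and Assumption~\ref{assum:jacobian-bounds}, so no further technical ingredient is required.
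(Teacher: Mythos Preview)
Your proposal is correct and follows essentially the same approach as the paper: establish CJSS of $h$ directly from the definition of $\mathcal{H}_{\mathbf{M}_i}$, then apply the Clarke sum rule (Proposition~\ref{prop:clarke-sum}) to $g_i=f_i+(-h)$ and split into the two branches dictated by~\eqref{eq:Mj}/\eqref{eq:Mj_CT}. Your version is in fact slightly more careful, since you make the sign flip $(\overline{J}^{-h}_C(z))_j=-(\underline{J}^{h}_C(z))_j$ explicit and address the ``if and only if'' by a disjointness/degenerate-case argument, whereas the paper simply states that the reverse direction ``can be similarly deduced.''
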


\begin{proof} Since $\mathbf{m} \in \mathbf{M}_i$, by construction of $\mathbf{M}_i$ (cf. \syo{\eqref{eq:Mj}, \eqref{eq:Mj_CT}}), $h(\cdot) \in \mathcal{H}_{\mathbf{M}_i}$ is a CJSS function. Next, by applying Proposition \ref{prop:clarke-sum} to $g_i(\cdot) \triangleq f_i(\cdot)-h(\cdot)$, we know that 
$\forall z \in \mathcal{Z},
 \forall j \in \mathbb{N}_{n_z}$: 
\begin{align}
\label{eq:CJSS-} 
(\underline{J}^{g_i}_C(z))_{j} &\leq (\underline{J}^{f}_C(z))_{ij}-(\underline{J}^{h}_C(z))_{j}, \\ \relax
\label{eq:CJSS+}
(\overline{J}^{g_i}_C(z))_{j} &\geq (\overline{J}^{f}_C(z))_{ij}-(\overline{J}^{h}_C(z))_{j}.
\end{align} 
Then, since $h(\cdot) \in \mathcal{H}_{\mathbf{M}_i}$, according to \syo{\eqref{eq:corner}}
--\eqref{eq:HM}, 
we consider the following two cases (the case with $\mathbf{m}_i=0$ for continuous-time systems is trivial):
\begin{enumerate}[(i)]
\item $\forall z \in \mathcal{Z},(\underline{J}^{h}_C(z))_{j} \geq  \max(({\overline{J}^f_C})_{ij},{0}) \geq 0$. 
Then, from \eqref{eq:CJSS-}, we obtain 
$\forall z \in \mathcal{Z},(\underline{J}^{g_i}_C(z))_{j} \leq 0 \Rightarrow {(\underline{J}^{g_i}_C})_{j} \leq 0$.  
\item $\forall z \in \mathcal{Z},(\overline{J}^{h}_C(z))_{j} \leq  \min(({\underline{J}^f_C})_{ij},{0}) \leq 0$. Then, from 
\eqref{eq:CJSS+}, we have 
$\forall z \in \mathcal{Z},(\overline{J}^{g_i}_C(z))_{j} \geq 0 \Rightarrow ({\overline{J}^{g_i}_C})_{j} \geq 0$.
\end{enumerate}
The reverse can be similarly deduced. Finally, since $({\overline{J}^{g_i}_C})_{j} \geq 0$ or ${(\underline{J}^{g_i}_C})_{j} \leq 0$ holds, $g_i(\cdot)$ is CJSS by Definition \ref{defn:Jac_sign_stab}.
\end{proof}

\begin{rem}
Since the pair $(g_i,-h)$ is aligned 
and $f_i=g_i+h$, the proposed remainder-form decomposition function can also be viewed as the decomposition of $f_i$ into a \emph{difference of monotone functions}, which is similar in spirit to difference of convex functions in DC programming, e.g., \cite{alamo2008set,horst1999dc}.
\end{rem}}


 \begin{proof}[Proof of Theorem \ref{thm:fdf}]
\sy{ Armed by Lemma \ref{lem:gh_aligned}, we now prove that \eqref{eq:decomp1} is mixed-monotone (cf.  Definition \ref{defn:dec_func}). Having defined $g_i(\cdot) \triangleq f_i(\cdot)-h(\cdot)$, \eqref{eq:decomp1} can be rewritten as
  \begin{align}\label{eq:decomposition2}
f_{d,i}(z,\hat{z};\mathbf{m},h)= h(\zeta_{\mathbf{m}}(\hat{z},{z}))+g_i(\zeta_{\mathbf{m}}({z},\hat{z})). 
 \end{align}
 First, it directly follows from \eqref{eq:corner} that $\zeta_{\mathbf{m}}(z,{z})=z$ and $f_{d,i}(z,{z};\mathbf{m},h(\cdot))=f_i(z)$. Hence, it remains to show that  
$f_{d,i}(z,\hat{z};\mathbf{m},h(\cdot))$ 
 is non-decreasing in $z$ and non-increasing in $\hat{z}$. To do this, consider
$z,\tilde{z},\hat{z} \in \mathcal{Z}$, where $\tilde{z} \geq z$. 
Let $j_0 \in \mathbb{N}_{n_z}$, and suppose case (i) in the proof of Lemma \ref{lem:gh_aligned} holds for dimension $j_0$. Then, by the  first case in  \eqref{eq:corner},
\begin{align}\label{eq:h_corner}
\begin{array}{c}
\zeta_{\mathbf{m},j_0}(\hat{z},\tilde{z})=\tilde{z}_{j_0} \geq {z}_{j_0}=\zeta_{\mathbf{m},j_0}(\hat{z},z), \\
\zeta_{\mathbf{m},j_0}(\tilde{z},\hat{z})=\hat{z}_{j_0}=\zeta_{\mathbf{m},j_0}(z,\hat{z}).
 \end{array}
  \end{align}
   Next, we define ${z}^1 \in \mathcal{Z}$ as follows: ${z}^1_{j_0}=\tilde{z}_{j_0}$ and ${z}^1_{j}=z_j, \forall j \ne j_0$. Thus, $z^1\ge z$, and by \eqref{eq:h_corner}, $\zeta_{\mathbf{m}}(z^1,\hat{z})=\zeta_{\mathbf{m}}(z,\hat{z})$,  $\zeta_{\mathbf{m},j}(\hat{z},z^1) = \zeta_{\mathbf{m},j}(\hat{z},z), \forall j\neq j_0$ and $\zeta_{\mathbf{m},j_0}(\hat{z},z^1) \ge \zeta_{\mathbf{m},j_0}(\hat{z},z)$.
 Moreover, by case (i) in the proof of Lemma \ref{lem:gh_aligned} and Proposition \ref{prop:CJSS_optimality}, $h(\cdot)$ is non-decreasing in the dimension $j_0$ and thus, $h(\zeta_{\mathbf{m}}(\hat{z},{z}^1))\ge h(\zeta_{\mathbf{m}}(\hat{z},{z}))$ and $g_i(\zeta_{\mathbf{m}}({z}^1,\hat{z}))=g_i(\zeta_{\mathbf{m}}({z},\hat{z}))$. Then, it follows from \eqref{eq:decomposition2} that
$f_{d,i}(z^1,\hat{z};\mathbf{m},h(\cdot))
\geq 
f_{d,i}(z,\hat{z};\mathbf{m},h(\cdot))$.
  Repeating this procedure sequentially for all dimensions $j$ for which case (i) in Lemma \ref{lem:gh_aligned} holds (where $\tau$ is the size of this set), we obtain: 
  \begin{gather}\label{eq:ineq1}
  \begin{array}{c}
  f_{d,i}(z^\tau \hspace{-.cm}, \hspace{-.0cm}\hat{z};\mathbf{m},h(\cdot)) \hspace{-.cm}\geq  \hspace{-.cm} f_{d,i}(z^{\tau-1} \hspace{-.cm},\hspace{-.0cm}\hat{z};\mathbf{m},h(\cdot))  
  \hspace{-.cm}\geq \hspace{-.cm} \dots  \hspace{-.cm}\\
\hspace{1.25cm}  \geq \hspace{-.cm} f_{d,i}(z^1 \hspace{-.cm},\hspace{-.05cm}\hat{z};\mathbf{m},h(\cdot))  \hspace{-.cm}\geq  \hspace{-.cm}f_{d,i}(z,\hat{z};\mathbf{m},h(\cdot)).
\end{array}
  \end{gather}

Next, we consider the rest of the dimensions $j'$ that satisfy case (ii) in Lemma \ref{lem:gh_aligned}. It follows from the second case in \eqref{eq:corner}  that for such a dimension $j_0'\in\mathbb{N}_{n_z}$, 
%
  \begin{align}\label{eq:h_corner2}
\begin{array}{c}
 \zeta_{\mathbf{m},{j'_0}}(\hat{z},\tilde{z})=\hat{z}_{j'_0}=\zeta_{\mathbf{m},{j'_0}}(\hat{z},z^\tau), \\
\zeta_{\mathbf{m},{j'_0}}(\tilde{z},\hat{z})=\tilde{z}_{j'_0} \geq {z}^\tau_{j'_0}=\zeta_{\mathbf{m},{j'_0}}(z^\tau,\hat{z}).
 \end{array}
  \end{align}
  Repeating a similar procedure as for case (i), we define ${z}^{\tau+1} \in \mathcal{Z}$ as follows: ${z}^{\tau+1}_{j'_0}=\tilde{z}_{j'_0}$ and ${z}^{\tau+1}_{j}=z^\tau_j, \forall j \ne j'_0$. Thus, $z^{\tau+1}\ge z^\tau$, and by \eqref{eq:h_corner}, $\zeta_{\mathbf{m}}(\hat{z},z^{\tau+1})=\zeta_{\mathbf{m}}(\hat{z},z^\tau)$,  $\zeta_{\mathbf{m},j}(z^{\tau+1},\hat{z}) = \zeta_{\mathbf{m},j}(z^\tau,\hat{z}), \forall j\neq j_0'$ and $\zeta_{\mathbf{m},j_0'}(z^{\tau+1},\hat{z}) \ge \zeta_{\mathbf{m},j_0'}(z^\tau,\hat{z})$.
 Moreover, by case (ii) in the proof of Lemma \ref{lem:gh_aligned} and Proposition \ref{prop:CJSS_optimality}, $g_i(\cdot)$ is non-decreasing in the dimension $j_0'$ and thus, $g_i(\zeta_{\mathbf{m}}({z}^{\tau+1},\hat{z}))\ge g_i(\zeta_{\mathbf{m}}({z}^\tau,\hat{z}))$ and $h(\zeta_{\mathbf{m}}(\hat{z},{z}^{\tau+1}))=h(\zeta_{\mathbf{m}}(\hat{z},{z}^\tau))$. Then, it follows from \eqref{eq:decomposition2} that
$f_{d,i}(z^{\tau+1},\hat{z};\mathbf{m},h(\cdot))
\geq 
f_{d,i}(z^\tau,\hat{z};\mathbf{m},h(\cdot))$.

Repeating this procedure sequentially for all dimensions $j$ for which case (ii) in Lemma \ref{lem:gh_aligned} holds, we obtain: 
  \begin{align}\label{eq:ineq2}
  \begin{array}{c}
  f_{d,i}(z^\tau,\hat{z};\mathbf{m},h(\cdot)) \leq f_{d,i}(z^{\tau+1},\hat{z};\mathbf{m},h(\cdot)) \leq \dots \\
  \hspace{0.75cm} \leq f_{d,i}(\tilde{z},\hat{z};\mathbf{m},h(\cdot)), 
  \end{array}
  \end{align}
  where the last term is 
  $ f_{d,i}(\tilde{z},\hat{z};\mathbf{m},h(\cdot))$ since there exist only two possible cases (i) or (ii) for each dimension. Combining \eqref{eq:ineq1} and \eqref{eq:ineq2} yields $ f_{d,i}(\tilde{z},\hat{z};\mathbf{m},h(\cdot)) \geq  f_{d,i}({z},\hat{z};\mathbf{m},h(\cdot))$, which means that $f_{d,i}$ is non-decreasing in its first argument. A\syo{n almost identical} 
  argument shows that $f_{d,i}$ is non-increasing in its second argument. 
Thus, $f_{d,i}$ is mixed-monotone.}
 \end{proof} 

Theorem \ref{thm:fdf} mathematically introduces a family of decomposition functions (cf. Definition \ref{defn:dec_func}), but the results are not yet tractable (cf. Definition \ref{defn:tractability}), since to build such a family, we have to search over $\mathbf{M}_i$ (cf. \eqref{eq:Mj} and \eqref{eq:Mj_CT}), which is an unbounded and infinite set, as well as over $\mathcal{H}_{\mathbf{M}_i}$ (cf. \eqref{eq:HM}), which is an infinite-dimensional space of functions. 
To overcome this problem, we propose tractable upper and lower decomposition functions that only need to search over a \emph{finite} set of supporting vector $\mathbf{M}_i^c \subset \mathbf{M}_i$ with the choice of linear remainder functions $h(\zeta)=\langle \mathbf{m},\zeta\rangle = \mathbf{m}^\top \zeta$, and prove that these tractable decomposition functions are the tightest among the family of decomposition functions in Theorem \ref{thm:fdf}.

\begin{thm}[Tight and Tractable Remainder-Form Upper and Lower Decomposition Functions] \label{thm:tractableDF}
 Consider an ELLC vector field $f_i 
 : \mathcal{Z} 
 \subset \mathbb{R}^{n_z} \to \mathbb{R}$ 
 and let Assumptions \ref{assum:ELLC} and \ref{assum:jacobian-bounds} hold. Then, the tightest tractable (mixed-monotone) remainder-form upper and lower decomposition functions with $z \ge \hat{z}$ are 
 \begin{align}\label{eq:UL_DF}
 \begin{array}{l}
\overline{f}_{d,i}(z,\hat{z})= 
\min \limits_{\mathbf{m} \in \mathbf{M}^c} f_i(\zeta_{\mathbf{m}}^+) +\mathbf{m}^\top(\zeta_{\mathbf{m}}^- -\zeta_{\mathbf{m}}^+),\\
\underline{f}_{d,i}(\hat{z},z)= 
\max \limits_{\mathbf{m} \in \mathbf{M}^c} f_i(\zeta_{\mathbf{m}}^-)+\mathbf{m}^\top(\zeta_{\mathbf{m}}^+ -\zeta_{\mathbf{m}}^-),
\end{array}
\end{align}
where $\zeta_{\mathbf{m}}^+\triangleq \zeta_{\mathbf{m}}(z,\hat{z})$ and $\zeta_{\mathbf{m}}^-\triangleq \zeta_{\mathbf{m}}(\hat{z},z)$  with   $\zeta_{\mathbf{m}}(\cdot,\cdot)$ defined in \eqref{eq:corner} 
and the \emph{finite} set of supporting vectors $\mathbf{M}_i^c$ defined as:
 \begin{align}
    \begin{array}{rl}
 \mathbf{M}_i^c \triangleq & \hspace{-0.2cm}\{\mathbf{m} \in \mathbb{R}^ {n_z} \ | \ \mathbf{m}_j = \max(({\overline{J}^f_C})_{ij},{0})  \ \lor \\ 
  & \ \mathbf{m}_j = \min(({\underline{J}^f_C})_{ij},{0}),
 \forall j \in \mathbb{N}_{n_z} \}
    \end{array}\label{eq:Mc}
  \end{align}
  if \eqref{eq:main_sys} is a discrete-time system and
  \begin{align}\label{eq:Mc_CT}
    \begin{array}{l}
  \mathbf{M}_i^c \triangleq \{\mathbf{m} \in \mathbb{R}^ {n_z} \ | \ \mathbf{m}_j = \max(({\overline{J}^f_C})_{ij},{0})  \ \lor \\ \quad \
 \mathbf{m}_j =\min(({\underline{J}^f_C})_{ij},{0}),
 \forall j \in \mathbb{N}_{n_z}, j \ne i, \mathbf{m}_i=0 \} 
    \end{array}
  \end{align}   if \eqref{eq:main_sys} is a continuous-time system.
  
  Moreover, we call the resulting inclusion function for an interval domain $\mathbb{I}\mathcal{Z}=[\underline{z},\overline{z}]$ the remainder-form inclusion function $T^{f_{d,i}}_R\triangleq [\underline{f}_{d,i}(\underline{z},\overline{z}),\overline{f}_{d,i}(\overline{z},\underline{z})]$ (cf. Definition \ref{defn:remainder_dec}).
 \end{thm}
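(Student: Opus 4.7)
The plan is to establish the theorem in three stages: (i) validity of $\overline{f}_{d,i}$ and $\underline{f}_{d,i}$ as mixed-monotone upper and lower decomposition functions, (ii) tractability, and (iii) tightness within the family produced by Theorem~\ref{thm:fdf}. For validity, I would first observe that for every $\mathbf{m} \in \mathbf{M}_i^c \subset \mathbf{M}_i$, the linear map $h_{\mathbf{m}}(\zeta) \triangleq \mathbf{m}^\top \zeta$ satisfies $\partial_C h_{\mathbf{m}}(z) = \{\mathbf{m}\} \subseteq \mathbf{M}_i$ at every $z \in \mathcal{Z}$, so $h_{\mathbf{m}} \in \mathcal{H}_{\mathbf{M}_i}$. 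Substituting $h = h_{\mathbf{m}}$ in \eqref{eq:decomp1} collapses to exactly the expression $f_i(\zeta_{\mathbf{m}}^+) + \mathbf{m}^\top(\zeta_{\mathbf{m}}^- - \zeta_{\mathbf{m}}^+)$ appearing in \eqref{eq:UL_DF}, and Theorem~\ref{thm:fdf} guarantees that each such $f_{d,i}(\cdot,\cdot;\mathbf{m},h_{\mathbf{m}})$ is mixed-monotone. Applying the intersection property (Corollary~\ref{cor:min_max_dec}) to the finite family indexed by $\mathbf{M}_i^c$ then shows that the pointwise minimum (respectively maximum) in \eqref{eq:UL_DF} is itself a mixed-monotone upper (respectively lower) decomposition function. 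Tractability is immediate since $|\mathbf{M}_i^c| \le 2^{n_z}$ and each evaluation is an inner product plus one evaluation of $f_i$, with no differentiation or iteration (cf. Definition~\ref{defn:tractability}).

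For tightness, I would show that for any $\mathbf{m}' \in \mathbf{M}_i$ and any $h \in \mathcal{H}_{\mathbf{M}_i}$, the value $f_{d,i}(z,\hat{z};\mathbf{m}',h)$ is no smaller than $\overline{f}_{d,i}(z,\hat{z})$ for $z \ge \hat{z}$ (with a symmetric argument for $\underline{f}_{d,i}$). Given $\mathbf{m}'$, I would associate the corresponding vertex $\mathbf{m} \in \mathbf{M}_i^c$ by setting its $j$-th entry equal to the boundary of the half-line that $\mathbf{m}'_j$ occupies, i.e., $\mathbf{m}_j = \max((\overline{J}^f_C)_{ij},0)$ when $\mathbf{m}'_j$ lies in the upper half-line and $\mathbf{m}_j = \min((\underline{J}^f_C)_{ij},0)$ otherwise (with the additional $\mathbf{m}_i = 0$ convention in the continuous-time case). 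Since \eqref{eq:corner} determines $\zeta_{\mathbf{m}}^\pm$ only through which half-line each coordinate of $\mathbf{m}$ occupies, this construction yields $\zeta_{\mathbf{m}}^\pm = \zeta_{\mathbf{m}'}^\pm$, and after canceling the common $f_i(\zeta_{\mathbf{m}}^+)$ the tightness claim reduces to the scalar inequality
\[
\mathbf{m}^\top(\zeta_{\mathbf{m}}^- - \zeta_{\mathbf{m}}^+) \;\le\; h(\zeta_{\mathbf{m}}^-) - h(\zeta_{\mathbf{m}}^+).
\]

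I would establish this inequality by telescoping along the axis-aligned path $\xi^0 = \zeta_{\mathbf{m}}^+$, $\xi^j = \xi^{j-1} + (\zeta_{\mathbf{m},j}^- - \zeta_{\mathbf{m},j}^+)\, e_j$, and applying the Clarke mean value bound used in the proof of Proposition~\ref{prop:CJSS_optimality} leg-by-leg. On a case-(i) leg the increment is non-negative and $h \in \mathcal{H}_{\mathbf{M}_i}$ forces $\underline{J}^h_C(z_\theta)_j \ge \max((\overline{J}^f_C)_{ij},0) = \mathbf{m}_j$ at the intermediate point $z_\theta$; on a case-(ii) leg the increment is non-positive and the same membership forces $\overline{J}^h_C(z_\theta)_j \le \min((\underline{J}^f_C)_{ij},0) = \mathbf{m}_j$, which after multiplication by the non-positive increment (using the identity $h^\downarrow_C(z,v) = -h^\uparrow_C(z,-v)$) produces the same directional inequality $h(\xi^j)-h(\xi^{j-1}) \ge \mathbf{m}_j(\zeta_{\mathbf{m},j}^- - \zeta_{\mathbf{m},j}^+)$. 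Summing over $j$ yields the required bound. The main technical obstacle is the disconnected structure of $\mathbf{M}_i$ (a union of two half-lines per coordinate): along each leg of the telescoping path the Clarke directional derivative of $h$ must consistently lie in the half-line matching $\mathbf{m}_j$, so some care is needed—invoking upper semicontinuity of $\partial_C h$ on the connected path segment, or refining the vertex selection per-leg to track the branch that $h$ actually occupies—to rule out branch switching and thereby close the argument.
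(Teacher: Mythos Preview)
Your approach matches the paper's: validity via Corollary~\ref{cor:min_max_dec}, and tightness by first reducing $\mathbf{M}_i$ to the finite vertex set $\mathbf{M}_i^c$ (the paper's Lemma~\ref{lem:upper_lowwer_df}, identical to your vertex-selection argument since $\zeta_{\mathbf{m}}$ depends only on the half-line each $\mathbf{m}_j$ occupies) and then reducing $\mathcal{H}_{\mathbf{M}_i}$ to linear remainders (the paper's Lemma~\ref{lem:mm_dec}). The only technical difference is cosmetic: the paper applies the Clarke mean value theorem once on the full segment from $\zeta_{\mathbf{m}}^+$ to $\zeta_{\mathbf{m}}^-$ to write $h(\zeta_{\mathbf{m}}^-)-h(\zeta_{\mathbf{m}}^+)=\langle\xi,\zeta_{\mathbf{m}}^- - \zeta_{\mathbf{m}}^+\rangle$ for some $\xi\in\mathbf{M}_i$, whereas you telescope axis-by-axis; both routes yield the same per-coordinate inequality.

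On your branch-switching concern: you are right to flag it, and in fact the paper's proof of Lemma~\ref{lem:mm_dec} silently assumes the branch of $\xi_j$ coincides with that of $\mathbf{m}_j$ when it invokes \eqref{eq:corner}. The resolution is simpler than the per-leg USC argument you sketch, though. Because the interval box $[\underline{J}_C^h(z),\overline{J}_C^h(z)]$ must lie inside the disconnected set $\mathbf{M}_i$, each coordinate of $\partial_C h$ is confined to a single half-line globally; and for $f_{d,i}(\cdot,\cdot;\mathbf{m}',h)$ to be mixed-monotone---i.e., to belong to the Theorem~\ref{thm:fdf} family at all---$\mathbf{m}'_j$ must occupy that same half-line (otherwise the monotonicity in the proof of Theorem~\ref{thm:fdf} fails). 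Hence your chosen vertex $\mathbf{m}\in\mathbf{M}_i^c$ is automatically branch-compatible with $h$, and the telescoping bound goes through without any further refinement.
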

 
 We will prove the above theorem in two steps with the help of the following lemmas, where the two steps show that we can restrict our search for the tightest upper and lower decomposition functions to a \emph{finite} set of supporting vectors and the set of \emph{linear} remainder functions, respectively, without introducing any conservatism.

\begin{lem}[Finite Set of Supporting Vectors] \label{lem:upper_lowwer_df}
Suppose the assumptions in Theorem \ref{thm:tractableDF} hold. Then, $\forall z,\hat{z} \in \mathcal{Z}$, $\forall h(\cdot) \in \mathcal{H}_{\mathbf{M}_i}$ and for both $\opt \in \{\min,\max\}$, 
\begin{align*}
\opt\limits_{\substack{\mathbf{m} \in \mathbf{M}_i,\\ h(\cdot) \in \mathcal{H}_{\mathbf{M}_i}}}  f_{d,i}(z,\hat{z};\mathbf{m},h(\cdot))=  \opt \limits_{\substack{\mathbf{m} \in \mathbf{M}_i^c,\\ h(\cdot) \in \mathcal{H}_{\mathbf{M}_i}} } f_{d,i}(z,\hat{z};\mathbf{m},h(\cdot)), 
\end{align*}
where $f_{d,i}(z,\hat{z};\mathbf{m},h(\cdot))$ is defined in \eqref{eq:decomp1}, $\mathbf{M}_i$ in 
\eqref{eq:Mj} or \eqref{eq:Mj_CT},  $\mathbf{M}_i^c$ in 
\eqref{eq:Mc} or \eqref{eq:Mc_CT}, and $\mathcal{H}_{\mathbf{M}_i}$ in \eqref{eq:HM}. 
 \end{lem}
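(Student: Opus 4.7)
\medskip

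\noindent\textbf{Proof proposal.} The plan is to exploit the fact that $f_{d,i}(z,\hat{z};\mathbf{m},h(\cdot))$ in \eqref{eq:decomp1} depends on $\mathbf{m}$ only through the corner selector $\zeta_{\mathbf{m}}$, and that, by \eqref{eq:corner}, each coordinate $\zeta_{\mathbf{m},j}$ is a piecewise-constant function of $\mathbf{m}_j$ taking only the values $z_j$ or $\hat{z}_j$ (or $x_i$ when $j=i$ in the continuous-time case). Since $\mathbf{M}_i^c\subseteq\mathbf{M}_i$, the inequality
\[
\opt_{\mathbf{m}\in\mathbf{M}_i^c,\, h\in\mathcal{H}_{\mathbf{M}_i}} f_{d,i}(z,\hat{z};\mathbf{m},h) \;\succcurlyeq\; \opt_{\mathbf{m}\in\mathbf{M}_i,\, h\in\mathcal{H}_{\mathbf{M}_i}} f_{d,i}(z,\hat{z};\mathbf{m},h)
\]
(with $\succcurlyeq$ denoting $\geq$ for $\opt=\min$ and $\leq$ for $\opt=\max$) is immediate. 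The substance is the reverse direction.

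To obtain it, I will define a componentwise projection $\pi:\mathbf{M}_i\to\mathbf{M}_i^c$ as follows. For each $j\in\mathbb{N}_{n_z}$ (with $j\neq i$ in the continuous-time case), if $\mathbf{m}_j\geq\max((\overline{J}^f_C)_{ij},0)$, set $\pi(\mathbf{m})_j=\max((\overline{J}^f_C)_{ij},0)$; and if $\mathbf{m}_j\leq\min((\underline{J}^f_C)_{ij},0)$, set $\pi(\mathbf{m})_j=\min((\underline{J}^f_C)_{ij},0)$; in the continuous-time case, additionally set $\pi(\mathbf{m})_i=0$. By construction $\pi(\mathbf{m})\in\mathbf{M}_i^c$, and $\mathbf{m}_j$ and $\pi(\mathbf{m})_j$ lie in the same branch of the disjunction in \eqref{eq:corner}, so $\zeta_{\pi(\mathbf{m})}(z,\hat{z})=\zeta_{\mathbf{m}}(z,\hat{z})$ and $\zeta_{\pi(\mathbf{m})}(\hat{z},z)=\zeta_{\mathbf{m}}(\hat{z},z)$. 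Substituting into \eqref{eq:decomp1} therefore yields
\[
f_{d,i}(z,\hat{z};\pi(\mathbf{m}),h(\cdot))=f_{d,i}(z,\hat{z};\mathbf{m},h(\cdot)),\qquad \forall\,h(\cdot)\in\mathcal{H}_{\mathbf{M}_i}.
\]
Hence every feasible pair $(\mathbf{m},h)$ for the left-hand optimization admits the feasible pair $(\pi(\mathbf{m}),h)$ for the right-hand one with identical objective value, which gives the opposite inequality and closes the proof for both $\opt\in\{\min,\max\}$.

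The main obstacle I foresee is resolving the ambiguity of $\pi$ when both disjuncts in \eqref{eq:Mj} (or \eqref{eq:Mj_CT}) apply to a coordinate $j$, which can happen only if $\max((\overline{J}^f_C)_{ij},0)=\min((\underline{J}^f_C)_{ij},0)=0$ (the inequality $\underline{J}^f_C\leq\overline{J}^f_C$ forces the common value to be zero). In that degenerate case either branch yields $\pi(\mathbf{m})_j=0$, so the value-preservation identity above still holds up to any tie-breaking rule; outside this degenerate case the two branches are mutually exclusive and $\pi$ is unambiguous. All other book-keeping is routine, and the continuous-time coordinate $j=i$ is trivial because $\zeta_{\mathbf{m},i}=x_i$ is independent of $\mathbf{m}$.
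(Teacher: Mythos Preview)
Your proposal is correct and follows essentially the same approach as the paper: both define a componentwise projection from $\mathbf{M}_i$ to $\mathbf{M}_i^c$ (the paper calls it $\tilde{\mathbf{m}}$, you call it $\pi(\mathbf{m})$), observe that $\zeta_{\mathbf{m}}=\zeta_{\tilde{\mathbf{m}}}$ because the corner selector depends only on which branch of the disjunction each $\mathbf{m}_j$ satisfies, and conclude that the decomposition-function values coincide. Your version is slightly more careful in spelling out the trivial direction, the continuous-time coordinate $j=i$, and the degenerate case where both disjuncts hold, but these are cosmetic refinements of the same argument.
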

 \begin{proof}
 We consider $\mathbf{m} \in \mathbf{M}_i$ and construct $\tilde{\mathbf{m}} \in \mathbf{M}_i^c$ as follows for all $j \in \mathbb{N}_{n_z}$: $\tilde{\mathbf{m}}_j =\max(({\overline{J}^{f}_C})_{ij},0)$ if $\mathbf{m}_j  \geq \max(({\overline{J}^{f}_C})_{ij},0)$ and $\tilde{\mathbf{m}}_j =\min(({\underline{J}^{f}_C})_{ij},0)$ if $\mathbf{m}_j  \leq \min(({\underline{J}^{f}_C})_{ij},0)$. Then, it can be easily verified from \eqref{eq:corner} that $\forall z^1, z^2 \in \mathcal{Z}, {\zeta}_{\mathbf{m}}(z^1,z^2)={\zeta}_{\tilde{\mathbf{m}}}(z^1,z^2)$, and hence by \eqref{eq:decomp1}, $f_{d,i}(z^1,z^2;\mathbf{m},h(\cdot))=f_{d,i}(z^1,z^2;\tilde{\mathbf{m}},h(\cdot))\mo{,\forall h(\cdot) \in \mathcal{H}_{\mathbf{M}_i}}$. Hence, for any $\mathbf{m} \in \mathbf{M}_i$, there exists $\tilde{\mathbf{m}} \in \mathbf{M}_i^c$ that admits an equivalent decomposition function and correspondingly, the optimization over $\mathbf{M}_i$ and $\mathbf{M}_i^c$ are equivalent.  
%
 \end{proof}

\begin{lem}[Linear Remainder Functions] \label{lem:mm_dec} 
Suppose the assumptions in Theorem \ref{thm:tractableDF} hold. Then, $\forall z,\hat{z} \in \mathcal{Z}, z\ge \hat{z}$, 
\begin{align}
 \hspace{-0.15cm}\begin{array}{l}
 \min \limits_{\substack{\mathbf{m} \in \mathbf{M}_i^c, \\ h(\cdot) \in \mathcal{H}_{\mathbf{M}_i}}}\hspace{-0.1cm} f_{d,i}(z,\hat{z};\mathbf{m},h(\cdot))= 
\min \limits_{\mathbf{m} \in \mathbf{M}^c} f_i(\zeta_{\mathbf{m}}^+)+\mathbf{m}^\top(\zeta_{\mathbf{m}}^- -\zeta_{\mathbf{m}}^+), 
 \\
 \max \limits_{\substack{\mathbf{m} \in \mathbf{M}_i^c, \\ h(\cdot) \in \mathcal{H}_{\mathbf{M}_i}}} \hspace{-0.1cm} f_{d,i}(\hat{z},z;\mathbf{m},h(\cdot)) = 
\max \limits_{\mathbf{m} \in \mathbf{M}^c} f_i(\zeta_{\mathbf{m}}^-)+\mathbf{m}^\top(\zeta_{\mathbf{m}}^+-\zeta_{\mathbf{m}}^-),\vspace{-0.2cm}
\end{array}\label{eq:linearopt}
\end{align}
where $\zeta_{\mathbf{m}}^+\triangleq \zeta_{\mathbf{m}}(z,\hat{z})$ and $\zeta_{\mathbf{m}}^-\triangleq \zeta_{\mathbf{m}}(\hat{z},z)$ with $f_{d,i}(z,\hat{z};\mathbf{m},h(\cdot))$ is defined in \eqref{eq:decomp1}, $\zeta_{\mathbf{m}}(\cdot,\cdot)$ defined in \eqref{eq:corner}, $\mathbf{M}_i^c$ in 
\eqref{eq:Mc} or \eqref{eq:Mc_CT}, and $\mathcal{H}_{\mathbf{M}_i}$ in \eqref{eq:HM}. 
%
\end{lem}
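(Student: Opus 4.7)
The plan is to establish both equalities via a single key inequality and a saturation argument. Observe first that the linear function $h_{\mathbf{m}}(\zeta) \triangleq \mathbf{m}^\top \zeta$ lies in $\mathcal{H}_{\mathbf{M}_i}$ for every $\mathbf{m} \in \mathbf{M}_i^c$ (its Clarke sub-differential is the singleton $\{\mathbf{m}\} \subseteq \mathbf{M}_i^c \subseteq \mathbf{M}_i$), and substituting $h = h_{\mathbf{m}}$ into the remainder-form \eqref{eq:decomp1} yields $f_i(\zeta^+_{\mathbf{m}}) + \mathbf{m}^\top(\zeta^-_{\mathbf{m}} - \zeta^+_{\mathbf{m}})$ directly. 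Hence, for the $\min$ direction the LHS is at most each RHS summand, and for the $\max$ direction (noting that $f_{d,i}(\hat{z},z;\mathbf{m},h) = f_i(\zeta^-_{\mathbf{m}}) - [h(\zeta^-_{\mathbf{m}}) - h(\zeta^+_{\mathbf{m}})]$) the LHS is at least each RHS summand.

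Next, for the reverse inequalities, I would fix any mixed-monotone-consistent pair $(\mathbf{m},h) \in \mathbf{M}_i^c \times \mathcal{H}_{\mathbf{M}_i}$ (i.e., one satisfying the branch-matching implicit in Theorem~\ref{thm:fdf}) and prove the single key bound
\begin{align*}
  h(\zeta^-_{\mathbf{m}}) - h(\zeta^+_{\mathbf{m}}) \;\ge\; \mathbf{m}^\top(\zeta^-_{\mathbf{m}} - \zeta^+_{\mathbf{m}}).
\end{align*}
This bound alone forces $f_{d,i}(z,\hat{z};\mathbf{m},h) \ge f_i(\zeta^+_{\mathbf{m}}) + \mathbf{m}^\top(\zeta^-_{\mathbf{m}} - \zeta^+_{\mathbf{m}})$ for the $\min$ and the reverse for $f_{d,i}(\hat{z},z;\mathbf{m},h)$ in the $\max$, so taking $\min$ or $\max$ over $\mathbf{m}$ closes both identities. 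My approach to the bound is to traverse the axis-aligned path from $\zeta^+_{\mathbf{m}}$ to $\zeta^-_{\mathbf{m}}$ one coordinate at a time and invoke Clarke's generalized mean-value theorem (\cite[Ch.~II, Thm.~1.3]{demianov1995constructive}, already used in the proof of Proposition~\ref{prop:CJSS_optimality}) on each coordinate segment to express the per-coordinate difference as $\xi^{(j)}_j (\zeta^-_{\mathbf{m},j} - \zeta^+_{\mathbf{m},j})$ for some $\xi^{(j)} \in \partial_C h(\cdot)$.

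The desired bound then follows coordinate-wise. On branch~A, where $\mathbf{m}_j = \max((\overline{J}^f_C)_{ij},0)$ in \eqref{eq:Mc}, we have $\zeta^-_{\mathbf{m},j} - \zeta^+_{\mathbf{m},j} = z_j - \hat{z}_j \ge 0$ and $\xi^{(j)}_j \ge \mathbf{m}_j \ge 0$ (from $\partial_C h(\cdot) \subseteq \mathbf{M}_i$ restricted to the aligned branch), so $\xi^{(j)}_j (\zeta^-_{\mathbf{m},j} - \zeta^+_{\mathbf{m},j}) \ge \mathbf{m}_j (\zeta^-_{\mathbf{m},j} - \zeta^+_{\mathbf{m},j})$; on branch~B both factors flip sign and the same inequality survives. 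Summing over coordinates---in continuous time, coordinate $i$ drops out because $\mathbf{m}_i = 0$ in \eqref{eq:Mc_CT} forces $\zeta^-_{\mathbf{m},i} = \zeta^+_{\mathbf{m},i} = x_i$, contributing zero to both sides---yields the key inequality, and the linear $h_{\mathbf{m}}$ saturates it with equality.

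The main obstacle I anticipate is handling the set-valued, possibly unbounded Clarke sub-differential of a non-smooth ELLC $h$. This hinges on a \emph{branch-persistence} property: if any component $j$ of $\partial_C h(z)$ attempted to cross the gap between the two half-lines of $\mathbf{M}_{i,j}$ as $z$ varies, the upper semicontinuity and convexity of $\partial_C h$ would force its $j$-th projection at the crossing point to span the gap, violating $[\underline{J}^h_C(z), \overline{J}^h_C(z)] \subseteq \mathbf{M}_i$ in \eqref{eq:HM}. Consequently, the branch of $h$ in each coordinate is uniform over $\mathcal{Z}$ and, by the alignment with $\mathbf{m}$, coincides with the branch of $\mathbf{m}_j$, so the \emph{bounded} side of the Clarke gradient guaranteed by Corollary~\ref{cor:Clrake-Jacob-def} is precisely the one active in the mean-value estimate, ensuring the bound is finite and as claimed.
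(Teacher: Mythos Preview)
Your proposal is correct and takes essentially the same approach as the paper: apply the Clarke mean value theorem to $h(\zeta^-_{\mathbf{m}}) - h(\zeta^+_{\mathbf{m}})$, perform a per-coordinate sign analysis on the two branches of $\mathbf{M}_i$, and note that the linear choice $h(\zeta)=\mathbf{m}^\top\zeta$ saturates the resulting bound. The only differences are cosmetic: the paper applies the mean value theorem once to the full vector increment $\zeta^-_{\mathbf{m}}-\zeta^+_{\mathbf{m}}$ (obtaining a single $\xi\in[\underline{J}^h_C,\overline{J}^h_C]\subset\mathbf{M}_i$) rather than traversing an axis-aligned path coordinate by coordinate as you do, and your explicit branch-persistence discussion is in fact more careful than the paper, which silently identifies the branch of $\xi_j$ with that of $\mathbf{m}_j$ when invoking \eqref{eq:corner}.
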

\begin{proof}
Consider any $h(\cdot)\in \mathcal{H}_{\mathbf{M}_i}$, $\mathbf{m}\in \mathbf{M}_i$ and from \eqref{eq:decomp1},
\begin{align}
f^+_{d,i}&\triangleq f_{d,i}(z,\hat{z};\mathbf{m},h(\cdot))=f_i(\zeta_\mathbf{m}^+)+\Delta h_\mathbf{m}, \label{eq:fd+}\\
 f^-_{d,i}&\triangleq f_{d,i}(\hat{z},z;\mathbf{m},h(\cdot))=f_i(\zeta_\mathbf{m}^-)-\Delta h_\mathbf{m}, \label{eq:fd-}
\end{align} 
where $\Delta h_\mathbf{m} \triangleq h(\zeta_{\mathbf{m}}^-)-h(\zeta_{\mathbf{m}}^+)$.  Then, applying the Clarke Mean Value Theorem \cite[Chapter II, Theorem 1.3]{demianov1995constructive} to $\Delta h_\mathbf{m}$, 
there exists $\xi \in [{\underline{J}^{h}_C},{\overline{J}^{h}_C}] \subset \mathbf{M}_i$ such that
 \begin{align}\label{eq:mvt}
 \Delta h_{\mathbf{m}}= \langle \xi,(\zeta_{\mathbf{m}}^+ - \zeta_{\mathbf{m}}^-)\rangle.
 \end{align}
Since $\xi \in \mathbf{M}_i$, by \eqref{eq:Mj} and \eqref{eq:Mj_CT}, we know that $ \xi_j \leq \min(({\underline{J}^f_C})_{ij},{0})$ or $\xi_j \geq \max(({\overline{J}^f_C})_{ij},{0})$, $\forall j \in \mathbb{N}_{n_z}$ (excluding $j=i$ for continuous-time systems where $\xi_j=0$).  

 Then, similar to the proof of Lemma \ref{lem:gh_aligned}, $\forall j \in \mathbb{N}_{n_x}$, we can consider two cases corresponding to the two cases in \eqref{eq:corner}: 
 \begin{enumerate}[(i)]
\item $\xi_{j} \geq  \max(({\overline{J}^f_C})_{ij},{0}) \geq 0$: 
From \eqref{eq:corner}, $\zeta_{\mathbf{m},j}^-=\zeta_{\mathbf{m},j}(\hat{z},{z})={z}_j$, $\zeta_{\mathbf{m},j}^+=\zeta_{\mathbf{m},j}(z,\hat{z})=\hat{z}_j$ and ${z}_j\ge \hat{z}_j$, thus, we have $\zeta_{\mathbf{m},j}^- - \zeta_{\mathbf{m},j}^+ \ge 0$ and   $\xi_{j}(\zeta_{\mathbf{m},j}^- - \zeta_{\mathbf{m},j}^+) \geq  \max(({\overline{J}^f_C})_{ij},{0})(\zeta_{\mathbf{m},j}^- - \zeta_{\mathbf{m},j}^+) $. Then, the minimum of $f^+_{d,i}$ in \eqref{eq:fd+} and the maximum of $f^-_{d,i}$ in \eqref{eq:fd-} are attained in \eqref{eq:linearopt} when $\xi_j=\max(({\overline{J}^f_C})_{ij},{0}) \in (\mathbf{M}^c_i)_j$.

\item $\xi_j \leq  \min(({\underline{J}^f_C})_{ij},{0}) \leq 0$: From \eqref{eq:corner}, $\zeta_{\mathbf{m},j}^-=\zeta_{\mathbf{m},j}(\hat{z},{z})=\hat{z}_j$, $\zeta_{\mathbf{m},j}^+=\zeta_{\mathbf{m},j}(z,\hat{z})={z}_j$ and ${z}_j\ge \hat{z}_j$, thus, we have $\zeta_{\mathbf{m},j}^- - \zeta_{\mathbf{m},j}^+ \le 0$ and   $\xi_{j}(\zeta_{\mathbf{m},j}^- - \zeta_{\mathbf{m},j}^+) \geq  \min(({\underline{J}^f_C})_{ij},{0})(\zeta_{\mathbf{m},j}^- - \zeta_{\mathbf{m},j}^+) $. Then, the minimum of $f^+_{d,i}$ in \eqref{eq:fd+} and the maximum of $f^-_{d,i}$ in \eqref{eq:fd-} are attained in \eqref{eq:linearopt} when $\xi_j=\min(({\underline{J}^f_C})_{ij},{0}) \in (\mathbf{M}^c_i)_j$.
\end{enumerate}
 
Finally, we can restrict our search to the class of linear remainder functions $h(\zeta)=\langle \mathbf{m}, \zeta \rangle = \mathbf{m}^\top\zeta$ with $\mathbf{m}\in \mathbf{M}^c_i$, since it can achieve the optima in \eqref{eq:linearopt}. 
\end{proof}

\begin{proof}[Proof of Theorem \ref{thm:tractableDF}] First, by repeatedly applying Corollary \ref{cor:min_max_dec} on all the decomposition functions in the family \eqref{eq:decomp1} and the fact that the upper and lower decomposition functions can be optimized independently, it can be seen that the tightest upper and lower decomposition functions with $z,\hat{z}\in \mathcal{Z}, z \ge \hat{z}$ are 
 \begin{align*}
 \begin{array}{l}
\overline{f}_{d,i}(z,\hat{z})= \min\limits_{\substack{\mathbf{m} \in \mathbf{M}_i, h(\cdot) \in \mathcal{H}_{\mathbf{M}_i}}}  f_{d,i}(z,\hat{z};\mathbf{m},h(\cdot)),\\
\underline{f}_{d,i}(\hat{z},z)=\max\limits_{\substack{\mathbf{m} \in \mathbf{M}_i, h(\cdot) \in \mathcal{H}_{\mathbf{M}_i}}}  f_{d,i}(\hat{z},z;\mathbf{m},h(\cdot)),
\end{array}
\end{align*}
Then, by Lemmas \ref{lem:upper_lowwer_df} and \ref{lem:mm_dec}, we obtain the tractable and tight upper and lower decomposition functions in \eqref{eq:UL_DF}.
\end{proof}

Theorem \ref{thm:tractableDF} guarantees that in order to obtain the tightest possible decomposition function in the form of \eqref{eq:decomp1}, it is sufficient to only search over a \emph{finite} set of supporting vectors $\mathbf{M}_i^\mo{c}$ and the class of \emph{linear} remainder functions with gradients from this same set $\mathbf{M}_i^\mo{c}$, i.e., $h(\zeta)=\moh{\langle}\mathbf{m},\zeta\moh{\rangle}=\mathbf{m}^\top \zeta, \forall\mathbf{m} \in \mathbf{M}_i^c$, where the search space is the finite and countable set $\mathbf{M}^c_i$. 
Hence, the optimal search for the tightest decomposition functions is computable/tractable according to Definition \ref{defn:tractability}. 

Further, the result in Theorem \ref{thm:tractableDF} can be applied to each $f_i$, $i \in \mathbb{N}_{n_x}$ of the function $f$ to obtain the tightest remainder-form decomposition functions from the family of \emph{remainder-form CJSS decomposition functions} \syo{in} 
\eqref{eq:decomp1}. This is summarized in Algorithm \ref{algorithm1}, which takes an interval domain $\mathbb{I}\mathcal{Z}=[\underline{z},\overline{z}]$, the function $f$ and its Clarke \syo{Jacobians,} 
$\overline{J}_C^f$, $\underline{J}_C^f$, as inputs, and \syo{outputs} 
the remainder-form inclusion function $T^{f_{d}}_R\triangleq [\underline{f}_{d}(\underline{z},\overline{z}),\overline{f}_{d}(\overline{z},\underline{z})]$ (cf. Definition \ref{defn:remainder_dec}).

\begin{algorithm}[t] \small
\caption{Remainder-Form Decomposition Functions}\label{algorithm1}
\begin{algorithmic}[1]
\Function{$T^{f_d}_{R}$}{$f(\cdot),\overline{J}_C^f,\underline{J}_C^f,\overline{z},\underline{z}$}
\State Initialize: $\overline{f}_{d} \gets\infty,\underline{f}_{d} \gets-\infty$; 
\For{$i=1$ to $n_x$}
		\If{\eqref{eq:main_sys} is a discrete-time system}
		 \State \hspace{0cm} $\mathbf{M}_i^c \hspace{-.1cm}  \triangleq  \hspace{-.1cm}\{\mathbf{m} \in \mathbf{M}_i | \ \mathbf{m}_j = \min(({\underline{J}_C^f})_{ij},0) \ \vee$              
		\Statex \hspace{2.5cm} $\mathbf{m}_j= \max(({\overline{J}_C^f})_{ij},{0}), \forall j \in \mathbb{N}_{n_z} \}$.

		\EndIf
		\If{\eqref{eq:main_sys} is a continuous-time system}
		 \State \hspace{0cm} $\mathbf{M}_i^c \hspace{-.1cm}  \triangleq  \hspace{-.1cm}\{\mathbf{m} \in \mathbf{M}_i | \ \mathbf{m}_j = \min(({\underline{J}_C^f})_{ij},0) \ \vee \mathbf{m}_j=$              
		\Statex \hspace{1.8cm} $ \max(({\overline{J}_C^f})_{ij},{0}), \forall j \in \mathbb{N}_{n_z}$, $j \ne i$ and $\mathbf{m}_i=0\}$.

		\EndIf
				\For {$\mathbf{m} \in \mathbf{M}^c_i$} 
		\For{$j=1$ to $n_z$}
		\If{\eqref{eq:main_sys} is a continuous-time system $\land (j=i)$}
		\State \hspace{0cm} $\zeta^+_{\mathbf{m},j} \gets x_i$; \ $\zeta^-_{\mathbf{m},j} \gets x_i$;
		\ElsIf{$\mathbf{m}_j = \min(({\underline{J}_C^f})_{ij},{0})$}
		\State \hspace{0cm} $\zeta^+_{\mathbf{m},j} \gets \overline{z}_{j}$; \ $\zeta^-_{\mathbf{m},j} \gets \underline{z}_{j}$;
		\Else 
		\State \hspace{0cm} $\zeta^+_{\mathbf{m},j} \gets \underline{z}_{j}$; \ $\zeta^-_{\mathbf{m},j} \gets \overline{z}_{j}$;
		\EndIf
		\State \hspace{0cm} $\overline{f}_{d,i} \gets\min(\overline{f}_{d,i},f_i(\zeta^+_\mathbf{m})+\mathbf{m}^\top(\zeta^-_\mathbf{m}-\zeta^+_\mathbf{m}))$;
		\State \hspace{0cm} $\underline{f}_{d,i}\gets\min(\underline{f}_{d,i},f_i(\zeta^-_\mathbf{m})+\mathbf{m}^\top(\zeta^+_\mathbf{m}-\zeta^-_\mathbf{m}))$;
		\EndFor
		\EndFor
		
\EndFor
 \State \Return $\overline{f}_{d},\underline{f}_{d}$;
\EndFunction
		\end{algorithmic}

\end{algorithm} 

\subsection{Error Bounds}
Next, we more formally characterize the tightness of our proposed family of remainder-form decomposition functions in \eqref{eq:decomp1}, 
where we use the metric/measure of tightness in \eqref{eq:metric} that is based on the 
Hausdorff distance function. 
In particular, we 
derive lower and upper bounds on the \emph{over-approximation error} of the image set/range of function $f(\cdot)$, where the lower bound is achievable by our tight and tractable decomposition functions in Theorem \ref{thm:tractableDF}. 
\begin{thm}[Error Bounds] \label{thm:bounds}
Suppose that all the assumptions in Theorem \ref{thm:fdf} 
are satisfied for each $f_i 
 : \mathcal{Z} 
 \subset \mathbb{R}^{n_z} \to \mathbb{R}, i \in \mathbb{N}_{n_x}$. Let $T^f_O(\mathbb{I}\mathcal{Z}) \triangleq [\underline{f}^{\text{true}} ,\overline{f}^{\text{true}} ] \mo{\triangleq [ \min\limits_{z \in \mathbb{I}\mathcal{Z}} f(z),  \max\limits_{z \in \mathbb{I}\mathcal{Z}} f(z)]}=T^{f_d}_O(\mathbb{I}\mathcal{Z})$ be the tightest enclosing interval of the true image set/range of $f(\cdot)$ over $\mathbb{I}\mathcal{Z}=[\underline{z},\overline{z}] \in \mathbb{IR}^{n_z}$ (cf. Definition \ref{def:inc_func}), 
 $T_{\underline{\mathbf{m}},\overline{\mathbf{m}}}^{f_{d\mo{,i}}}(\mathbb{I}\mathcal{Z})\supseteq T^{f_\mo{i}}_O(\mathbb{I}\mathcal{Z})$, $\mo{\forall i \in \mathbb{N}_{n_x},}$ \mo{given in \eqref{eq:TR}}, be inclusion functions 
 using the family of decomposition functions in \eqref{eq:decomp1}, 
 \syo{and} \mo{$T^{f_d}_{\mathbb{M}}(\mathbb{I}\mathcal{Z}) \supseteq T^f_O(\mathbb{I}\mathcal{Z})$ be any inclusion function such that \syo{$T^{f_{d,i}}_{\mathbb{M}}(\mathbb{I}\mathcal{Z})=T_{\underline{\mathbf{m}},\overline{\mathbf{m}}}^{f_{d,i}}(\mathbb{I}\mathcal{Z})$,} $\forall i \in \mathbb{N}_{n_x}$, \syo{for some} 
 $\underline{\mathbf{m}},\overline{\mathbf{m}} \in \mathbf{M}_i$.} 
Then, the following 
inequalities hold:
\begin{enumerate}[(i)]
\item $\underline{q}_{f_d}(\mathbb{I}\mathcal{Z}) \leq  q( T_{\mo{\mathbb{M}}}^{f_d}(\mathbb{I}\mathcal{Z}) ,T^f_O(\mathbb{I}\mathcal{Z}))$, 
and
\item $\underline{q}_{f_d}(\mathbb{I}\mathcal{Z}) \leq \overline{q}_{f_d}(\mathbb{I}\mathcal{Z})  \leq  \hat{\overline{q}}_{f_d}(\mathbb{I}\mathcal{Z})$,
\end{enumerate}
 with the tightness metric $q([\underline{v},\overline{v}],[\underline{w},\overline{w}])
=\max\limits_{i\in \mathbb{N}_{n_x}} \max \{|\overline{v}_i-\overline{w}_i|,|\underline{v}_i-\underline{w}_i|\}$  
defined in \eqref{eq:metric}, 
and with the bounds $\underline{q}_{f_d}(\mathbb{I}\mathcal{Z})\triangleq \max_{i \in \mathbb{N}_{n_x}} \underline{q}_{f_{d,i}}(\mathbb{I}\mathcal{Z})$, $\overline{q}_{f_d}(\mathbb{I}\mathcal{Z})\triangleq \max_{i \in \mathbb{N}_{n_x}} \overline{q}_{f_{d,i}}(\mathbb{I}\mathcal{Z})$ and $\hat{\overline{q}}_{f_d}(\mathbb{I}\mathcal{Z})\triangleq \max_{i \in \mathbb{N}_{n_x}} \hat{\overline{q}}_{f_{d,i}}(\mathbb{I}\mathcal{Z})$, where 
for each $i \in \mathbb{N}_{n_x}$,
\begin{align*}
\begin{array}{rl}
\underline{q}_{f_{d,i}}(\mathbb{I}\mathcal{Z}) \hspace{-0.155cm}&\triangleq \max \{\min\limits_{\mathbf{m} \in \mathbf{M}^c_i}\Delta^1_{i,\mathbf{m}} \hspace{-.1cm}-\hspace{-.1cm}\overline{f}_i^{\text{true}},\underline{f}_i^{\text{true}}\hspace{-.1cm}-\hspace{-.1cm}\max\limits_{\mathbf{m} \in \mathbf{M}^c_i}\Delta^2_{i,\mathbf{m}} \},
\\
\hat{\overline{q}}_{f_{d,i}}(\mathbb{I}\mathcal{Z}) \hspace{-0.15cm}& \triangleq  \min\limits_{\mathbf{m} \in \mathbf{M}_i^c} \Delta^3_{i,\mathbf{m}}, \\
{\overline{q}}_{f_{d,i}}(\mathbb{I}\mathcal{Z}) \hspace{-0.15cm}& \triangleq  \min \{ \min\limits_{\mathbf{m} \in \mathbf{M}_i^c} \Delta^3_{i,\mathbf{m}} , \min\limits_{\mathbf{m} \in \mathbf{M}_i^c} \Delta^3_{i,\mathbf{m}}+ \Delta^4_{i,\mathbf{m}}\},
\end{array}
\end{align*}
with 
$\mathbf{M}_i$ 
and $\mathbf{M}_i^c$ 
 in \eqref{eq:Mj}, \eqref{eq:Mc} or \syo{\eqref{eq:Mj_CT}, \eqref{eq:Mc_CT}}, respectively, 
\begin{align}
\hspace{-0.3cm}\begin{array}{ll}
\Delta^1_{i,\mathbf{m}} \triangleq f_i( \zeta_{i,\mathbf{m}}^+)\hspace{-.cm}+\hspace{-.cm}\Delta^3_{i,\mathbf{m}} , \hspace{-0.25cm}& \Delta^2_{i,\mathbf{m}} \triangleq f_i(\zeta_{i,\mathbf{m}}^-)\hspace{-.cm}-\hspace{-.cm}\Delta^3_{i,\mathbf{m}}, \\
\Delta^3_{i,\mathbf{m}} \triangleq h_i(\zeta_{i,\mathbf{m}}^-)\hspace{-.05cm}-\hspace{-.05cm}h_i( \zeta_{i,\mathbf{m}}^+), 
 \hspace{-0.25cm}&  \Delta^4_{i,\mathbf{m}} \triangleq f_i( \zeta_{i,\mathbf{m}}^+)\hspace{-.05cm}-\hspace{-.05cm}f_i(\zeta_{i,\mathbf{m}}^-),
\end{array}\hspace{-0.35cm} \label{eq:Delta}
\end{align}
as well as $\zeta_{i,\mathbf{m}}^+ \triangleq \zeta_{i,\mathbf{m}}(\overline{z},\underline{z})$, $\zeta_{i,\mathbf{m}}^-\triangleq \zeta_{i,\mathbf{m}}(\underline{z},\overline{z})$ and  $\zeta_{i,\mathbf{m}}(.,.)$ defined in Theorem \ref{thm:fdf}. Further, without loss of tightness (cf. Lemma \ref{lem:mm_dec}), we can replace $\Delta^3_{i,\mathbf{m}}$ in \eqref{eq:Delta} with 
\begin{align} \label{eq:Delta3}
\Delta^3_{i,\mathbf{m}} \triangleq \mathbf{m}^\top(\zeta_{i,\mathbf{m}}^-\hspace{-.05cm}-\hspace{-.05cm} \zeta_{i,\mathbf{m}}^+),
\end{align}  
and the lower bound $\underline{q}_{f_d}(\mathbb{I}\mathcal{Z})$ is attained by the upper and lower decomposition functions (i.e., $T_R^{f_d}$) in Theorem \ref{thm:tractableDF}.
\end{thm}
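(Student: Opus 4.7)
The plan is to prove all three claims by computing the dimension-wise Hausdorff distance explicitly in terms of $\Delta^1, \Delta^2, \Delta^3, \Delta^4$ and then exploiting the freedom to choose $(\overline{\mathbf{m}},\overline{h})$ and $(\underline{\mathbf{m}},\underline{h})$ independently for the upper and lower endpoints of the inclusion. First, by definition \eqref{eq:decomp1} and the notation $\zeta^\pm_{i,\mathbf{m}}$ in the theorem, the endpoints of $T^{f_{d,i}}_{\underline{\mathbf{m}},\overline{\mathbf{m}}}(\mathbb{I}\mathcal{Z})$ reduce to $\overline{f}_{d,i}(\overline{z},\underline{z};\overline{\mathbf{m}},\overline{h}) = f_i(\zeta^+_{i,\overline{\mathbf{m}}}) + \Delta^3_{i,\overline{\mathbf{m}}} = \Delta^1_{i,\overline{\mathbf{m}}}$ and $\underline{f}_{d,i}(\underline{z},\overline{z};\underline{\mathbf{m}},\underline{h}) = f_i(\zeta^-_{i,\underline{\mathbf{m}}}) - \Delta^3_{i,\underline{\mathbf{m}}} = \Delta^2_{i,\underline{\mathbf{m}}}$. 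Since $T^{f_{d,i}}_{\mathbb{M}}(\mathbb{I}\mathcal{Z}) \supseteq [\underline{f}_i^{\text{true}},\overline{f}_i^{\text{true}}]$, the coordinate-wise Hausdorff distance is simply $\max\{\Delta^1_{i,\overline{\mathbf{m}}} - \overline{f}_i^{\text{true}},\, \underline{f}_i^{\text{true}} - \Delta^2_{i,\underline{\mathbf{m}}}\}$ with both arguments non-negative.

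For claim (i), I would lower-bound each argument separately by optimizing over its own free parameter: $\Delta^1_{i,\overline{\mathbf{m}}} - \overline{f}_i^{\text{true}} \geq \min_{(\mathbf{m},h)\in \mathbf{M}_i\times\mathcal{H}_{\mathbf{M}_i}} \Delta^1_{i,\mathbf{m}} - \overline{f}_i^{\text{true}}$, and analogously for $\underline{f}_i^{\text{true}} - \Delta^2_{i,\underline{\mathbf{m}}}$. Lemma \ref{lem:upper_lowwer_df} collapses the $\mathbf{M}_i$ optimization to $\mathbf{M}^c_i$, and (the proof of) Lemma \ref{lem:mm_dec} shows that the extrema over $\mathcal{H}_{\mathbf{M}_i}$ are attained by linear remainders $h(\zeta)=\mathbf{m}^\top\zeta$, i.e., by the substitution \eqref{eq:Delta3}. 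Taking the max over the two arguments and then the max over $i\in\mathbb{N}_{n_x}$ produces $\underline{q}_{f_d}(\mathbb{I}\mathcal{Z})$. Achievability by $T^{f_d}_R$ from Theorem \ref{thm:tractableDF} is then immediate since, by construction, its endpoints are exactly $\min_{\mathbf{m}\in\mathbf{M}^c_i}\Delta^1_{i,\mathbf{m}}$ and $\max_{\mathbf{m}\in\mathbf{M}^c_i}\Delta^2_{i,\mathbf{m}}$ with linear $h$.

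For claim (ii), the upper bounds $\overline{q}_{f_{d,i}}$ and $\hat{\overline{q}}_{f_{d,i}}$ come from two elementary substitutions: since $\zeta^+_{i,\mathbf{m}}, \zeta^-_{i,\mathbf{m}}\in \mathbb{I}\mathcal{Z}$, one has $\overline{f}_i^{\text{true}} \geq f_i(\zeta^+_{i,\mathbf{m}})$ and $\overline{f}_i^{\text{true}} \geq f_i(\zeta^-_{i,\mathbf{m}})$, with the symmetric bounds for $\underline{f}_i^{\text{true}}$. Substituting the first into $\Delta^1_{i,\mathbf{m}} - \overline{f}_i^{\text{true}}$ yields the bound $\Delta^3_{i,\mathbf{m}}$, while the second yields $\Delta^3_{i,\mathbf{m}}+\Delta^4_{i,\mathbf{m}}$; the same pair bounds $\underline{f}_i^{\text{true}} - \Delta^2_{i,\mathbf{m}}$. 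Taking the pointwise minimum of the two and then $\min_{\mathbf{m}\in\mathbf{M}^c_i}$ recovers $\overline{q}_{f_{d,i}}$, and dropping the second summand recovers $\hat{\overline{q}}_{f_{d,i}}$, giving $\overline{q}_{f_{d,i}}\leq \hat{\overline{q}}_{f_{d,i}}$. The chain $\underline{q}_{f_{d,i}}\leq \overline{q}_{f_{d,i}}$ then follows by composing the same substitutions with the bounds used in (i), and the coordinate-wise max propagates all three inequalities.

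The main subtlety, rather than a deep obstacle, is justifying that the minimization over the infinite-dimensional set $\mathcal{H}_{\mathbf{M}_i}$ and the unbounded set $\mathbf{M}_i$ can be replaced, without any loss of tightness, by a minimization over the finite set $\mathbf{M}^c_i$ together with linear remainders $h(\zeta)=\mathbf{m}^\top\zeta$; this is precisely what renders the error-bound expressions computable. This reduction is the content of Lemmas \ref{lem:upper_lowwer_df} and \ref{lem:mm_dec}, and must be invoked carefully wherever an extremum appears inside $\underline{q}_{f_d}$, $\overline{q}_{f_d}$, or $\hat{\overline{q}}_{f_d}$, as well as in the achievability argument for $T^{f_d}_R$.
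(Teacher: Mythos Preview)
Your proof of (i) is essentially the paper's own argument: compute the endpoints of $T^{f_{d,i}}_{\underline{\mathbf{m}},\overline{\mathbf{m}}}$ as $\Delta^1_{i,\overline{\mathbf{m}}}$ and $\Delta^2_{i,\underline{\mathbf{m}}}$, lower-bound each term of the $\max$ by optimizing its own parameter, and invoke Lemmas~\ref{lem:upper_lowwer_df}--\ref{lem:mm_dec} to reduce to $\mathbf{M}^c_i$ with linear remainders; achievability by $T^{f_d}_R$ is then read off from Theorem~\ref{thm:tractableDF}.

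For (ii) your route is genuinely different and in fact more elementary than the paper's. The paper derives the upper bounds by invoking an external result, \cite[Theorem~4-(b)]{cornelius1984computing}, which bounds the Hausdorff excess of any remainder-form inclusion by the width of the remainder's range; it then applies this once with $h_i$ as the remainder (yielding $\Delta^3_{i,\mathbf{m}}$) and once with $g_i=f_i-h_i$ as the remainder (yielding $\Delta^3_{i,\mathbf{m}}+\Delta^4_{i,\mathbf{m}}$), using the CJSS/alignment structure of Lemma~\ref{lem:gh_aligned} to identify the extremal values of $h_i$ and $g_i$ at the corner points $\zeta^\pm_{i,\mathbf{m}}$. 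You instead bypass both the external citation and the explicit CJSS argument: since $\zeta^\pm_{i,\mathbf{m}}\in\mathbb{I}\mathcal{Z}$, the trivial bounds $\underline{f}_i^{\text{true}}\le f_i(\zeta^\pm_{i,\mathbf{m}})\le \overline{f}_i^{\text{true}}$ immediately give $\Delta^1_{i,\mathbf{m}}-\overline{f}_i^{\text{true}}\le\Delta^3_{i,\mathbf{m}}$ and $\Delta^1_{i,\mathbf{m}}-\overline{f}_i^{\text{true}}\le\Delta^3_{i,\mathbf{m}}+\Delta^4_{i,\mathbf{m}}$ (and symmetrically for the lower endpoint). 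Your argument is self-contained and slightly sharper in spirit; the paper's has the advantage of situating the bound within the classical remainder-form framework and making transparent why the two bounds correspond to the two choices of ``which summand is the remainder.''
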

\begin{proof}
First, from \eqref{eq:decomp1}, \eqref{eq:TR} 
and \eqref{eq:Delta}, we find
\begin{align} \label{eq:true_error}
\nonumber &\tilde{q}(T_{\underline{\mathbf{m}},\overline{\mathbf{m}}}^{f_{d,i}}(\mathbb{I}\mathcal{Z}),T^{f_i}_O(\mathbb{I}\mathcal{Z}))=\max \{\Delta^1_{i,\mathbf{m}} \hspace{-.1cm}-\hspace{-.1cm}\overline{f}_i^{\text{true}},\underline{f}_i^{\text{true}}\hspace{-.1cm}-\hspace{-.1cm}\Delta^2_{i,\mathbf{m}} \} \\
 &\geq \max \{\min_{\mathbf{m} \in \mathbf{M}^c_i}\Delta^1_{i,\mathbf{m}} \hspace{-.1cm}-\hspace{-.1cm}\overline{f}_i^{\text{true}},\underline{f}_i^{\text{true}}\hspace{-.1cm}-\hspace{-.1cm}\max_{\mathbf{m} \in \mathbf{M}^c_i}\Delta^2_{i,\mathbf{m}} \},
\end{align}
%
 where the inequality in (i) 
 follows from independently searching over $\mo{\mathbf{M}^c_i \subset \mathbf{M}_i}$ to minimize each argument of the maximization, \mo{as well as the fact that} by Lemmas \ref{lem:upper_lowwer_df} and \ref{lem:mm_dec}, we can apply \eqref{eq:Delta3} and search only over $\mathbf{M}^c_i \subset \mo{\mathbf{M}_i}$ without  any conservatism, and it can be verified that by construction, the lower bound $\underline{q}_f(\mathbb{I}\mathcal{Z})$ is attained by the decomposition functions in Theorem \ref{thm:tractableDF}. 

To obtain (ii), 
we apply 
\cite[Theorem 4-(b)]{cornelius1984computing}, which proved that for any remainder-form inclusion functions with remainder function $r_i(\cdot)$ satisfying $r_i(\mathbb{I}\mathcal{Z})\subset [\underline{r}_i,\overline{r}_i]$,
$\tilde{q}(W^R_{f_{d,i}}(\mathbb{I}\mathcal{Z}),V_{f_i}(\mathbb{I}\mathcal{Z}))\le \overline{r}_i-\underline{r}_i$ holds. 
For the first inequality \syo{in (ii)}, only $h_i(\cdot)$ 
is considered as the remainder function, while in the second inequality, both $h_i(\cdot)$ and $g_i(\cdot) \triangleq f_i(\cdot)-h_i(\cdot)$ 
are considered as remainder functions, separately, with the minimum chosen as the bound. Moreover, since $h_i(\cdot)$ is CJSS, by \eqref{eq:corner}, $\overline{h}_i$ and $\underline{h}_i$ are attained at the corner points given by $\zeta^-_{i,\mathbf{m}}$ and $\zeta^+_{i,\mathbf{m}}$, respectively, with $\Delta^3_{i,\mathbf{m}}\triangleq\overline{h}_i-\underline{h}_i$. Further, since $g_i(\cdot)$ is aligned with $-h_i(\cdot)$ by Lemma \ref{lem:gh_aligned}, $\overline{g}_i$ and $\underline{g}_i$ are attained at the corner points given by $\zeta^+_{i,\mathbf{m}}$ and $\zeta^-_{i,\mathbf{m}}$, respectively, with $\Delta^3_{i,\mathbf{m}}+\Delta^4_{i,\mathbf{m}}\triangleq\overline{g}_i-\underline{g}_i$. 
\end{proof}

The above result holds for both discrete-time and continuous-time systems (with overloading described in Definition \ref{def:inc_func}). 
Further, note that lower bound $\underline{q}_{f,d}(\mathbb{I}\mathcal{Z})$ is attainable by $T^{f_d}_R$ but since 
it is a function of the unknown $\overline{f}^{true}$ and $\underline{f}^{true}$, it cannot be computed. Thus, its upper bounds ${\overline{q}}_{f_d}(\mathbb{I}\mathcal{Z})$ and $\hat{\overline{q}}_{f_d}(\mathbb{I}\mathcal{Z}) $ in (ii) that are independent of $\overline{f}^{true}$ and $\underline{f}^{true}$ are more useful, e.g.,  as worst case function over-approximation error bounds in reachability and robust control problems.
\subsection{Convergence Rate and Subdivision Principle} 
In this subsection, 
we study the convergence rate of our proposed $T^{f_d}_R$, i.e., the rate at which its approximation error goes to zero, when the domain interval 
diameter $d(\mathbb{I}\mathcal{Z})$
shrinks. We show that when using $T^{f_d}_R$, the error converges at least linearly, which is also the convergence rate of natural inclusions $T^{f}_N$ \cite[Chapter 6]{moore2009introduction}. Further, we show that the \emph{subdivision principle}  introduced in \cite{alefeld2000interval} can be applied 
to improve the convergence. 
We first introduce the notion of convergence rate, inspired by 
\cite{alefeld2000interval}, and then, we present the convergence rate and the subdivision principle for our proposed $T^{f_d}_R$.  
\begin{defn}[Convergence Rate]
An inclusion function $T^f:\mathbb{IR}^{n_z} \to \mathbb{IR}^{n_x}$ for an ELLC vector field $f: \mathcal{Z} \subset \mathbb{R}_{n_z} 
\to \mathbb{R}^{n_x}$ has a convergence rate 
$\alpha >0$, if 
\begin{align}
q(T^f(\mathbb{I}\mathcal{Z}),T^f_O(\mathbb{I}\mathcal{Z}))\le \beta \, d(\mathbb{I}\mathcal{Z})^\alpha, 
\end{align}
\syo{for some $\beta >0$,} where $T\syo{^f}(\mathbb{I}\mathcal{Z})$ is the interval over-approximation of the range of $f(\cdot)$ over $\mathbb{I}\mathcal{Z}$ (i.e., an inclusion function), $T^f_O(\mathbb{I}\mathcal{Z})$ is the tightest inclusion function (cf. Definition \ref{def:inc_func}), 
$q(\cdot,\cdot)$ is defined in \eqref{eq:metric} 
and $d(\mathbb{I}\mathcal{Z}) \triangleq \|\overline{z}-\underline{z}\|_\infty$. 
\end{defn} 
\begin{thm}[Convergence Rate and Subdivision Principle for $T^{f_d}_R$]\label{thm:convergence_rate}
 The $T^{f_d}_R$ inclusion function for any ELLC vector field $f: \mathcal{Z} \subset \mathbb{R}_{n_z} 
\to \mathbb{R}^{n_x}$ satisfies: 
 \begin{align} \label{eq:conv_rate}
\syo{\underline{q}_{f_d}}(\mathbb{I}\mathcal{Z})=q(T_R^{f_d}(\mathbb{I}\mathcal{Z}),T^f_O(\mathbb{I}\mathcal{Z}))\le \beta^f_R d(\mathbb{I}\mathcal{Z}),
\end{align}
with convergence rate $\alpha=1$ for some $\beta^f_R>0$. Moreover, 
applying the subdivision principle, we have
 \begin{align} \label{eq:conv_subdiv}
q(T_R^{f_d}(\mathbb{I}\mathcal{Z};k),T^f_O(\mathbb{I}\mathcal{Z};k))\le \frac{\gamma^f_R}{k},
\end{align}
where $\mathbb{I}\mathcal{Z}$ is subdivided into $k^{n_z}$ interval vectors $\mathbb{I}\mathcal{Z}^l, l\in\mathbb{N}_{k^{n_z}}$ (i.e., with $k$ divisions in each dimension such that $d(\mathbb{I}\mathcal{Z}^l_j)=\frac{d(\mathbb{I}\mathcal{Z}_j)}{k}$ for $j\in\mathbb{N}_{n_z}$, $l\in\mathbb{N}_{k^{n_z}}$), $T_R^{f_d}(\mathbb{I}\mathcal{Z};k) \triangleq \bigcup_{l=1}^{k^{n_z}} T_R^{f_d}(\mathbb{I}\mathcal{Z}^l)$ and $T^f_O(\mathbb{I}\mathcal{Z};k) \triangleq \bigcup_{l=1}^{k^{n_z}} T^f_O(\mathbb{I}\mathcal{Z}^l)$.
\end{thm}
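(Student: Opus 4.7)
The plan is to leverage Theorem~\ref{thm:bounds}(ii) to upper-bound the tightness metric $q(T_R^{f_d}(\mathbb{I}\mathcal{Z}),T^f_O(\mathbb{I}\mathcal{Z}))$, which equals $\underline{q}_{f_d}(\mathbb{I}\mathcal{Z})$ by the last claim of that theorem (since $T_R^{f_d}$ attains the lower bound), by a linear function of $d(\mathbb{I}\mathcal{Z})$. Specifically, Theorem~\ref{thm:bounds}(ii) gives $\underline{q}_{f_d}(\mathbb{I}\mathcal{Z}) \le \overline{q}_{f_d}(\mathbb{I}\mathcal{Z}) \le \max_{i\in\mathbb{N}_{n_x}} \min_{\mathbf{m}\in\mathbf{M}_i^c}\Delta^3_{i,\mathbf{m}}$, where by \eqref{eq:Delta3} one has $\Delta^3_{i,\mathbf{m}}=\mathbf{m}^\top(\zeta_{i,\mathbf{m}}^- - \zeta_{i,\mathbf{m}}^+)$, so it suffices to bound this quantity by a constant multiple of $d(\mathbb{I}\mathcal{Z})$.

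To that end, I would inspect the definition in \eqref{eq:corner} case by case. In the first case, $\mathbf{m}_j \ge \max((\overline{J}^f_C)_{ij},0)\ge 0$ forces $\zeta_{i,\mathbf{m},j}^+=\underline{z}_j$ and $\zeta_{i,\mathbf{m},j}^-=\overline{z}_j$; in the second case, $\mathbf{m}_j\le \min((\underline{J}^f_C)_{ij},0)\le 0$ forces $\zeta_{i,\mathbf{m},j}^+=\overline{z}_j$ and $\zeta_{i,\mathbf{m},j}^-=\underline{z}_j$; for the continuous-time exception $j=i$, $\mathbf{m}_i=0$ annihilates that term. In all cases the contribution $\mathbf{m}_j(\zeta_{i,\mathbf{m},j}^- - \zeta_{i,\mathbf{m},j}^+) = |\mathbf{m}_j|(\overline{z}_j-\underline{z}_j)$ is nonnegative, so $\Delta^3_{i,\mathbf{m}} = \sum_{j=1}^{n_z} |\mathbf{m}_j|(\overline{z}_j-\underline{z}_j) \le \|\mathbf{m}\|_1\,d(\mathbb{I}\mathcal{Z})$. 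Setting $\beta^f_R \triangleq \max_{i\in\mathbb{N}_{n_x}}\min_{\mathbf{m}\in\mathbf{M}_i^c}\|\mathbf{m}\|_1$, which is finite by Assumption~\ref{assum:jacobian-bounds} because at least one of $(\overline{J}^f_C)_{ij},(\underline{J}^f_C)_{ij}$ is finite for each $(i,j)$ and hence $\mathbf{M}_i^c$ contains vectors with all finite entries, yields \eqref{eq:conv_rate}.

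For the subdivision principle \eqref{eq:conv_subdiv}, I would first recall the standard property of the Hausdorff distance on paired unions, $q\bigl(\bigcup_l A_l,\bigcup_l B_l\bigr)\le \max_l q(A_l,B_l)$, applied componentwise to the interval boxes (so that the $\ell_\infty$-based metric \eqref{eq:metric} extends canonically to the union-of-boxes setting used in the theorem statement). Applying \eqref{eq:conv_rate} to each sub-interval $\mathbb{I}\mathcal{Z}^l$ of diameter $d(\mathbb{I}\mathcal{Z}^l)=d(\mathbb{I}\mathcal{Z})/k$ gives $q(T_R^{f_d}(\mathbb{I}\mathcal{Z}^l),T^f_O(\mathbb{I}\mathcal{Z}^l))\le \beta^f_R\,d(\mathbb{I}\mathcal{Z})/k$, and maximizing over $l\in\mathbb{N}_{k^{n_z}}$ delivers the bound $\gamma^f_R/k$ with $\gamma^f_R\triangleq \beta^f_R\,d(\mathbb{I}\mathcal{Z})$.

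The main obstacle, aside from bookkeeping the sign cases that establish $\Delta^3_{i,\mathbf{m}}=\|\,|\mathbf{m}|\,\|_1 d(\mathbb{I}\mathcal{Z})$-type bounds, will be formally justifying the extension of the metric $q$ from intervals to unions of intervals in \eqref{eq:conv_subdiv}. Since all inner inclusions $T_R^{f_d}(\mathbb{I}\mathcal{Z}^l)\supseteq T^f_O(\mathbb{I}\mathcal{Z}^l)$ are paired boxes indexed by the same $l$, the componentwise $\ell_\infty$ argument goes through without any real difficulty; beyond this, the proof reduces to the already-established Theorem~\ref{thm:bounds} and elementary interval arithmetic.
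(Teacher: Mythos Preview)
Your proposal is correct and follows essentially the same route as the paper for \eqref{eq:conv_rate}: both invoke Theorem~\ref{thm:bounds} to pass from $\underline{q}_{f_d}(\mathbb{I}\mathcal{Z})$ to $\hat{\overline{q}}_{f_d}(\mathbb{I}\mathcal{Z})=\max_i\min_{\mathbf{m}\in\mathbf{M}_i^c}\Delta^3_{i,\mathbf{m}}$ with the linear-remainder form \eqref{eq:Delta3}, and then bound $\Delta^3_{i,\mathbf{m}}$ linearly in $d(\mathbb{I}\mathcal{Z})$. Your explicit case analysis giving $\Delta^3_{i,\mathbf{m}}=\sum_j|\mathbf{m}_j|(\overline{z}_j-\underline{z}_j)\le\|\mathbf{m}\|_1\,d(\mathbb{I}\mathcal{Z})$ is more detailed than the paper's one-line ``triangle inequality'' and yields the constant $\beta^f_R=\max_i\min_{\mathbf{m}\in\mathbf{M}_i^c}\|\mathbf{m}\|_1$, whereas the paper states $\beta^f_R=\max_i\|\mathbf{m}_i\|_\infty$; your version is the sharper one that actually follows from pairing $|\mathbf{m}_j|$ with $(\overline{z}_j-\underline{z}_j)\le d(\mathbb{I}\mathcal{Z})$ term by term.

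For \eqref{eq:conv_subdiv} the paper simply defers to \cite[Theorem~4.1]{ratschek1984computer}, while you supply the direct argument $q\bigl(\bigcup_l A_l,\bigcup_l B_l\bigr)\le\max_l q(A_l,B_l)$ for the nested pairs $T_R^{f_d}(\mathbb{I}\mathcal{Z}^l)\supseteq T^f_O(\mathbb{I}\mathcal{Z}^l)$ and then apply \eqref{eq:conv_rate} on each sub-box. This is exactly the content of the cited result, so your approach is a self-contained unpacking of the same proof rather than a genuinely different one; it buys independence from the external reference at the cost of having to justify the metric extension to unions, which---as you note---is routine here because the unions are over matched pairs of nested boxes.
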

\begin{proof}
 For a linear remainder function $h(z)=\mathbf{m}^\top z$  with $\mathbf{m}\triangleq\begin{bmatrix}{\mathbf{m}}_1 \ \hdots \ {\mathbf{m}}_{n_x}\end{bmatrix}$ (cf. Lemma \ref{lem:mm_dec}), $\hat{\overline{q}}_{f_d}(\mathbb{I}\mathcal{Z})$ in Theorem  \ref{thm:bounds} can be upper bounded by triangle inequality by \eqref{eq:conv_rate} with
  $\beta^f_R=\max_{i \in \mathbb{N}_{n_x}} \|\mathbf{m}_i\|_{\infty}$.
  Further, the proof of \eqref{eq:conv_subdiv} follows the same 
  lines as the proof of \cite[Theorem 4.1]{ratschek1984computer}. 
\end{proof}
\subsection{Set Inversion Algorithm}
The remainder-form decomposition functions returned by Algorithm \ref{algorithm1} can be used with the generalized embedding system in Definition \ref{def:embedding} 
to over-approximate (unconstrained) reachable sets of a dynamic system governed by the vector field $f(\cdot)$, which corresponds to the propagated/predicted sets in state observers/estimators. However, when additional state constraint information is available (e.g., sensor observations/measurements in state estimation problems, known safety constraints from system design and manufactured constraints from modeling redundancy \cite{yang2020accurate,alamo2005guaranteed,alamo2008set,scott2013bounds,shen2017rapid}), an additional set inversion (also known as update or refinement) step will allow us to take the advantage of the constraints to shrink 
the propagated sets, i.e., to obtain a tighter subset of the propagated set that is compatible/consistent with the given constraints. Further details about the application of the reachable/propagated set and set inversion algorithms will be described in Section \ref{sec:application}.

Formally, given the constraint/observation function $\mu(\mo{x},u)$ in \eqref{eq:main_sys} with known $u$, a constraint/observation set $\mathcal{Y}$ with maximal and minimal values $\overline{y},\underline{y}$ (satisfying $\mu(\mo{x},u) \subset \mathcal{Y} \subseteq [\underline{y},\overline{y}]$) and a prior (or propagated/predicted) reachable interval $\mathbb{I}\mathcal{\mo{X}}^p=[\underline{\mo{x}}^p,\overline{\mo{x}}^p]$, we wish to find an updated/refined interval $\mathbb{I}\mathcal{\mo{X}}^u \subseteq \mathbb{I}\mathcal{\mo{X}}^p $, such that \eqref{eq:set_inv} holds, i.e., to solve Problem \ref{prob:set-inv}. 
 Finding $\mathbb{I}\mathcal{\mo{X}}^u$ in \eqref{eq:set_inv} is called the \emph{set inversion} problem \cite{jaulinapplied}. To our best knowledge, existing set inversion algorithms/operators either compute subpavings (i.e., unions of intervals) instead of an interval using (conservative) natural inclusions (SIVIA \cite[Chapter 3]{jaulinapplied}) or only applies if relatively restrictive monotonicity assumptions hold ($\mathcal{I}_G$ \cite[Algorithm 1]{yang2020accurate}). 
 
 In this section, leveraging our proposed \emph{discrete-time} decomposition-based inclusion functions for an ELLC function ${\nu}(\mo{x})\triangleq\mu(\mo{x},u)$ with known $u$, i.e., $T^{{\nu}_d}_R$ , we develop a novel set inversion algorithm that solves Problem \ref{prob:set-inv}, which is summarized in Algorithm \ref{algorithm2}. 
The main idea behind Algorithm \ref{algorithm2} is based on the observation that a candidate interval $
\mathbb{I}\Xi \triangleq [\underline{\xi},\overline{\xi}]
\subseteq {\mathbb{I}\mathcal{\mo{X}}}^p$ that satisfies $T_R^{\nu_d}(\mathbb{I}\Xi)
\cap [\underline{y},\overline{y}] = \emptyset$ 
(i.e., if $\underline{\nu}_d(\underline{\xi},\overline{\xi})>\overline{y}$ \emph{or} if $\overline{\nu}_d(\overline{\xi},\underline{\xi})<\underline{y}$)
 is \emph{incompatible/inconsistent} with the set $ \{\mo{x} \in \mathbb{I}\mathcal{\mo{X}}^p\, | \, \underline{y} \leq \mu(\mo{x},u) \leq \overline{y} \} $ and 
 can be eliminated/ruled out from $\mathbb{I}\mathcal{\mo{X}}^p$ and \syo{thus,} shrinking $\mathbb{I}\mathcal{\mo{X}}^u$. 

Using this idea, 
%
starting from the prior/propagated interval and using bisection for each dimension, Algorithm \ref{algorithm2} shrinks the compatible interval from below and/or above if $\underline{\nu}_d$ 
is strictly greater than $\overline{y}$ or 
if $\overline{\nu}_\syo{d}$ 
is strictly smaller than $\underline{y}$ (cf. Lines 8 and 18). 
Repeating this procedure along with \syo{bisections with a threshold $\epsilon$,} 
the candidate intervals that are determined to be \emph{inconsistent} with the constraint/observation set are ruled out. Note that the ordering of the dimensions in the `for' loop on Line 3 may have an impact on the tightness of the returned interval $\mathbb{I}\mathcal{\mo{X}}^u=[\underline{\mo{x}}^u,\overline{\mo{x}}_u]$ and \mo{so}, it may be desirable to tailor the order to the problem at hand, to randomize the order or to repeat the algorithm with the previous $\mathbb{I}\mathcal{\mo{X}}^u$ as $\mathbb{I}\mathcal{\mo{X}}^p$ multiple times.

The following result shows that Algorithm \ref{algorithm2} returns $\mathbb{I}\mathcal{\mo{X}}^u=[\underline{\mo{x}}^u,\overline{\mo{x}}_u]$ that satisfies 
\eqref{eq:set_inv}, i.e., solves Problem \ref{prob:set-inv}.  
 \begin{algorithm}[t] \small
\caption{Set Inversion based on Discrete-Time $T^{\nu_d}_R$}\label{algorithm2}
\begin{algorithmic}[1]
\Function{Set-Inv}{$\nu(\cdot),\overline{J}_C^\nu,\underline{J}_C^\nu,\overline{\mo{x}}^p,\underline{\mo{x}}^p,\overline{y},\underline{y},\epsilon$}
		\State Initialize: $\overline{\mo{x}}^u \gets \overline{\mo{x}}^p,\underline{\mo{x}}^u \gets\underline{\mo{x}}^p$; 
				\For {$\mo{i}=1$ to $n_{\mo{x}}$} 
                 \State $\underline{\zeta} \gets \underline{\mo{x}}^u_{\mo{i}}$; $\overline{\zeta} \gets \overline{\mo{x}}^u_{\mo{i}}$;
		\While{$\overline{\zeta}-\underline{\zeta} >\epsilon$}
		\State $\zeta_m \gets \frac{1}{2}(\overline{\zeta}+\underline{\zeta})$; $\overline{\xi} \gets \overline{\mo{x}}^u$; $\underline{\xi} \gets \underline{\mo{x}}^u$; $\underline{\xi}_{\mo{i}}  \gets \zeta_m$;
		\State $(\overline{\nu}_d,\underline{\nu}_d) \gets T^{\nu_d}_{R}(\nu(\cdot),\overline{J}_C^\nu,\underline{J}_C^\nu,\overline{\xi},\underline{\xi})$; (Algorithm \ref{algorithm1})
		\If{$(\overline{\nu}_d < \underline{y}) \ \vee \ (\underline{\nu}_d > \overline{y})  $}
		\State $\overline{\zeta} \gets \zeta_m$; $\overline{\mo{x}}^u_{\mo{i}} \gets \overline{\zeta}$;
		\Else{}
		\State $\underline{\zeta} \gets \zeta_m$;
		\EndIf
		\EndWhile
		\State $\underline{\zeta} \gets \underline{\mo{x}}^u_{\mo{i}} $; $\overline{\zeta} \gets \overline{\mo{x}}^u_{\mo{i}} $;
		\While{$\overline{\zeta}-\underline{\zeta} >\epsilon$}
		\State $\zeta_m \gets \frac{1}{2}(\overline{\zeta}+\underline{\zeta})$; $\overline{\xi} \gets \overline{\mo{x}}^u$; $\underline{\xi} \gets \underline{\mo{x}}^u$; $\overline{\xi}_{\mo{i}}  \gets \zeta_m$;
		\State $(\overline{\nu}_d,\underline{\nu}_d) \gets T^{\nu_d}_{R}(\nu(\cdot),\overline{J}_C^\nu,\underline{J}_C^\nu,\overline{\xi},\underline{\xi})$; (Algorithm \ref{algorithm1})
		\If{$(\overline{\nu}_d < \underline{y}) \ \vee \ (\underline{\nu}_d > \overline{y})  $}
		\State $\underline{\zeta} \gets \zeta_m$; $\underline{\mo{x}}^u_{\mo{i}} \gets \underline{\zeta}$;
		\Else{}
		\State $\overline{\zeta} \gets \zeta_m$;
		\EndIf
		\EndWhile
		\EndFor
		 \State \Return {$\overline{\mo{x}}^u,\underline{\mo{x}}^u$};
\EndFunction
		\end{algorithmic}

\end{algorithm}     

 \begin{prop} \label{lem:set_inv}
 Suppose Assumptions \ref{assum:ELLC} and \ref{assum:jacobian-bounds} hold and consider an ELLC constraint/observation function $\nu : \mathcal{\mo{X}}  \subset \mathbb{R}^{n_{\mo{x}}} \to  \mathbb{R}^{n_{\mo{\mu}}}$, where $\nu(\mo{x})\triangleq \mu(\mo{x},u)$ with known $u$, a constraint/observation set $\mathcal{Y}_t\subseteq [\underline{y}_t,\overline{y}_t]$ and a prior/propagated interval $\mathbb{I}\mathcal{\mo{X}}^p \triangleq [\underline{\mo{x}}_p,\overline{\mo{x}}_p] \in \mathbb{IR}^{n_{\mo{x}}}$.
   Then, the updated/refined interval $\mathbb{I}\mathcal{\mo{X}}^u\triangleq[\underline{\mo{x}}^u,\overline{\mo{x}}^u]$  
   returned by Algorithm \ref{algorithm2} satisfies \eqref{eq:set_inv}. 
 \end{prop}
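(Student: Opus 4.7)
The plan is to verify the two inclusions in \eqref{eq:set_inv} separately. The right inclusion $\mathbb{I}\mathcal{X}^u \subseteq \mathbb{I}\mathcal{X}^p$ is essentially immediate from the structure of Algorithm \ref{algorithm2}: the bounds $\overline{x}^u,\underline{x}^u$ are initialized to $\overline{x}^p,\underline{x}^p$ on Line 2 and are only updated in a monotone fashion, i.e., $\overline{x}^u_i$ is only decreased on Line 9 (since $\zeta_m \leq \overline{x}^u_i$ throughout the first while-loop) and $\underline{x}^u_i$ is only increased on Line 19 (since $\zeta_m \geq \underline{x}^u_i$ throughout the second while-loop). Thus, the interval never grows beyond its initial value.

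The core of the proof is the left inclusion. I would establish the following loop invariant: at every iteration of the two while-loops for every dimension $i$, every $x \in \mathbb{I}\mathcal{X}^p$ with $\mu(x,u) = \nu(x) \in [\underline{y},\overline{y}]$ satisfies $\underline{x}^u \leq x \leq \overline{x}^u$. Assuming the invariant holds at the start of an iteration, I would consider the first while-loop (the second is symmetric). The algorithm constructs $\underline{\xi}, \overline{\xi}$ with $\underline{\xi}_i = \zeta_m$, $\overline{\xi}_i = \overline{x}^u_i$, and $\underline{\xi}_j = \underline{x}^u_j$, $\overline{\xi}_j = \overline{x}^u_j$ for $j\neq i$, and updates $\overline{x}^u_i \gets \zeta_m$ only when $\overline{\nu}_d(\overline{\xi},\underline{\xi}) < \underline{y}$ or $\underline{\nu}_d(\underline{\xi},\overline{\xi}) > \overline{y}$ (Lines 8--9). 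I would then invoke Proposition \ref{cor:dec_inc} together with Theorem \ref{thm:tractableDF}: since $T^{\nu_d}_R(\mathbb{I}\Xi)=[\underline{\nu}_d(\underline{\xi},\overline{\xi}),\overline{\nu}_d(\overline{\xi},\underline{\xi})]$ is a valid inclusion function, $\nu(x) \in T^{\nu_d}_R(\mathbb{I}\Xi)$ for all $x \in \mathbb{I}\Xi \triangleq [\underline{\xi},\overline{\xi}]$. Therefore, if $T^{\nu_d}_R(\mathbb{I}\Xi) \cap [\underline{y},\overline{y}] = \emptyset$ (equivalently, one of the two conditions on Line 8 is triggered), no $x \in \mathbb{I}\Xi$ can be compatible, so none of the eliminated points (those with $x_i \in [\zeta_m, \overline{x}^u_i]$ and $x_j \in [\underline{x}^u_j,\overline{x}^u_j]$ for $j\neq i$) satisfies $\nu(x) \in [\underline{y},\overline{y}]$. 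Hence, restricting $\overline{x}^u_i$ to $\zeta_m$ preserves the invariant. An analogous argument applies to the second while-loop.

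Termination of each while-loop follows from the bisection threshold $\epsilon > 0$, which ensures finitely many iterations. Combining the preserved invariant with the shrinking property yields the required set inclusion \eqref{eq:set_inv}. The main step requiring care will be bookkeeping the precise form of $\mathbb{I}\Xi$ at each iteration — in particular ensuring that when the bisection does \emph{not} rule out the slice (the \textbf{else} branches on Lines 10--11 and 20--21), the algorithm correctly narrows its search interval $[\underline{\zeta},\overline{\zeta}]$ without prematurely modifying $\overline{x}^u$ or $\underline{x}^u$, so that the inclusion-function check on subsequent iterations is applied to the correct candidate slice. Once this is verified, the proposition follows directly from the inclusion property of $T^{\nu_d}_R$.
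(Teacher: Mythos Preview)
Your proposal is correct and follows essentially the same approach as the paper: both arguments reduce to showing that whenever the algorithm shrinks $[\underline{x}^u,\overline{x}^u]$ (Lines 8--9 or 18--19), the eliminated slice $\mathbb{I}\Xi$ satisfies $T^{\nu_d}_R(\mathbb{I}\Xi)\cap[\underline{y},\overline{y}]=\emptyset$, and then invoke the inclusion-function property of $T^{\nu_d}_R$ to conclude that no compatible point was removed. The only stylistic difference is that the paper argues by contradiction on the final output (assuming some compatible $\zeta\notin\mathbb{I}\mathcal{X}^u$ and tracing it back to a ruled-out slice), whereas you maintain a loop invariant forward through the iterations; the two presentations are logically equivalent.
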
 
 \begin{proof}
 Obviously, $\mathbb{I}\mathcal{\mo{X}}^u\subseteq{\mathbb{I}\mathcal{\mo{X}}^p}$ (i.e., $\underline{\mo{x}}^p \leq \underline{\mo{x}}^u$ and $\overline{\mo{x}}^p \geq \overline{\mo{x}}^u$) by initialization and construction (cf. Lines 2, 9 and 19). Further, we show that $\mathbb{I}\mathcal{\mo{X}}^u \supseteq \mathbb{I}\mathcal{\mo{X}}^* \triangleq \{\mo{x} \in \mathbb{I}\mathcal{\mo{X}}^p \,|\, \underline{y} \leq \nu(\mo{x}) \leq \overline{y}\}$. To use contradiction, suppose that it does not hold. Then, $\exists \zeta \in \mathbb{I}\mathcal{\mo{X}}^*$ such that $\zeta \notin \mathbb{I}\mathcal{\mo{X}}^u$, i.e., $\exists \mo{i} \in \mathbb{N}_{n_{\mo{x}}}$ such that $\zeta_{\mo{i}} > \overline{\mo{x}}^u_{\mo{i}}$ or $\zeta_{\mo{i}} < \underline{\mo{x}}^u_{\mo{i}}$. Without loss of generality, suppose the first case holds, i.e., $\zeta_{\mo{i}} > \overline{\mo{x}}^u_{\mo{i}}$ (the proof for 
$\zeta_{\mo{i}} < \underline{\mo{x}}^u_{\mo{i}}$ is similar). Then, $\zeta \in [\underline{\mo{x}}^m, \overline{\mo{x}}^p]$, where $\underline{\mo{x}}^m_{\mo{i}}>\overline{\mo{x}}^u_{\mo{i}}$ and $\underline{\mo{x}}^m_{i'}=\underline{\mo{x}}^p_{i'}, \forall i' \ne i$. Hence, 
\begin{align}\label{eq:cr}
\underline{\nu}_{d}^R(\underline{\mo{x}}^m,\overline{\mo{x}}^p) \leq {\nu}(\zeta) \leq \overline{\nu}_{d}^R(\overline{\mo{x}}^p,\underline{\mo{x}}^m), 
\end{align}
where $\overline{\nu}_{d}^R(\cdot,\cdot)$ and $\underline{\nu}_{d}^R(\cdot,\cdot)$ are the proposed upper and lower remainder-form  decomposition functions in Algorithm \ref{algorithm1}. On the other hand, note that $\mathcal{\mo{X}}^u \cap [\underline{\mo{x}}^m, \overline{\mo{x}}^p]=\emptyset$, hence the interval $[\underline{\mo{x}}^m, \overline{\mo{x}}^p]$ has been ``ruled out" by Algorithm \ref{algorithm2}. In other words, one of the ``or" conditions in line 8 of Algorithm \ref{algorithm2} must hold for this interval, i.e., 
$\overline{\nu}_{d}^R(\overline{\mo{x}}^p,\underline{\mo{x}}^m)<\underline{y} \ \lor \ \underline{\nu}_{d}^R(\underline{\mo{x}}^m,\overline{\mo{x}}^p)<\overline{y}$.
Combining this and \eqref{eq:cr}, we obtain $\nu(\zeta) < \underline{y} \lor {\mu}(\zeta) > \overline{y}$, which contradicts with $\zeta \in \mathbb{I}{\mathcal{\mo{X}}}^*$ (i.e., $\underline{y} \leq \nu(\zeta) \leq \overline{y}$). 
 \end{proof}
 
 It is noteworthy that our set inversion algorithm can also be used with any applicable inclusion functions (such as $T^f_N,T^f_C,T^f_M, T^{f_d}_L, T^{f_d}_O$) or the best of them (i.e., by independently computing the reachable sets of all inclusion functions and intersecting them; cf. Corollary \ref{cor:min_max_dec}) 
 in place of $T^{f_d}_R$ in Lines 7 and 17. On the other hand, the proposed  $T^{f_d}_R$ (as well as $T^{f_d}_L, T^{f_d}_O$) can also be directly used in place of or in combination with natural inclusions within SIVIA \cite[Chapter 3]{jaulinapplied} to obtain subpavings \syo{(i.e., a union of intervals)}.

\section{Comparison with Existing Inclusion Functions}
\subsection{Comparison with the $T^{f_d}_L$ Inclusion Function}\label{sec:TL}
In this subsection, we compare the performance of the proposed $T^{f_d}_{R}$ with $T^{f_d}_L$ (cf. Proposition \ref{prop:Liren_dec}) through the following Theorem \ref{thm:Liren_mm}. We show that the decomposition function $f^L_d$ introduced in \cite{yang2020accurate} and recapped in Proposition \ref{prop:Liren_dec}, belongs to the family of the remainder-form decomposition functions in \eqref{eq:decomp1} and hence, $T^{f_d}_L$ cannot be tighter than $T^{f_d}_{R}$, which is the tightest decomposition function that belongs to \eqref{eq:decomp1}.
\begin{thm}[$T^{f_d}_L$ vs $T^{f_d}_{R}$] \label{thm:Liren_mm}
Suppose all the assumptions in Theorem \ref{thm:fdf} hold. Then, the following statements are true.
\begin{enumerate}[(i)]
\item $f^L_d(\cdot,\cdot)$ belongs to the family of decomposition functions in \eqref{eq:decomp1}, \mo{i.e.,} for each $i \in \mathbb{N}_{n_x}$, a specific pair of $\mathbf{m}_i^L \in \mathbf{M}_i$ and $h_i^L(\cdot) \in \mathcal{H}_{\mathbf{M}_i}$ corresponds to the decomposition function $f^L_d(\cdot,\cdot)$ in \cite[Theorem 2]{yang2019sufficient} (cf. Proposition \ref{prop:Liren_dec}).
\item The optimal remainder-form decomposition function $T^f_{R}$ is always tighter than (at least as good as) the inclusion function $T^f_L$, induced by the decomposition function $f^L_d$.
\end{enumerate}
\end{thm}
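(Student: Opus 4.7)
The plan is to prove Theorem \ref{thm:Liren_mm} by an explicit embedding of the Liren decomposition $f^L_d$ of \eqref{eq:Lir_dec} into the parametric family \eqref{eq:decomp1} (part (i)), after which part (ii) becomes an immediate consequence of Theorem \ref{thm:tractableDF} together with Lemmas \ref{lem:upper_lowwer_df}--\ref{lem:mm_dec}, which identify $T^{f_d}_R$ as the tightest member of that same family.

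For part (i), I will construct, for each $i\in\mathbb{N}_{n_x}$, a candidate supporting vector $\mathbf{m}^L_i\in\mathbb{R}^{n_z}$ componentwise from the Case dichotomy of Proposition \ref{prop:Liren_dec}, namely $\mathbf{m}^L_{ij}=a_{ij}$ in Case $2$, $\mathbf{m}^L_{ij}=b_{ij}$ in Case $3$, and $\mathbf{m}^L_{ij}=0$ in Cases $1$, $4$ and (for continuous-time) $5$, and I will pair it with the linear remainder $h^L_i(\zeta)\triangleq(\mathbf{m}^L_i)^\top\zeta$. The first verification is membership: using $(\underline{J}^f_C)_{ij}=a_{ij}$ and $(\overline{J}^f_C)_{ij}=b_{ij}$, a direct case inspection shows that $\mathbf{m}^L_{ij}$ always equals either $\min(a_{ij},0)$ (Cases $1$, $2$, and $5$ with $j=i$) or $\max(b_{ij},0)$ (Cases $3$, $4$), so $\mathbf{m}^L_i\in\mathbf{M}^c_i\subset\mathbf{M}_i$ per \eqref{eq:Mc}/\eqref{eq:Mc_CT}, and since $h^L_i$ is linear its Clarke subdifferential is the singleton $\{\mathbf{m}^L_i\}\subset\mathbf{M}_i$, so $h^L_i\in\mathcal{H}_{\mathbf{M}_i}$. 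The second verification is the algebraic identity: linearity of $h^L_i$ reduces \eqref{eq:decomp1} to $f_{d,i}(z,\hat z;\mathbf{m}^L_i,h^L_i)=f_i(\zeta_{\mathbf{m}^L_i}(z,\hat z))+(\mathbf{m}^L_i)^\top\bigl(\zeta_{\mathbf{m}^L_i}(\hat z,z)-\zeta_{\mathbf{m}^L_i}(z,\hat z)\bigr)$, and by checking \eqref{eq:corner} dimension by dimension, the selector $\zeta_{\mathbf{m}^L_i,j}(z,\hat z)$ picks up the first argument $z_j$ precisely when Liren's rule selects $\underline{z}_j$ (Cases $1$, $2$), the second argument $\hat z_j$ precisely when Liren selects $\overline{z}_j$ (Cases $3$, $4$), and $x_j$ when $j=i$ (Case $5$); moreover, the per-dimension contribution $\mathbf{m}^L_{ij}\bigl(\zeta_{\mathbf{m}^L_i,j}(\hat z,z)-\zeta_{\mathbf{m}^L_i,j}(z,\hat z)\bigr)$ matches $(\alpha_{ij}-\beta_{ij})(z_j-\hat z_j)$ in every case, so summing over $j$ recovers \eqref{eq:Lir_dec} exactly.

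For part (ii), Theorem \ref{thm:tractableDF} together with Lemmas \ref{lem:upper_lowwer_df}--\ref{lem:mm_dec} asserts that the lower/upper endpoints of $T^{f_d}_R(\mathbb{I}\mathcal{Z})$ in \eqref{eq:UL_DF} are the max/min over all $\mathbf{m}\in\mathbf{M}^c_i$ of the very expression that, by part (i), evaluates to the corresponding endpoint of $T^{f_d}_L(\mathbb{I}\mathcal{Z})$ when $\mathbf{m}=\mathbf{m}^L_i$; hence $T^{f_d}_R(\mathbb{I}\mathcal{Z})\subseteq T^{f_d}_L(\mathbb{I}\mathcal{Z})$ holds componentwise, which is exactly the tightness statement of Definition \ref{defn:tightness}. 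The main obstacle is the case-by-case bookkeeping in part (i): one must carefully align the four (or five) Liren Cases -- keyed by the sign pattern and relative magnitude of $a_{ij},b_{ij}$ -- with the two-branch selector in \eqref{eq:corner}, and track the sign flip between ``$\mathbf{m}_j\ge\max(b_{ij},0)$'' and ``$\mathbf{m}_j\le\min(a_{ij},0)$'' so that the coefficient $\mathbf{m}^L_{ij}$ multiplying $(z_j-\hat z_j)$ emerges with the correct sign (in particular, the split of Case $2$ vs.\ Case $3$ according to $|a_{ij}|\lessgtr|b_{ij}|$ was engineered precisely to make this alignment feasible); once this matching is pinned down, the remainder of the argument is mechanical.
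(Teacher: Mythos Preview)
Your proposal is correct and follows essentially the same approach as the paper's own proof: both construct the identical supporting vector $\mathbf{m}^L_i$ (with $(\mathbf{m}^L_i)_j$ equal to $a_{ij}$, $b_{ij}$, or $0$ according to Cases $2$, $3$, or $1/4/5$), pair it with the linear remainder $h^L_i(\zeta)=\langle\mathbf{m}^L_i,\zeta\rangle$, verify membership in $\mathbf{M}^c_i\subset\mathbf{M}_i$ and $\mathcal{H}_{\mathbf{M}_i}$, and then check case by case that $\zeta_{\mathbf{m}^L_i}(z,\hat z)$ coincides with Liren's vertex $\zeta$ and that the linear correction term reproduces $(\alpha_i-\beta_i)(z-\hat z)$; part (ii) then follows from Theorem~\ref{thm:tractableDF} in both.
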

\begin{proof}
To prove (i), consider a specific decomposition function from the family of remainder functions in \eqref{eq:decomp1} that is constructed, for each $i \in \mathbb{N}_{n_x}$, with a supporting vector $\mathbf{m}_i^L$ and a linear remainder function ${h}^L_i(\cdot)=\moh{\langle}{\mathbf{m}}^L_{i},\cdot\moh{\rangle}$ as follows: 
\begin{align}\label{eq:m_Lir}
\hspace{-.2cm}&(\mathbf{m}^L_{i})_j\hspace{-.1cm}\\[-0.125cm]
\nonumber\hspace{-.2cm}&=\hspace{-.1cm}\begin{cases} \hspace{-.05cm}\min(({\underline{J}^{f}_C})_{ij},0), &\hspace{-.3cm} \text{if} \,  |\hspace{-.05cm}\min(({\underline{J}^{f}_C})_{ij},0)| \hspace{-.1cm}\leq \hspace{-.05cm} |\hspace{-.05cm}\max(({\overline{J}^{f}_C})_{ij},0)|, \\[-0.025cm]
 \hspace{-.05cm}\max(({\overline{J}^{f}_C})_{ij},0), &\hspace{-.3cm}  \text{if} \,   |\hspace{-.05cm}\max(({\overline{J}^{f}_C})_{ij},0)| \hspace{-.1cm}<\hspace{-.05cm} |\hspace{-.05cm}\min(({\underline{J}^{f}_C})_{ij},0)|,\end{cases}\hspace{-.4cm} 
\end{align}
for all $j\in \mathbb{N}_{n_z}$. 
Clearly, $\mathbf{m}^L_i \in \mathbf{M}^c_i \subset \mathbf{M}_i$ by its definition. Furthermore, it is easy to observe that $\mathbf{m}^L_i $ can be rewritten as
\begin{align}\label{eq:m_Lirr}
\hspace{-.25cm}(\mathbf{m}^L_i)_j\hspace{-.1cm}=\hspace{-.1cm}\begin{cases} 0, & \hspace{-.15cm}\text{if} \ (a_{ij} \geq 0) \lor (b_{ij} \leq 0) \lor (j=i), \\
a_{ij}, &\hspace{-.15cm} \text{if} \ (a_{ij} < 0) \land (b_{ij} > 0) \land (|a_{ij} | \leq |b_{ij}|),
\\ 
 b_{ij} &\hspace{-.15cm} \text{if} \ (a_{ij} < 0) \land (b_{ij} > 0) \land (|b_{ij} | \leq |a_{ij}|), \end{cases}\hspace{-.4cm}
\end{align}
where $a_{ij} \triangleq ({\underline{J}^f_C}_{ij})$, $b_{ij} \triangleq ({\overline{J}^f_C})_{ij}$. Recall that $(a_{ij} \geq 0)$, $(a_{ij} < 0) \land (b_{ij} > 0) \land (|a_{ij} | \leq |b_{ij}|)$, $(a_{ij} < 0) \land (b_{ij} > 0) \land (|b_{ij} | \leq |a_{ij}|)$, $(b_{ij} \leq 0)$ and $j=i$ (continuous-time systems only) correspond to Cases $1$--$5$ in Proposition \ref{prop:Liren_dec}, respectively. 
Then, by 
\eqref{eq:m_Lir} and \eqref{eq:m_Lirr}, we find that $\zeta_{\mathbf{m}^L_i,j}(z,\hat{z})$ in \eqref{eq:corner} coincides  
with $\zeta_j$ in Proposition \ref{prop:Liren_dec}.
Moreover, by \eqref{eq:m_Lirr}, 
\begin{align}
\begin{array}{l}
{h}^L_i(\zeta_{\mathbf{m}^L_i}(\hat{z},{z}))=\langle \mathbf{m}^L_i,\zeta_{\mathbf{m}^L_i}(\hat{z},{z})\rangle =\sum_{j=1}^{\mathbb{N}_{n_z}} \phi^i_j, \\
{h}^L_i(\zeta_{\mathbf{m}^L_i}({z},\hat{z}))=\langle \mathbf{m}^L_i,\zeta_{\mathbf{m}^L_i}({z},\hat{z})\rangle =\sum_{j=1}^{\mathbb{N}_{n_z}}  \psi^i_j, 
 \end{array}
\end{align}
where \syo{$\phi^i_j \hspace{-0.1cm}\triangleq\hspace{-0.1cm} \begin{cases}  0, &\hspace{-0.275cm} \text{Cases} \, 1,\hspace{-0.05cm}4,\hspace{-0.05cm}5, \\ a_{ij} \hat{z}_{j}, &\hspace{-0.275cm} \text{Case} \ 2, \\ b_{ij}z_{j}, &\hspace{-0.275cm} \text{Case} \ 3, \end{cases}$\hspace{-0.15cm} and $\psi^i_j \hspace{-0.1cm}\triangleq\hspace{-0.1cm} \begin{cases}  0, &\hspace{-0.275cm} \text{Cases} \, 1,\hspace{-0.05cm}4,\hspace{-0.05cm}5, \\ a_{ij} {z}_{j}, &\hspace{-0.275cm} \text{Case} \ 2, \\ b_{ij}\hat{z}_{j}, &\hspace{-0.275cm} \text{Case} \ 3. \end{cases}\hspace{-0.1cm}$ 
Consequently,} 
\begin{align*}
{h}^L_i(\zeta_{\mathbf{m}^L_i}(\hat{z},{z})) \hspace{-0.05cm}-\hspace{-0.05cm} {h}^L_i(\zeta_{\mathbf{m}^L_i}({z},\hat{z}))=\textstyle\sum_{j=1}^{\mathbb{N}_{n_z}}  \phi^i_j \hspace{-0.05cm}-\hspace{-0.05cm}\psi^i_j =\sum_{j=1}^{\mathbb{N}_{n_z}} \theta^i_j,
\end{align*}
where $\theta^i_j= \begin{cases}  0, & \text{Cases} \, 1,4,5, \\ a_{ij} (\hat{z}_{j}-z_{j}), & \text{Case} \ 2, \\ b_{ij}(z_{j}-\hat{z}_{j}), & \text{Case} \ 3. \end{cases}$ Then, defining two indicator functions $\alpha^i,\beta^i \in \mathbb{R}^{n_z}$, where for all $j \in \mathbb{N}_{n_z}$, $\alpha^i_{j}\hspace{-0.05cm} \triangleq\hspace{-0.05cm} \begin{cases} 0, &\hspace{-0.15cm} \text{Cases} \, 1,3,4,5, \\ |a_{ij}|, &\hspace{-0.15cm} \text{Case} \ 2,\end{cases}$  and $\beta^i_{j}  \triangleq  \begin{cases} 0, &\hspace{-0.15cm} \text{Cases} \, 1,2,4,5, \\ -|b_{ij}|, &\hspace{-0.15cm} \text{Case} \ 3,\end{cases}$  $\theta^i_j$ can be rewritten as $\theta^i_j=(\alpha^i_j -\beta^i_j)(z_j-\hat{z}_j)$, and hence
\begin{align*}
{h}^L_i(\zeta_{\mathbf{m}^L_i}(\hat{z},{z})) \hspace{-0.05cm}-\hspace{-0.05cm} {h}^L_i(\zeta_{\mathbf{m}^L_i}({z},\hat{z}))=\textstyle\sum_{j=1}^{\mathbb{N}_{n_z}}  \theta^i_j=\langle \alpha^i \hspace{-0.05cm}-\hspace{-0.05cm} \beta^i,z \hspace{-0.05cm}-\hspace{-0.05cm} \hat{z}\rangle.
\end{align*} 
Finally, since $\zeta_{\mathbf{m}^L_i}({z},\hat{z})$ coincides with $\zeta$ in Proposition \ref{prop:Liren_dec}, by \eqref{eq:decomp1}, 
$f^i_d(z,\hat{z};\mathbf{m}^L_i,h^L_i(\cdot))=h^L_i(\zeta_{\mathbf{m}^L_i}(\hat{z},{z}))+f_i(\zeta_{\mathbf{m}^L_i}({z},\hat{z})) -h_i(\zeta_{\mathbf{m}^L_i}({z},\hat{z})))=f_i(\zeta)+\langle \alpha^i-\beta^i,z-\hat{z}\rangle=f^L_{d,i}(z,\hat{z})$, where $f^L_{d,i}(z,\hat{z})$ is the decomposition function introduced in Proposition \ref{prop:Liren_dec} and is defined in \eqref{eq:Lir_dec}.  

 
(ii) The result directly follows from (i) and Theorem \ref{thm:tractableDF}. 
\end{proof}

From the above, we know that $T^{f_d}_L$, which only considers a specific $\mathbf{m}_i\in \mathbf{M}_i$ for all $i\in \mathbb{N}_{n_x}$, cannot be tighter than $T^{f_d}_L$, which considers all $\mathbf{m}_i\in \mathbf{M}_i$. 
Nonetheless, since $T^{f_d}_L$ requires less computation, it can still be useful for 
system\syo{s with large} dimensions, and can also be tighter than $T^f_N,T^f_C$ and $T^f_M$ (see Examples \ref{ex:ex1} and \ref{ex:ex2} below). Further, this 
suggests that when computational resources are limited, it is also possible to consider a strict subset of $\mathbf{M}_i$ on top of the one in $T^{f_d}_L$ to obtain a tighter decomposition function than $T^{f_d}_L$. \syo{In addition, the above theorem indirectly proves that $T^{f_d}_L$ in Proposition \ref{prop:Liren_dec} also applies to ELLC systems.}
\subsection{Comparison with $T^f_N$, $T^f_C$ and $T^f_M$ Inclusion Functions}
In this subsection, we compare the performance of natural inclusions and some of their modifications, i.e., $T^f_N$, $T^f_C$, $T^f_M$ with the (discrete-time) $T^{f_d}_{R}$, via computing the over-approximation for the range of some example functions. It is worth mentioning that we were not able to derive any theoretical results that show the superiority of one over the others. In fact, our simulation results showed that depending on function and its corresponding considered domain, one of them can be tighter than the others in some cases and the opposite holds for other cases. However, in some cases, reflected in the following examples, the $T^{f_d}_R$ typically returns tighter intervals. 
\subsubsection{Composition of Non-Elementary Functions}
In cases where the considered vector field is not a composition of ``elementary functions" (e.g., simple monomials, $\sin(\cdot)$, $\cos(\cdot)$, monotone functions, etc\syo{.}), $T^f_N$, $T^f_C$ and $T^f_M$ are known to be hard 
to compute 
and conservative over-approximations for bounding the constituent functions are often needed, which lead to poor inclusion functions, i.e., large errors. In these cases, it is most likely that $T^{f_d}_{R}$ returns better bounds. The following example describes one such function.  
\begin{exm} \label{ex:ex1}
Consider 
$f(x)=x\arctan{(x^2-2x+5)}$, which is composed of non-elementary functions, and an interval domain $\mathbb{I}\mathcal{X}=[1,3]$. 
In this case, $T^f_N$, $T^f_C$, $T^f_M$, $T^{f_d}_{L}$ and $T^{f_d}_{R}$ return $[-4.7124,4.7124]$, $[1.3258,4.3393]$, $[1.3187,4.2475]$, $[1.2835,2.9461]$ and $[1.1760,2,7468]$, respectively, where the final interval (corresponding to $T^{f_d}_{R}$) is a subset of all others.
\end{exm}
\subsubsection{``Almost" Sign-Stable Functions}
In cases where $f(\cdot)$ can be decomposed into a CJSS constituent and a relatively small additive perturbation,  $T^f_{R}$ \syo{will most likely} return tighter bounds \moh{\syo{than} 
the bounds returned by} $T^f_N$, $T^f_C$ and $T^f_M$. For instance, consider the following example. 
\begin{exm}  \label{ex:ex2}
Consider 
$f(x)=x^3-0.1x$, which is a monotone increasing (and hence CJSS) function on its interval domain $\mathbb{I}\mathcal{X}=[-1,3]$, except on the short interval $[-\sqrt{\frac{0.1}{3}},\sqrt{\frac{0.1}{3}}]$. For this example, $T^f_N$, $T^f_C$, $T^f_M$, $T^{f_d}_{L}$ and $T^{f_d}_{R}$ return $[-8.9000,27.0100]$,$[-49.9000,54.7000]$,$[-49.9000,54.7000]$, $[-1.0300,26.0100]$ and $[-1.0300,26.0100]$, respectively, where $T^{f_d}_L$ and $T^{f_d}_R$ are much tighter than $T^f_N$, $T^f_C$ and $T^f_M$.  
\end{exm}
\subsubsection{Vector Fields with Several Additive Terms}
It is also well-known in the literature that natural, centered and mixed-centered inclusions perform worse for the functions with many additive terms, compared to the ones with fewer additive terms \cite{jaulinapplied,moore2009introduction}. This is not necessarily true for the performance of $T^{f_d}_{R}$. The following example illustrates this fact, where a function with several additive terms is considered.  
\begin{exm} \label{ex:ex3}
Consider $f(x)=x_1x_2x_3+x_1^2x_2+x_2^2x_3+x_3^2x_1+x_1^2x_3+x_3^2x_2+x_2^2x_1+x_1^3+x_2^3+x_3^3$ with an interval domain $\mathbb{I}\mathcal{X}=[-2,2]\times [-2,2] \times [-2,2]$. 
Then, $T^f_N$, $T^f_C$, $T^f_M$, $T^{f_d}_{L}$ and $T^{f_d}_{R}$ return $[-80,80]$, $[-76.45,76.45]$, $[-73.62,73.62]$, $[-176,176]$ and $[-54.4,54.4]$, respectively, where the final interval from $T^{f_d}_{R}$ is the tightest among all.
\end{exm}
\subsubsection{Existence of Closed-Form Decomposition Functions}
Finally, it is notable that our proposed $T^{f_d}_R$ approach (and $T^{f_d}_L$) enables us to find closed-form inclusion functions for a wide class of vector fields. This, can be analytically beneficial, e.g., in convergence analysis for reachable sets or stability analysis in interval observer designs \cite{khejenejad2020full,khejenejad2021def}. This is in contrast to natural, centered and mixed-centered inclusions (and also $T^{f_d}_O$ in general), where a closed-form inclusion function for general classes of functions, is often not available. 
\section{Applications} \label{sec:application}
\subsection{Application to Constrained Reachability Analysis}
Consider the following constrained bounded-error system:
\begin{gather} \label{eq:sys1}
x_t^+=f(x_t,u_t,w_t),\quad
\mu(x_t,u_t) \in \mathcal{Y}_t, 
\end{gather}
where $x_t^+ \triangleq x_{t+1}$ if \eqref{eq:sys1} is a discrete-time \sy{system} (with sampling time $\delta t$) and $x_t^+ \triangleq \dot{x}_t$ if \eqref{eq:sys1} is a continuous-time system, $x_t \in \mathbb{R}^{n_x}$ with $x_0 \in [\underline{x}_0,\overline{x}_0]$ and $u_t \in \mathbb{R}^{n_u}$ are state and known input signals, $w_t \in  [\underline{w},\overline{w}] \in \mathbb{I}\mathbb{R}^{n_w}$ 
is a bounded process disturbance signals, 
$\mathcal{Y}_t\subseteq[\underline{y}_t,\overline{y}_t] \in \mathbb{IR}^{n_y}$ is the time-varying, \emph{uncertain} state interval constraint and 
$f:\mathbb{R}^{n_x+n_u+n_w} \to \mathbb{R}^{n_x},\mu:\mathbb{R}^{n_x+n_u} \to \mathbb{R}^{n_{\mo{\mu}}}$ are known vector fields. 
The following proposition shows how to apply Algorithms \ref{algorithm1}--\ref{algorithm2}, i.e., the   mixed-monotone remainder-form decomposition function construction and the set inversion algorithms, to compute \syo{over-}approximations of the reachable sets\syo{/framers} of the states for the system in \eqref{eq:sys1}. 

\begin{prop}\label{prop:reach}
Consider the system \eqref{eq:sys1} with initial state $x_0 \in \mathbb{I}\mathcal{X}_0 \triangleq  [\underline{x}_0,\overline{x}_0]$ and let \syo{$f(z_t)\triangleq \tilde{f}(x_t,u_t,w_t)$} and $\nu(x_t)\triangleq \mu(x_t,u_t)$ with \mo{$z_t\triangleq[x_{\mo{t}}^\top \ w_t^\top]^\top$} and known $u_t$. Suppose that Assumptions \ref{assum:ELLC} and \ref{assum:jacobian-bounds} hold
 and $\epsilon$ is a chosen small positive threshold. Then, for all $t\ge0,CR^f(t,\mathbb{I}\mathcal{X}_0) \subset \mathbb{I}\mathcal{X}^u_t \triangleq [\underline{x}^u_t,\overline{x}^u_t]$, 
where $CR^f(t,\mathbb{I}\mathcal{X}_0) \triangleq
\{\phi(t, x_0, {w}_{\mo{t}}) \mid x_0 \in \mathbb{I}\mathcal{X}_0, \mu(x_t,u_t) \in \syo{\mathcal{Y}_t \subseteq [\underline{y}_t,\overline{y}_t]} \text{ and } w_t \in \mathbb{I}\mathcal{W},  \forall t\ge 0\}$ is the constrained reachable set at time $t$ of \eqref{eq:sys1} 
when initialized within $\mathbb{I}\mathcal{X}_0$ and $ \mathbb{I}\mathcal{X}^u_t \triangleq [\underline{x}^u_t,\overline{x}^u_t]$ is the solution to the following constrained embedding system:
\begin{align*}
\begin{bmatrix}{\overline{x}}_t^{p+} \\ {\underline{x}}_t^{p+} \end{bmatrix}&=\hspace{-0.1cm}\begin{bmatrix} \overline{\syo{f}}_d(\begin{bmatrix}(\overline{x}_t^u)^\top \, \overline{w}^\top\end{bmatrix}^\top\hspace{-0.05cm},\begin{bmatrix}(\underline{x}_t^u)^\top \, \underline{w}^\top \end{bmatrix}^\top) \\ \underline{\syo{f}}_d(\begin{bmatrix}(\underline{x}_t^u)^\top \, \underline{w}^\top \end{bmatrix}^\top\hspace{-0.05cm},\begin{bmatrix}(\overline{x}_t^u)^\top \, \overline{w}^\top\end{bmatrix}^\top) \end{bmatrix}\hspace{-0.05cm}, \ \begin{bmatrix}\overline{x}^p_0\\ \underline{x}^p_0\end{bmatrix}\hspace{-0.1cm}=\hspace{-0.1cm}\begin{bmatrix}\overline{x}_0\\ \underline{x}_0\end{bmatrix},\\
(\overline{x}^u_t,\underline{x}^u_t)&=\text{SET-INV}({\nu}(\cdot),\overline{J}^{{\nu}}_C,\underline{J}^{{\nu}}_C,\overline{x}^p_t,\underline{x}^p_t,\overline{y}_t,\underline{y}_t,\epsilon),
\end{align*}
where $(\overline{\syo{f}}_d(\cdot,\cdot),\underline{\syo{f}}_d(\cdot,\cdot))=T^{\syo{f}_d}_R(\syo{f}(\cdot),\overline{J}^{{\syo{f}}}_C,\underline{J}^{\syo{f}}_C,\cdot,\cdot)$ is the discrete-time or continuous-time decomposition-based inclusion function in Algorithm \ref{algorithm1} and the SET-INV function in Algorithm \ref{algorithm2} is based on the discrete-time $T^{\nu_d}_R(\nu(\cdot),\overline{J}^{{\nu}}_C,\underline{J}^{{\nu}}_C,\cdot,\cdot)$ from Algorithm \ref{algorithm1}. Consequently,
the constrained system state trajectory $x_t$ 
satisfies $\underline{x}^u_t \le x_t \le \overline{x}^u_t$ at all times $t$.
\end{prop}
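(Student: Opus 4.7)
The plan is to proceed by induction on $t$ to establish the invariant $CR^f(t,\mathbb{I}\mathcal{X}_0) \subseteq \mathbb{I}\mathcal{X}^u_t$, interleaving two already-established results at each time step: the State Framer Property (Proposition \ref{cor:embedding}) for the ``propagate'' (unconstrained prediction) step, and the correctness of the set inversion algorithm (Proposition \ref{lem:set_inv}) for the ``update'' (constraint-based refinement) step. The remainder-form decomposition functions returned by Algorithm \ref{algorithm1} are valid mixed-monotone upper and lower decomposition functions by Theorem \ref{thm:tractableDF}, and the inclusion $T^{\nu_d}_R$ is a valid inclusion function for $\nu$, which together justify invoking these two propositions as sub-routines inside the constrained embedding recursion.

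For the base case $t=0$, the initial condition $[\overline{x}^p_0,\underline{x}^p_0]^\top = [\overline{x}_0,\underline{x}_0]^\top$ gives $\mathbb{I}\mathcal{X}^p_0 = \mathbb{I}\mathcal{X}_0$. Proposition \ref{lem:set_inv} applied to SET-INV with constraint set $[\underline{y}_0,\overline{y}_0]$ yields
\begin{align*}
\mathbb{I}\mathcal{X}^u_0 \supseteq \{x \in \mathbb{I}\mathcal{X}_0 \mid \mu(x,u_0) \in [\underline{y}_0,\overline{y}_0]\} \supseteq CR^f(0,\mathbb{I}\mathcal{X}_0),
\end{align*}
where the last inclusion is immediate from the definition of $CR^f$. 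For the inductive step, assume $CR^f(t,\mathbb{I}\mathcal{X}_0) \subseteq \mathbb{I}\mathcal{X}^u_t$. Propagating via the (discrete-time update or continuous-time ODE) embedding system driven by $\overline{f}_d,\underline{f}_d$, Proposition \ref{cor:embedding} guarantees that the \emph{unconstrained} reachable set from $\mathbb{I}\mathcal{X}^u_t$ one step ahead (respectively, at time $t+\delta t$) is contained in $\mathbb{I}\mathcal{X}^p_{t+1} \triangleq [\underline{x}^{p+}_t,\overline{x}^{p+}_t]$. Since the constrained reachable set is always a subset of the unconstrained one, $CR^f(t+1,\mathbb{I}\mathcal{X}_0) \subseteq \mathbb{I}\mathcal{X}^p_{t+1}$. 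Applying SET-INV at time $t+1$ and invoking Proposition \ref{lem:set_inv} once more gives
\begin{align*}
\mathbb{I}\mathcal{X}^u_{t+1} \supseteq \{x \in \mathbb{I}\mathcal{X}^p_{t+1} \mid \mu(x,u_{t+1}) \in [\underline{y}_{t+1},\overline{y}_{t+1}]\} \supseteq CR^f(t+1,\mathbb{I}\mathcal{X}_0),
\end{align*}
closing the induction and thereby yielding $\underline{x}^u_t \le x_t \le \overline{x}^u_t$ for every constrained trajectory and every $t\ge 0$.

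The main obstacle will be the continuous-time case, where ``propagation'' is not a single discrete update but the flow of the ODE embedding system over a time interval, and the state constraint $\mu(x_t,u_t)\in\mathcal{Y}_t$ must be interpreted pointwise in time rather than only at discrete instants. There the induction becomes a comparison-theorem argument: the continuous-time part of Proposition \ref{cor:embedding} frames every trajectory of the underlying ODE by $[\underline{x}^p_t,\overline{x}^p_t]$ between refinement instants, while the intermittent application of SET-INV at constraint-evaluation times preserves the framer inequalities by virtue of Proposition \ref{lem:set_inv}. One must verify that the intersection with the constraint set does not violate the forward-invariance/monotonicity properties required by the continuous-time framer, which holds because both operations only shrink the interval while still enclosing every admissible trajectory.
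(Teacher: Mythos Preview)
Your proposal is correct and follows the same approach as the paper: the paper's own proof is a one-liner that simply cites Propositions~\ref{cor:embedding}--\ref{cor:dec_inc}, Theorems~\ref{thm:fdf}--\ref{thm:tractableDF} and Proposition~\ref{lem:set_inv}, and your induction argument spells out precisely how these pieces combine (framer property for propagation, set-inversion correctness for the update, Theorem~\ref{thm:tractableDF} for validity of $\overline{f}_d,\underline{f}_d$). Your additional discussion of the continuous-time subtlety is more detailed than what the paper provides, but is in the same spirit.
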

\begin{proof}
The results directly follow from applying \sy{Propositions \ref{cor:embedding}--\ref{cor:dec_inc},} 
Theorems \ref{thm:fdf}--\ref{thm:tractableDF} and Lemmas \ref{lem:mm_dec}--\ref{lem:set_inv}. 
\end{proof}
\subsection{Application to Interval Observer Design}
Now, we consider the interval observer design problem for the following bounded-error system:
\begin{align} \label{eq:sys2}
x^+=f(x_t,u_t,w_t),\quad
y_t={\mu}(x_t,u_t) +Vv_t, 
\end{align}
where its state equation is similar to the system \eqref{eq:sys1}, but instead of a known state constraint, an observation/measurement signal $y_t \in \mathbb{R}^{n_\syo{{y}}}$ is known/measured with a 
known observation function ${\mu}(\cdot):\mathbb{R}^{n_x+n_u} \to \mathbb{R}^{n_{\syo{y}}} $, known matrix $V \in \mathbb{R}^{n_{\syo{y}} \times \syo{n}_v}$ and measurement noise signal $v_t \in [\underline{v},\overline{v}] \in \mathbb{IR}^{n_v}$. It can be easily verified that the observation equation can be equivalently written as ${\nu}(x_t) \triangleq {\mu}(x_t,u_t) \in \mathcal{Y}_t \triangleq [y_t-\overline{s},y_t-\underline{s}]$, where $\overline{s}=V^{+}\overline{v}-V^{-}\underline{v}$, $\underline{s}=V^{+}\underline{v}-V^{-}\overline{v}$, $V^{+} \triangleq \max(V,\mathbf{0}_{n_v})$ and $V^{-} \triangleq V^{+} - V$, with $\mathbf{0}_{n_v}$ being a zero vector in $\mathbb{R}^{n_v}$ \cite{efimov2013interval}. This transforms the system \eqref{eq:sys2} into the form of \eqref{eq:sys1} and hence, Proposition \ref{prop:reach} directly applies, where $[\underline{x}^p_t,\overline{x}^p_t]$ is the state interval from the prediction/propagation step and $[\underline{x}^u_t,\overline{x}^u_t]$ is the updated state interval after a measurement update step. Further, if \eqref{eq:sys2} is a sampled-data system, i.e., the system dynamics is continuous and the observations are sampled in discrete-time, our proposed approach still directly applies \syo{with minor modifications}.

\section{Simulations}
In this section, we compare the performance\syo{s} of $T^f_N$ (natural inclusions; cf. Proposition \ref{prop:natural}), $T^f_C,T^f_M$ (centered and mixed-centered inclusions; cf. Proposition \ref{prop:natural_inclusions}), $T^{f_d}_L$ (decomposition functions proposed in \cite{yang2019sufficient}; cf. Proposition \ref{prop:Liren_dec}), $T^{f_d}_{R}$ (our proposed remainder-form decomposition function in Algorithm \ref{algorithm1}), $T^f_{S_1}$ (the first proposed bounding approach in \cite[\syo{Theorem 1}]{yang2020accurate}, if applicable), $T^f_{S_2}$ (the second proposed approach in \cite[\syo{Theorem 2}]{yang2020accurate}, if applicable) and $T^{f_d}_{O}$ (the tight decomposition functions proposed in \cite[Theorem 2]{yang2019tight} for discrete-time and in \cite[Theorem 1]{abate2020tight} for continuous-time systems, when computable; cf. Proposition \ref{prop:tight_dec}) in computing the reachable sets of several unconstrained and constrained dynamical systems in the form of \eqref{eq:main_sys}. Further, by the intersection property in Corollary \ref{cor:min_max_dec}, we can also intersect the reachable sets of all applicable inclusion functions (except $T^{f_d}_{O}$) for comparison. Note that we only compare methods in the literature that use \emph{first-order} (generalized) gradient information. The consideration of higher-order information is a subject of our future work.
\subsection{Van Der Pol System}
Consider the following discretized Van der Pol system \cite{shen2017rapid}:
\begin{align} \label{eq:Van_der}
\begin{array}{ll}
x_{1,t+1}=x_{1,t}+\delta t \, x_{2,t},\\
x_{2,t+1}=x_{2,t}+\delta t \, ((1-x^2_{1,t})x_{2,t}-x_{1,t}),
\end{array}
\end{align}
with $\delta t =0.1$ s and $x_0\in[1.15,1.4] \times [2.05,2.3]$. 
Starting from the initial intervals, the results for computing reachable intervals, using several applicable inclusion functions, are depicted in \syo{Figure} \ref{fig:van_der_pol}. Unfortunately, $T^f_{S_1}$ and $T^f_{S_2}$ are not valid (applicable) here, due to the lack of required monotonicity assumptions (cf. \cite[conditions (6) and (16)]{yang2020accurate}), while as expected, $T^{f_d}_{O}$ (that is computable here,  since the corresponding optimization problems can be analytically solved by computing and comparing all stationary points of the vector fields and the boundaries) returns the tightest bounds. Moreover, our proposed $T^{f_d}_{R}$ notably returns tighter bounds than all applicable methods, except $T^{f_d}_{O}$ and the best of $T^f_N$--$T^{f_d}_{R}$, and seems to offer significant improvement over natural, centered and mixed-centered inclusions as well as $T^{f_d}_L$. Further, ``the best of $T^f_N$--$T^{f_d}_{R}$" approach is able to get very close to the tightest set obtained by $T^{f_d}_{O}$. 
\begin{figure}[t]
\begin{center}
\includegraphics[scale=0.225,trim=39mm 0mm 35mm 5mm,clip]{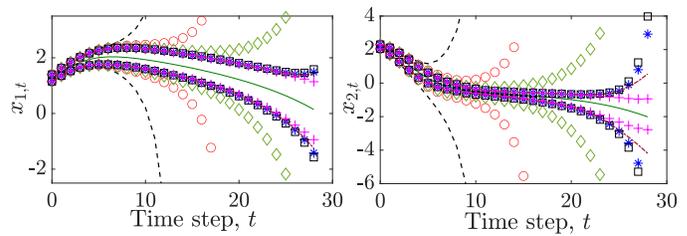}
\vspace{-0.1cm}
\caption{Upper and lower bounds on ${x}_{1}$ and $x_2$ in System \eqref{eq:Van_der} (Example A: Van der Pol system), when applying $T^f_N (- -)$, $T^f_C$ ({\color{red}$\circ$}), $T^f_M$ ({\color{ForestGreen}$\diamond$}), $T^{f_d}_L$ ($\square$), $T^{f_d}_{R}$ ({\color{blue}$\ast$}), the best of $T^f_N$--$T^{f_d}_{R}$ ({\color{purple}$\cdot$-}) and $T^{f_d}_{O}$ ({\color{magenta}+}), as well as the midpoint trajectory ({\color{ForestGreen}--}). \label{fig:van_der_pol}}
\end{center}
\vspace{-0.15cm}
\end{figure}

\subsection{Example 3 in \cite{yang2020accurate} with Modeling Redundancy}\label{ex:redundancy}
\begin{figure}[t]
\begin{center}
\vspace{-0.1cm}
\begin{subfigure}[b]{0.475\textwidth}
\includegraphics[scale=0.221,trim=43mm 0mm 35mm 10mm,clip]{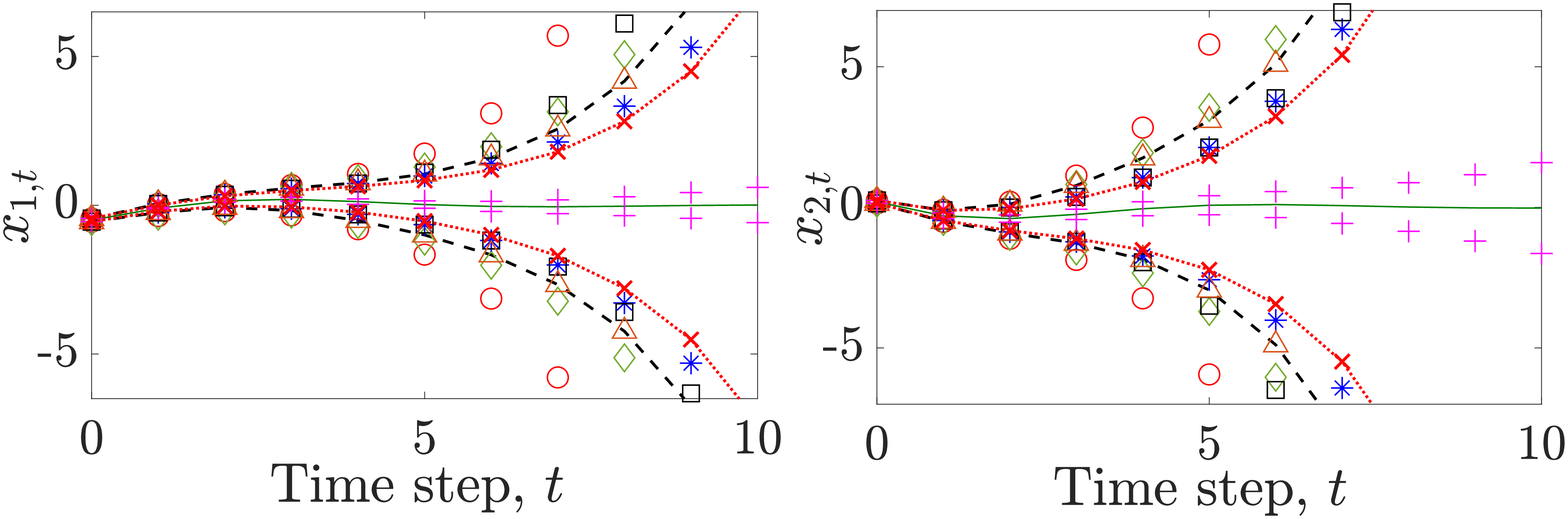}
\vspace{-0.1cm}
\caption{System B without observations \eqref{eq:ex3_SC}--\eqref{eq:aug_Scott_ex3}. 
\label{fig:ex3_scott_1}}
\end{subfigure}
%
\begin{subfigure}[b]{0.475\textwidth}
\includegraphics[scale=0.221,trim=45mm 0mm 35mm 0mm,clip]{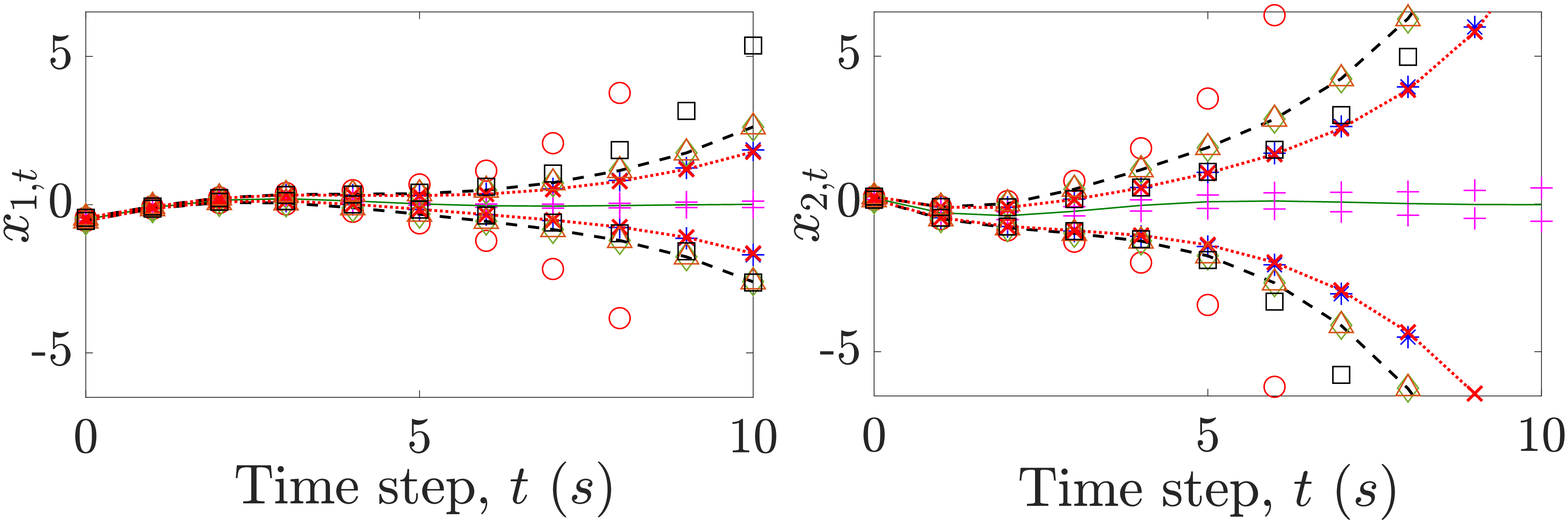}
\vspace{-0.1cm}
\caption{System B with observations \eqref{eq:ex3_SC}--\eqref{eq:ex3_SC_2}. 
\label{fig:ex3_scott_2}}
\end{subfigure}
\caption{Upper and lower bounds/framers for ${x}_{1}$ and $x_2$ in Example B, 
when applying $T^f_N (- -)$, $T^f_{S_1}$ ({\color{brown}$\triangle$}), $T^f_C$ ({\color{red}$\circ$}), $T^f_M$ ({\color{ForestGreen}$\diamond$}), $T^{f_d}_L$ ($\square$), $T^{f_d}_{R}$ ({\color{blue}$\ast$}), $T^{f_d}_{R}$ with manufactured redundancies ({\color{red}$\times\cdot$}) and $T^{f_d}_{O}$ ({\color{magenta}+}), as well as the midpoint trajectory ({\color{ForestGreen}--}). \label{fig:ex3_scott_all}}
\end{center}
\vspace{-0.45cm}
\end{figure}
Now consider the following discrete-time dynamical system from \cite{alamo2005guaranteed,yang2020accurate} with bounded noise:
\begin{align} \label{eq:ex3_SC}
x_{t+1}=\begin{bmatrix} 0 & -0.5 \\ 1 &1+0.3\mo{\lambda}_t \end{bmatrix} x_t+0.02\begin{bmatrix} -6 \\ 1 \end{bmatrix} w_t,
\end{align}
where $w_t$, $\mo{\lambda}_t \in [-0.001,0.001]$ and $x_0 \in [-0.55,-0.445] \times [0.145,0.248]$. The approximated reachable sets are depicted in \syo{Figure} \ref{fig:ex3_scott_1}. Here, $T_{S_1}$ is applicable and returns the exact same bounds as $T^f_N$ (natural inclusion), but $T^f_{S_2}$ is not applicable due to the lack of monotonicity (cf. \cite[\syo{(16)}
]{yang2020accurate}). The tight $T^{f_d}_{O}$ is again computable and, as expected, returns the tightest possible intervals. Again, our proposed approach, $T^{f_d}_R$ outperforms all applicable ones, except for the tight $T^{f_\syo{d}}_{O}$. To further improve our results and inspired by the literature on modeling redundancy, e.g., \cite{scott2013bounds,yang2020accurate}, we consider a manufactured \emph{redundant} state $z_t$ chosen as: 
\begin{align}
 \label{eq:const_ex3_Sc} z_t&=x_{1,t}+6x_{2,t}, 
\end{align}
which implies that:
 \begin{align} 
 z_{t+1}&=z_t+5x_{1,t}+(1.8v_t-0.5)x_{2,t}. \label{eq:aug_Scott_ex3} 
 \end{align} 
 Augmenting \eqref{eq:aug_Scott_ex3} with the original system \eqref{eq:ex3_SC}, we consider the over-approximation of the reachable sets for the augmented system, subject to \eqref{eq:const_ex3_Sc} as a constraint, using our proposed $T^{f_d}_{R}$ for the propagation step, as well as Algorithm \ref{algorithm2} for set inversion (refinement/update). The results in \syo{Figure} \ref{fig:ex3_scott_1} show an improvement when considering the manufactured redundant variable \syo{when} compared to 
 using $T^{f_d}_{R}$ without any redundancies. It can also be shown that different manufactured states result in different amounts of improvement. Thus, as future work, we will also consider the development of a \emph{principled} approach for choosing manufactured/redundant variables. 

Next, we consider \eqref{eq:ex3_SC}--\eqref{eq:aug_Scott_ex3} with an observation equation:
 \begin{align} \label{eq:ex3_SC_2}
y_t=1.6x_{1,t}+0.3x_{2,t}+v_t,
\end{align}
where $y_t$ is a \emph{known} measurement/observation signal at time step $t$ with measurement noise signal $v_t \in [-.05,0.05]$. \syo{Figure} \ref{fig:ex3_scott_2} shows the approximated upper and lower bounds for the states of the system \eqref{eq:ex3_SC_2}, using the same inclusion functions that we applied to the system \eqref{eq:ex3_SC}, as well as applying Algorithm \ref{algorithm2} for the set inversion/refinement procedure. As expected and as can be seen by comparing Figures \ref{fig:ex3_scott_1} and \ref{fig:ex3_scott_2}, the additional measurement information enables us to tighten the resulting intervals for all inclusion functions.   

\subsection{Example 2.11 in \cite{jaulinapplied}}
Next, we consider the following discrete-time dynamical system in \cite[Example 2.11]{jaulinapplied}: 
\begin{align} \label{eq:Jaulin_ex_2_11}
\begin{array}{ll}
x_{1,t+1}=x_{1,t}^2+x_{1,t}e^{x_{2,t}}-x_{2,t}^2,\\
x_{2,t+1}=x_{1,t}^2-x_{1,t}e^{x_{2,t}}+x_{2,t}^2, 
\end{array}
\end{align}
where $x_{0} \in [0.12,0.121] \times [0.182,0.185]$. Here, $T^f_{S_1}$ and $T^f_{S_2}$ approaches are not applicable, due to lack of the required monotonicity conditions, while the tight $T^{f_d}_{O}$ is not computable since the stationary/critical points are not 
\syo{analytically} solvable. On the other hand, our proposed remainder-form inclusion function, $T^{f_d}_{R}$, can be tractably computed. \syo{Figure} \ref{fig:ex_jaulin} depicts the resulting reachable sets, where we observe that $T^{f_d}_{R}$ outperforms all other applicable and computable approaches, and \mo{is} comparable to the ``best of all"  approach (via intersection). 

\begin{figure}[t]
\begin{center}
 \includegraphics[scale=0.2175,trim=33mm 0mm 35mm 0mm,clip]{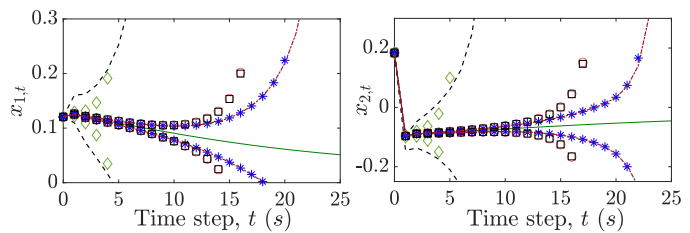}
\vspace{-0.1cm}
\caption{Upper and lower bounds on ${x}_{1}$ and $x_2$ in System \eqref{eq:Jaulin_ex_2_11} (Example C), when applying $T^f_N (- -)$, $T^f_C$ ({\color{red}$\circ$}), $T^f_M$ ({\color{ForestGreen}$\diamond$}), $T^{f_d}_L$ ($\square$), $T^{f_d}_{R}$ ({\color{blue}$\ast$}) and the best of $T^f_N$--$T^{f_d}_{R}$ ({\color{purple}$\cdot$-}), as well as the midpoint trajectory ({\color{ForestGreen}--}). \label{fig:ex_jaulin}}
\end{center}
\vspace{-0.15cm}
\end{figure}

\subsection{Continuous-Time System in \cite{abate2020tight}}
As the next example, we consider the following continuous-time dynamical system from \cite{abate2020tight}:
\begin{align} \label{eq:Coogan_t}
\begin{bmatrix} \dot{x}_{1,t} \\ \dot{x}_{2,t} \\ \dot{x}_{3 ,t} \end{bmatrix}=\begin{bmatrix} w_{1\syo{,t}}x_{2,t}^2-x_{2,t}+w_{2,t} \\x_{3,t}+2 \\ x_{1,t}-x_{2,t}-w_{1,t}^3 \end{bmatrix},
\end{align}
with ${x}_0\in [-\frac{1}{2},\frac{1}{2}]^3$ and $w_t\in [-\frac{1}{4},0] \times [0,\frac{1}{4}]$. Figure \ref{fig:Ex_Coogan_t} depicts the approximations of the reachable sets when applying $T^f_N$, $T^f_C$, $T^f_M$, $T^{f_d}_L$, $T^{f_d}_R$, the best of $T^f_N$--$T^{f_d}_R$ and $T^{f_d}_O$. As expected, the tight $T^{f_d}_O$, which is implemented using the corresponding embedding functions given in \cite[Section VI]{abate2020tight}, returns the tightest sets/framers. Further, among the other inclusion functions, $T^{f_d}_R$ has the the best performance and it \syo{can be} 
slightly improved \syo{by} 
using the best of $T^f_N$--$T^{f_d}_R$.   
\begin{figure}[t]
\vspace{-0.3cm}
\begin{center}
\includegraphics[scale=0.212,trim=66mm 0mm 20mm 0mm,clip]{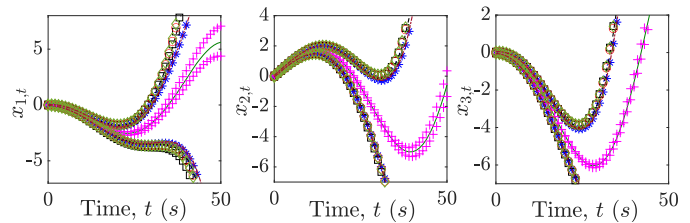}
\vspace{-0.1cm}
\caption{Upper and lower bounds for ${x}_{1}$, $x_2$ and $x_3$ in System \eqref{eq:Coogan_t} (Example D), when applying $T^f_N (- -)$, $T^f_C$ ({\color{red}$\circ$}), $T^f_M$ ({\color{ForestGreen}$\diamond$}), $T^{f_d}_L$ ($\square$), $T^{f_d}_{R}$ ({\color{blue}$\ast$}), the best of $T^f_N$--$T^{f_d}_{R}$ ({\color{purple}$\cdot$-}) and $T^{f_d}_{O}$ ({\color{magenta}+}), as well as the midpoint trajectory ({\color{ForestGreen}--}).  \label{fig:Ex_Coogan_t}}
\end{center}
\vspace{-0.15cm}
\end{figure}

\subsection{Unicycle System}
Next, we are interested in computing the reachable sets for a well-known continuous-time system, namely the unicycle-like mobile robot, e.g., in \cite{chen2018nonlinear,jetto1999development}, 
with two driving wheels, mounted on the left and right sides of the robot, with their common axis passing through the center of the robot. The dynamics of such a system can be described as in \cite{jetto1999development}:  
\begin{gather} \label{eq:unicycle}
\begin{array}{c}
\dot{s}_{x,t}=\phi_{\omega,t}\cos \theta_t +w_{x,t}, \
\dot{s}_{y,t}=\phi_{\omega,t}\sin \theta_t +w_{y,t}, \\
\dot{\theta}_t =\phi_{\theta,t}+w_{\theta,t},
\end{array}
\end{gather}
with state $x_t \triangleq [s_{x,t} \ s_{y,t} \ \theta_t]^\top$, where $s_{x,t}$ and $s_{y,t}$ are the $X$ and $Y$ coordinates of the main axis mid-point between the two driving wheels and $\theta_t$ is the angle between the robot forward axis and the $X$-direction, while $\phi_{\omega,t}$ and $\phi_{\theta,t}$ are the displacement and angular velocities of the robot (as known inputs), respectively, and $w_t=[w_{x,t} \ w_{y,t} \ w_{\theta,t}]^\top$ is the process noise vector. Setting $\phi_{\omega,t}=0.3$, $\phi_{\theta,t}=0.15$,  $w_{x,t}=0.2(0.5\rho_{x_{1,t}}-0.3)$, $w_{y,t}=0.2(0.3\rho_{x_{2,t}}-0.2)$ and $w_{\theta,t}=0.2(0.6\rho_{x_{3,t}}-0.4)$, with $\rho_{x_{l,t}} \in [0,1]$ $(l=1,2,3)$ and initial state $x_0=[0.1 \ 0.2 \ 1]^\top$, Figure \ref{fig:unicycle_1} shows the over-approximations of the reachable sets for system \eqref{eq:unicycle}, using the various methods discussed in this paper. As can be observed, $T^{f_d}_O$, which is computable for this example, returns the tightest intervals. Moreover, in some intervals, natural inclusions and their modifications, return tighter bounds than $T^{f_d}_R$. However, by taking their intersection and computing the best of $T^f_N$--$T^{f_d}_R$, we obtain further improvements. 

Then, to illustrate the effectiveness of our proposed set inversion algorithm for continuous-time systems, we consider observations/measurements similar to \cite{chen2018nonlinear}, as follows. In the $X$--$Y$ plane, two known points, denoted as $(s_{x_i} , s_{y_i})$ $(i = 1, 2)$, are chosen as markers/landmarks. Then, the distance from the robot's planar Cartesian coordinates $(s_{x,t}, s_{y,t})$ to each marker $(s_{x_i}, s_{y_i})$ can be expressed as $d_{i,t}=\sqrt{(s_{x_i}-s_{x,t})^2+(s_{y_i}-s_{y,t})^2}$. Furthermore, the azimuth $\phi_{i,t}$ at time $t$ can be related to the current system state variables $s_{x,t}, s_{y,t}$ and $\theta_t$ as $\phi_{i,t}=\theta_t-\arctan(\frac{s_{y_i}-s_{y,t}}{s_{x_i}-s_{x,t}})$. Treating both the distance $d_{i,t}$ and the azimuth $\phi_{i,t}$ as the measurements, as well as considering measurement disturbances\syo{/noise} $v_t$, the \syo{nonsmooth,} nonlinear observation/measurement equation can be written as:
\begin{align}\label{eq:observation_unicy}
y_t=[d_{1,t} \ \phi_{1,t} \ d_{2,t} \ \phi_{2,t}]^\top +v_t,
\end{align}
with 
$v_{1,t}=0.02 \rho_{{y_1},t}-0.01$, $v_{2,t}=0.03 \rho_{{y_2},t}-0.01$, $v_{3,t}=0.03 \rho_{{y_3},t}-0.02$, $v_{4,t}=0.05 \rho_{{y_4},t}-0.03$ and $\rho_{{y_k},t} \in [0,1]$ $(k=1,2,3,4)$. Now, applying all the methods along with the set inversion approach in Algorithm \ref{algorithm2} to the constrained system \eqref{eq:unicycle}--\eqref{eq:observation_unicy}, one can observe that by taking advantage of the observations, the reachable set approximations have been significantly improved in Figure \ref{fig:unicycle_2} (with observations), when compared to Figure \ref{fig:unicycle_1} (without observations). 
\begin{figure}[t]
\begin{center}
\begin{subfigure}[b]{0.475\textwidth}
\includegraphics[scale=0.218,trim=48mm 0mm 50mm 0mm,clip]{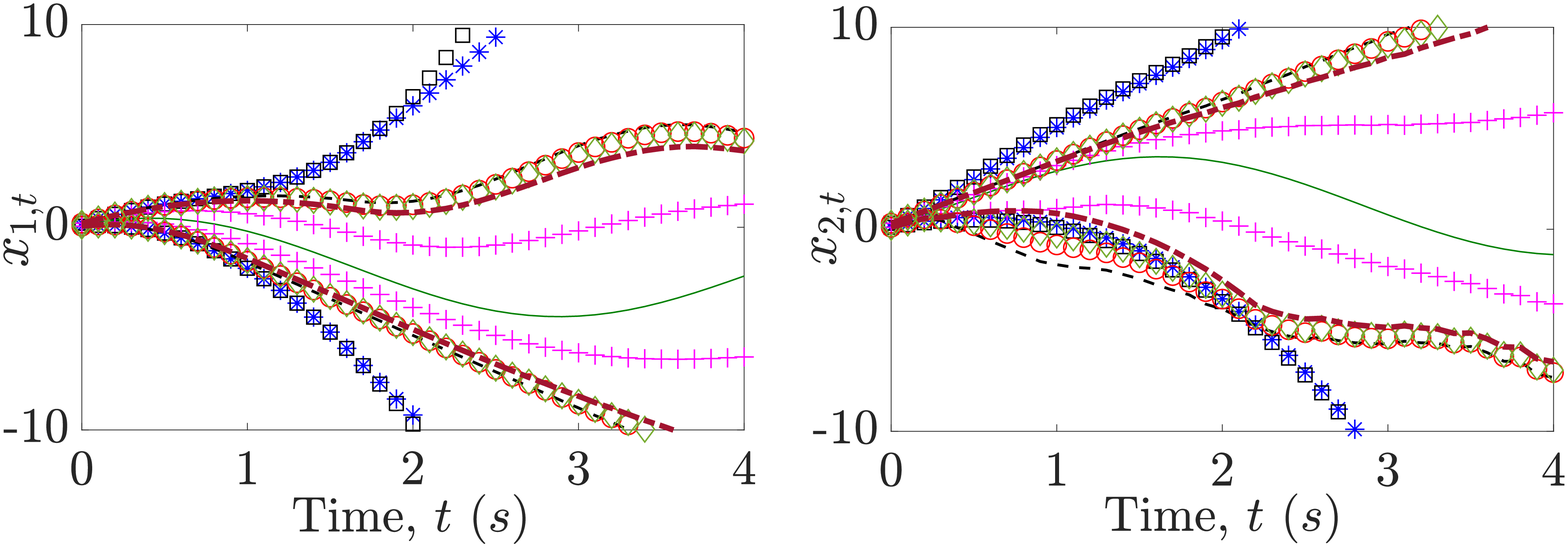}
\vspace{-0.1cm}

\caption{
Unicycle system without observation \eqref{eq:unicycle}.
\label{fig:unicycle_1}}
\end{subfigure}
\begin{subfigure}[b]{0.475\textwidth}
\includegraphics[scale=0.218,trim=48mm 0mm 50mm -10mm,clip]{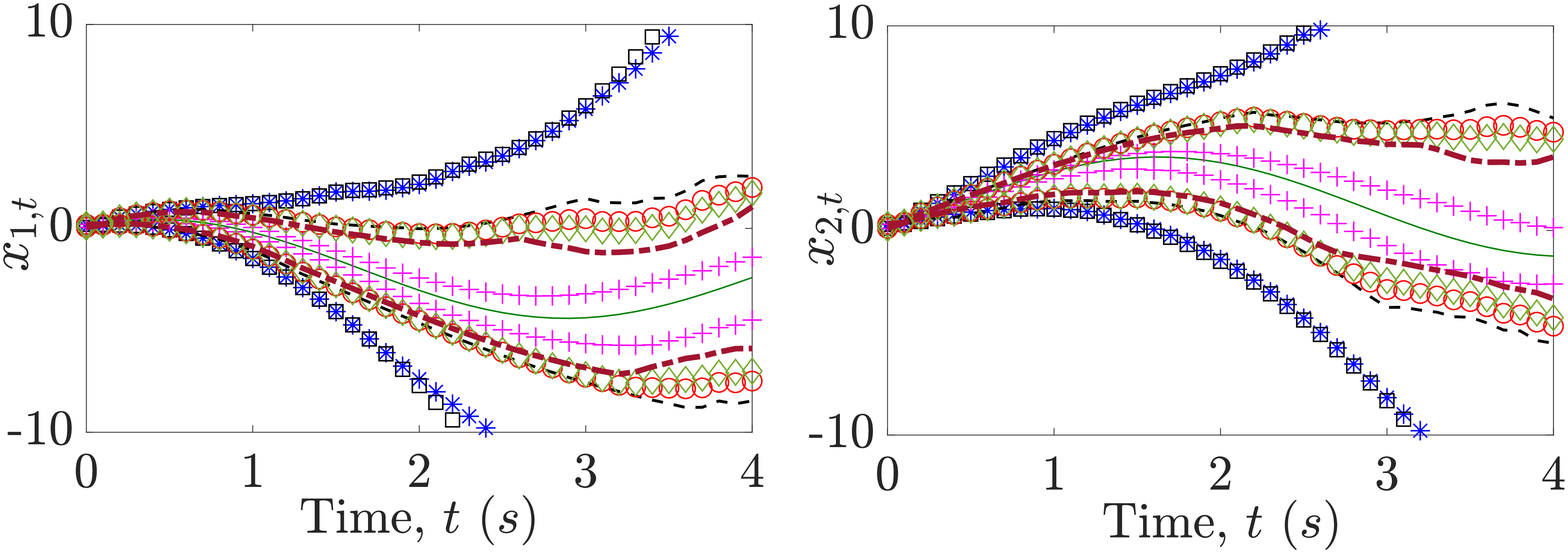} 
\vspace{-0.5cm}
\caption{
Unicycle system with observations \eqref{eq:unicycle}--\eqref{eq:observation_unicy}. 
\label{fig:unicycle_2}}
\end{subfigure}
\end{center}
\vspace{-0.15cm}
\caption{Upper and lower bounds on ${x}_{1}$ and $x_2$ in the unicycle system (Example E), 
when applying $T^f_N (- -)$, $T^f_C$ ({\color{red}$\circ$}), $T^f_M$ ({\color{ForestGreen}$\diamond$}), $T^{f_d}_L$ ($\square$), $T^{f_d}_{R}$ ({\color{blue}$\ast$}), the best of $T^f_N$--$T^{f_d}_{R}$ ({\color{purple}$\cdot$-}) and $T^{f_d}_{O}$ ({\color{magenta}+}), as well as the midpoint trajectory ({\color{ForestGreen}--}). \label{fig:unicycle_all}}
\vspace{-0.15cm}
\end{figure}

\begin{figure}[t]
\begin{center}
\begin{subfigure}[b]{0.475\textwidth}
\includegraphics[scale=0.212,trim=39mm 0mm 50mm 0mm,clip]{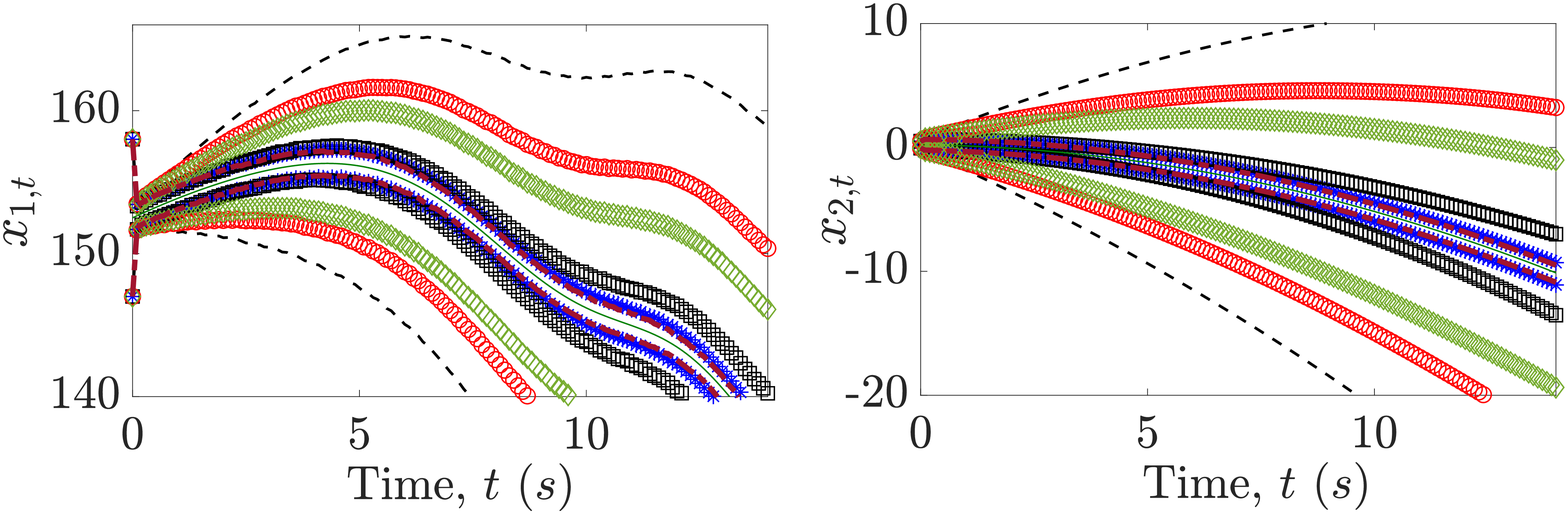}
\vspace{-0.1cm}
\caption{
GTM system without observations \eqref{eq:GTM}
\label{fig:GTLM_1}}
\end{subfigure}
\begin{subfigure}[b]{0.475\textwidth}
\includegraphics[scale=0.212,trim=39mm 0mm 50mm -12mm,clip]{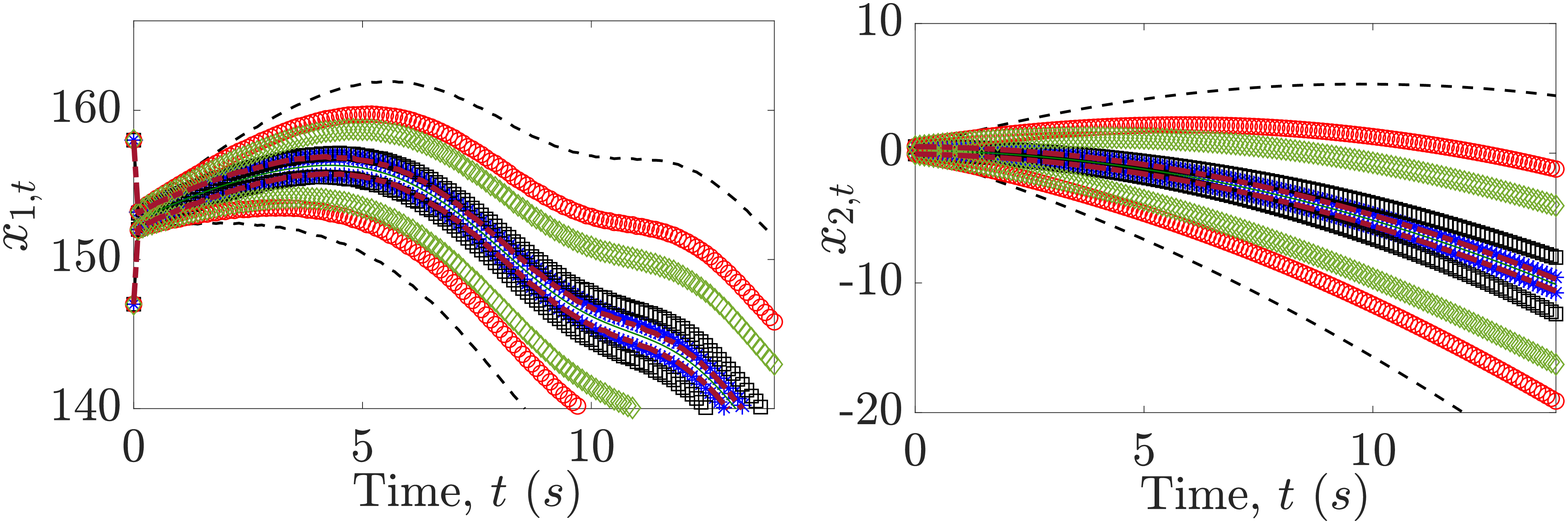}
\vspace{-0.1cm}
\caption{
GTM system with observations \eqref{eq:GTM}--\eqref{eq:GTM_obs}.
\label{fig:GTLM_2}}
\end{subfigure}
\end{center}
\caption{Upper and lower bounds on ${x}_{1}$ and $x_2$ in the GTM system (Example F), when applying $T^f_N (- -)$, $T^f_C$ ({\color{red}$\circ$}), $T^f_M$ ({\color{ForestGreen}$\diamond$}), $T^f_L$ ($\square$), $T^f_{R}$ ({\color{blue}$\ast$}) and the best of $T^f_N$--$T^f_{R}$ ({\color{purple}$\cdot$-}), as well as the midpoint trajectory ({\color{ForestGreen}--}). \label{fig:GTLM_all}}
\vspace{-0.15cm}
\end{figure}
\subsection{Generic Transport Longitudinal Model}
Finally, we consider NASA's Generic Transport Model (GTM) \cite{summers2013quantitative}, 
a remote-controlled 5.5\% scale commercial aircraft \cite{murch2007recent}, with the following main parameters:
wing area $ S= 5.902 \ \text{ft}^2$, mean aerodynamic chord $\overline{c}=0.9153$ ft,
mass $m= 1.542$ slugs, pitch axis moment of inertia $I_{yy}= 4.254 \ \text{slugs/ft}^2$, air density $\rho=0.002375 \ \text{slugs/ft}^3$ and gravitational acceleration $g=32.17 \ \text{ft/s}^2$. The longitudinal dynamics of the GTM can be described as the following continuous-time dynamical system:
\begin{align}\label{eq:GTM}
\begin{array}{ll}
\dot{V}_t&=\frac{-D_t-mg\sin(\theta_t-\alpha_t)+T_{x,t}\cos \alpha_t+T_{z,t}\sin \alpha_t}{m},\\
\dot{\alpha}_t&=q_t+\frac{-L_t+mg\cos(\theta_t-\alpha_t)-T_{x,t}\sin\alpha_t+T_{z,t}\cos\alpha_t}{mV_t},\\
\dot{q}_t&=\frac{M_t+T_{m,t}}{I_{yy}}, \ \dot{\theta}_t=q_t,
\end{array}
\end{align}
where $V_t$, $\alpha_t$, $q_t$ and $\theta_t$ are air speed (ft/s), angle of attack (rad), pitch rate (rad/s) and pitch angle (rad), respectively. Moreover, $T_{x,y}$ (lbs), $T_{z,t}$ (lbs), $T_{m,t}$ (lbs--ft), $D_t$ (lbs), $L_t$ (lbs) and $M_t$ (lbs) denote the projection of the total engine thrust along the body's \syo{$X$}-axis, the projection of the total engine thrust along the body's \syo{$Z$}-axis, the pitching moment due to both engines, the drag force, the lift force and the aerodynamic pitching moment, respectively, with their nominal values given in \cite{stevens2015aircraft}. Defining $x_t \triangleq [V_t \ \alpha_t \ q_t \ \theta_t]^\top$ with ${x}_0\in [147,158] \times [0.04,0.05] \times [0.1,0.2] \times [0.04,0.05]$, Figure \ref{fig:GTLM_1} depicts the reachable set approximations for $x_{1,t}$ and $x_{2,t}$ of the system \eqref{eq:GTM}. For this system\syo{,} $T^{f_d}_O$ is not computable, since the stationary/critical points of the vector fields cannot be obtained \syo{analytically,} 
\mo{a}nd as shown in Figure \ref{fig:GTLM_1}, $T^{f_d}_R$ obtains a tighter over-approximation than $T^f_N,T^f_C$, $T^f_M$ and $T^{f_d}_L$, with the best of $T^f_N$--$T^{f_d}_R$ showing further improvement.

Next, we consider an additional set of measurements, in the form of a linear observation equation: 
\begin{align}\label{eq:GTM_obs}
y_t=x_{1,t}+x_{2,t}-x_{3,t}+v_t,
\end{align} 
with $v_t\in[-0.01,0.01]$. 
Then applying $T^f_N$--$T^{f_d}_R$ along with the set inversion (update) approach in Algorithm \ref{algorithm2} to the constrained system \eqref{eq:GTM}--\eqref{eq:GTM_obs}, we observe considerably tighter approximations for all approaches in Figure \ref{fig:GTLM_2} (with observations) when compared to the approximations of the reachable sets in Figure \ref{fig:GTLM_1} (without observations).

\section{Conclusion And Future Work}

A tractable family of remainder-\syo{form} 
mixed-monotone decomposition functions was proposed in this paper for a relatively large class of nonsmooth systems called either-sided locally Lipschitz systems that is proven to include locally Lipschitz continuous systems. 
We also characterized the lower and upper bounds for the over-approximation errors  when using the proposed remainder-form decomposition functions to over-approximate the true range/image set of a nonsmooth, nonlinear mapping, where the lower bound is achieved by our proposed tight and tractable decomposition function, which was further proven to be tighter than the one introduced in \cite{yang2019sufficient}.
\syo{In addition,} a novel set inversion algorithm based on decomposition functions was developed to further refine/update  the reachabl\mo{e} sets when knowledge of state constraints and/or when measurements are available, which 
 can be applied for constrained reachability analysis and interval observer design for bounded-error continuous-time, discrete-time or sampled-data systems. 
Finally, the effectiveness of our proposed mixed-monotone decomposition functions is demonstrated using several benchmark examples in the literature.
 
Our future work will include the consideration of higher-order information about 
(generalized) derivatives \syo{of functions as well as a more principled way to design modeling redundancy} to \syo{further} improve the tightness of 
decomposition \syo{and inclusion} functions. 
\syo{Moreover,} we plan to extend our proposed interval over-approximation tools to perform reachability analysis with \emph{polytopic} sets, and  
to compute inner- and outer-approximations of reach-avoid \syo{and} 
(controlled) invariant sets. 

\bibliographystyle{IEEEtran}
\bibliography{IEEEabrv,autosam}
\appendices
\end{document}